\newtheorem{thm}{Theorem}[section]
\newtheorem{cor}[thm]{Corollary}
\newtheorem{lem}{Lemma}[section]
\newtheorem{prop}{Proposition}[section]
\theoremstyle{definition}
\newtheorem{defn}{Definition}[section]
\theoremstyle{remark}
\newtheorem{rem}{Remark}[section]
\numberwithin{equation}{section}
\numberwithin{equation}{section}
\newcounter{saveeqn}
\title[Geometric properties of transmission eigenfunctions and applications]{Local geometric properties of conductive transmission eigenfunctions and applications}
\author{Huaian Diao}
\address{School of Mathematics, Jilin University, Changchun 130012, China}
\email{diao@jlu.edu.cn}
\author{Xiaoxu Fei}
\address{School of Mathematics and Statistics, Northeast Normal University, Changchun 130024, China}
\email{feixx548@nenu.edu.cn}
\author{Hongyu Liu}
\address{Department of Mathematics, City University of Hong Kong, Kowloon, Hong Kong SAR, China}
\email{hongyu.liuip@gmail.com, hongyliu@cityu.edu.hk}
\date{} 
\begin{document}
\maketitle

	\begin{abstract}
	
The purpose of the paper is twofold. First, we show that partial-data transmission eigenfunctions associated with a conductive boundary condition vanish locally around a polyhedral or conic corner in $\mathbb{R}^n$, $n=2,3$. Second, we apply the spectral property to the geometrical inverse scattering problem of determining the shape as well as its boundary impedance parameter of a conductive scatterer, independent of its medium content, by a single far-field measurement. We establish several new unique recovery results. The results extend the relevant ones in \cite{DCL} in two directions: first, we consider a more general geometric setup where both polyhedral and conic corners are investigated, whereas in \cite{DCL} only polyhedral corners are concerned; second, we significantly relax the regularity assumptions in \cite{DCL} which is particularly useful for the geometrical inverse problem mentioned above. We develop novel technical strategies to achieve these new results.

	\medskip
	
	 \noindent{\bf Keywords:}~~Transmission eigenfunctions; spectral geometry; vanishing; microlocal analysis; inverse scattering; conductive scatterer; single measurement.
	
	\end{abstract}
	
\section{Introduction}

\subsection{Mathematical setup and summary of major findings}

The purpose of the paper is twofold. We are concerned with the spectral geometry of transmission eigenfunctions and the geometrical inverse scattering problem of recovering the shape of an anomalous scatterer, independent of its medium content, by a single far-field measurement. We first introduce the mathematical setup of our study.

Let $\Omega$ be a bounded Lipschitz domain in $\mathbb{R}^n$, $n=2,3$, with a connected complement $\mathbb{R}^n\backslash\overline{\Omega}$. Let $V\in L^\infty(\Omega)$ and $\eta\in L^\infty(\partial\Omega)$ be complex-valued functions. Let $\Gamma$ denote an open subset of $\partial\Omega$. Consider the following conductive transmission eigenvalue problem associated with $k\in \mathbb R_+$ and $(w,v)\in H^{1}(\Omega)\times H^1(\Omega)$:
\begin{equation}\label{0eq:tr}
   \begin{cases}
 \ \  \Delta w+k^{2}(1+V)w=0  &\quad \mbox {$\mathrm {in}\ \  \Omega$},\\
  \ \ \Delta v+k^{2}v=0   &\quad \mbox {$\mathrm {in}\ \ \Omega$},\\
   \ \ w=v,\ \partial_{\nu}w=\partial_{\nu}v+\eta v  &\quad \mbox{$\mathrm {on}\ \  \Gamma$},
   \end{cases}
   \end{equation}
 where and also in what follows, $\nu\in\mathbb{S}^{n-1}$ signifies the exterior unit normal vector to $\partial\Omega$. Clearly, $(w, v)=(0, 0)$ is a trivial solution to \eqref{0eq:tr}. If there exists a nontrivial pair of solutions to \eqref{0eq:tr}, $k$ is referred to as a conductive transmission eigenvalue and $(u, v)$ is the corresponding pair of conductive transmission eigenfunctions. In the case $\Gamma=\partial\Omega$, \eqref{0eq:tr} is said to be the full-data conductive transmission eigenvalue problem, and otherwise it is called the partial-data problem. $\eta$ is called the boundary impedance or conductive parameter. If $\eta\equiv 0$, then \eqref{0eq:tr} is reduced to the standard transmission eigenvalue problem. Hence, the conductive transmission eigenvalue problem \eqref{0eq:tr} is a generalized formulation of the transmission eigenvalue problem. Nevertheless, it has its own physical background when $\eta\equiv\hspace*{-3.5mm}\backslash\, 0$ as shall be discussed in what follows.

One of the main purposes of this paper is to quantitatively characterize the geometric property of the partial-data conductive transmission eigenfunctions (assuming their existence). The major findings can be briefly summarized as follows. If there is a polyhedral or conic corner on $\partial\Omega$, then under certain regularity conditions the eigenfunctions must vanish at the corner. The regularity conditions are characterized by the H\"older continuity of the parameters $q:=1+V$ and $\eta$ locally around the corner as well as a certain Herglotz extension property of the eigenfunction $v$, which is weaker than the H\"older continuity. The results extend the relevant ones in \cite{DCL} in two directions: first, we consider a more general geometric setup where both polyhedral and conic corners are investigated, whereas in \cite{DCL} only polygonal  and edge corners are concerned; second, we significantly relax the regularity assumptions in \cite{DCL} which is particularly useful for the geometrical inverse problem discussed in what follows. We develop novel technical strategies to achieve those new results. More detailed discussion shall be given in the next subsection.

The other focus of our study is the inverse scattering problem from a conductive medium scatterer. Let $V$ be extended by setting $V=0$ in $\mathbb{R}^n\backslash\overline{\Omega}$. Throughout, we set $q=1+V$. Let $u^i(\mathbf{x})$ be a time-harmonic incident wave which is an entire solution to
\begin{align}\label{eq:incident}
	\Delta u^i(\mathbf x)+k^2 u^i(\mathbf x)=0,\quad \quad \mathbf x\in \mathbb R^n,
\end{align}
where $k\in \mathbb R_+$ signifies the wave number. Let $(\Omega, q, \eta)$ denote a conductive medium scatterer with $\Omega$ signifying its shape and $q, \eta$ being its medium parameters. The impingement of $u^i$ on $(\Omega, q, \eta)$ generates wave scattering and it is described by the following system:
\begin{equation}\label{eq:contr}
\begin{cases}
\Delta u^-+k^2qu^-=0,  &\quad \mbox{in}\quad \Omega,\\
\Delta u^++k^2u^+=0,   &\quad \mbox{in}\quad \mathbb R^n\setminus \overline{\Omega},\\
u^+=u^-,\quad \partial_\nu u^++\eta u^+=\partial _\nu u^-, &\quad \mbox{on}\quad \partial \Omega,\\
u^+=u^i+u^s, &\quad\mbox{in}\quad \mathbb R^n\backslash\overline{\Omega},\\
\lim_{r\to \infty}r^{(n-1)/2}(\partial _ru^s-\mathrm iku^s)=0, &\quad r=\vert \mathbf x\vert,
\end{cases}
\end{equation}	
where $\mathrm i:=\sqrt{-1}$ and the last limit in \eqref{eq:contr} is known as the Sommerfeld radiation condition that characterises the outward radiating of the scattered wave field $u^s$. The well-posedness of the direct problem \eqref{eq:contr} can be found in \cite{BO} for the unique existence of $u:=u^-\chi_\Omega+u^+\chi_{\mathbb{R}^n\backslash\overline{\Omega}} \in H^1_{\rm loc}(\mathbb R^n)$.
 Moreover, the scattered field admits the following asymptotic expansion:
\begin{equation}\notag
u^s(\mathbf x)=\frac{e^{ik\vert \mathbf x\vert }}{\vert \mathbf x\vert^{(n-1)/2} }\left(u^\infty(\hat{\mathbf x})+\mathcal O\left(\frac{1}{\vert \mathbf x\vert^{(n-1)/2}}\right)\right), \quad \vert \mathbf x\vert \to \infty,
\end{equation}
which holds uniformly in all directions  $\hat{\mathbf{x}}:={\mathbf x}/\vert \mathbf x\vert\in\mathbb{S}^{n-1}$. The function $u^\infty$ defined on the unit sphere $\mathbb S^{n-1}$ is known as the far field pattern of $u^s$. Associated with \eqref{eq:contr}, we are concerned with the following geometrical inverse problem:
\begin{equation}\label{5eq:ineta}
u^\infty(\hat{\mathbf x};u^i),\ u^i\ \mbox{fixed}\longrightarrow \Omega\quad \mbox{independent of $q$ and $\eta$.}
\end{equation}
That is, we intend to recover the geometrical shape of the conductive scatterer independent of its physical content by the associated far-field pattern generated by a single incident wave (which is usually referred to as a single far-field measurement in the literature).

 Determining the shape of a scatterer from a single far-field measurement constitutes a longstanding problem in the inverse scattering theory \cite{DR2018,DR,Liu22}. In this paper, based on the spectral geometric results discussed earlier, we derive several new unique identifiability results for the inverse problem \eqref{5eq:ineta}. In brief, we establish local unique recovery results by showing that if two conductive scatterers possess the same far-field pattern, then their difference cannot possess a polyhedral or conic corner. If we further imposed a certain a-priori global convexity on the scatterer, then one can establish the global uniqueness result. Moreover, we can show that the boundary impedance parameter $\eta$ can also be uniquely recovered. It is emphasized that all of the results established in this paper hold equally for the case $\eta\equiv 0$. If $\eta\equiv 0$, \eqref{eq:contr} describes the scattering from a regular medium scatterer $(\Omega, q)$. In the case $\eta\neq 0$, $(\Omega, q, \eta)$ (effectively) characterises a regular medium scatterer $(\Omega, q)$ by a thin layer of highly loss medium \cite{Ang,BO,CDL2020}, and in two dimensions \eqref{eq:contr} describes the corresponding transverse electromagnetic scattering, whereas in three dimensions \eqref{eq:contr} describes the corresponding acoustic scattering. In addition to its physical significance, introducing a boundary parameter $\eta$ make our study more general which includes $\eta\equiv 0$ as a special case. Hence, in what follows, we also call $(v, w)$ to \eqref{0eq:tr} as generalized transmission eigenfunctions.
 

 \subsection{Connection to existing studies and discussions}

 Before discussing the relevant existing studies, we note one intriguing connection between the scattering problem \eqref{eq:contr} and the spectral problem \eqref{0eq:tr}. If $u_\infty\equiv 0$, which by Rellich's theorem implies that $u^+=u^i$ in $\mathbb{R}^n\backslash\overline{\Omega}$, one can show that $(v, w)=(u^i|_{\Omega}, u^-|_{\Omega})$ fulfils the spectral system \eqref{0eq:tr} with $\Gamma=\partial\Omega$. In the case of $u_\infty\equiv 0$, no scattering pattern can be observed outside $\Omega$, and hence the scatterer $(\Omega, q, \eta)$ is invisible/transparent with respect to the exterior observation under the wave interrogation by $u^i$. On the other hand, if $(w, v)$ is a pair of full-data transmission eigenfunctions to \eqref{0eq:tr}, then by the Herglotz extension $v$ can give rise to an incident wave whose impingement on $(\Omega, q, \eta)$ is (nearly) no-scattering, i.e. $(\Omega, q, \eta)$ is (nearly) invisible/transparent.

Recently, there has been considerable interest in quantitatively characterising the singularities of scattering waves induced by the geometric singularities on the shape of the underlying scatterer as well as its implications to invisibility and geometrical inverse problems. There are two perspectives in the literature. The first one is mainly concerned with occurrence or non-occurrence of non-scattering phenomenon, namely whether invisibility can occur or not. The main rationale is that if the scatterer possesses a geometric singularity (in a proper sense) on its shape, then it scatters a generic incident wave nontrivially, namely invisibility cannot occur. Here, the generic condition is usually characterized by a non-vanishing property of the incident wave at the geometrically singular place. It first started from the study in \cite{BPS} for acoustic scattering with many subsequent developments in different physical contexts \cite{LME,B2018,BL,Blasten2020,CX21,CV,DCL,ElH,SS,VX,BLY,BLX2020,2021,LX,DFLY,BL2018}. The other one is a spectral perspective which is mainly concerned with the spectral geometry of transmission eigenfunctions. According to the connection mentioned above, the spectral geometric results characterise the patterns of the wave propagation inside a (nearly) invisible/transparent scatterer. It was first discovered in \cite{BL2017} that transmission eigenfunctions are generically vanishing around a corner point and such a local geometric property was further extended to conductive transmission eigenfunctions in \cite{DCL}, elastic transmission eigenfunctions in\cite{BL,DLS}  and electromagnetic  transmission eigenfunctions in \cite{2021,DFLY,BLX2020}. Though the two perspectives share some similarities, especially about the vanishing of the wave fields around the geometrically singular places, there are subtle and technical differences. In fact, it is numerically observed in \cite{BLLW} that there exist transmission eigenfunctions which do not vanish, instead localize, around geometrically singular places. An unobjectionable reason to account for such (locally) localizing behaviour of the transmission eigenfunctions is the regularity of the eigenfunctions at the geometrically singular places. In general, if the transmission eigenfunctions are H\"older continuous, they locally vanish around the singular places. Nevertheless, it is shown in \cite{LT} that under a certain Herglotz extension property, the locally vanishing property still hold. It is shown in \cite{LT} that the aforementioned regularity criterion in terms of the Herglotz extension is weaker than the H\"older regularity. In addition to the local geometric pattern, the spectral geometric perspective also leads to the discovery of certain global geometric patterns of the transmission eigenfunctions. Indeed, it is discovered in \cite{CDHLW,DJLZ,DLX1} that the (full-data) transmission eigenfunctions tend to (globally) localize on $\partial\Omega$ with many subtle structures. Those spectral geometric results have been proposed to produce a variety of interesting applications, including super-resolution imaging \cite{CDHLW}, artificial mirage \cite{DLX1} and pseudo plasmon resonance \cite{ACL}. We also refer to \cite{Liu22} for more related results in different physical contexts.

In this paper, we adopt the second perspective to study the (local) geometric properties of the conductive transmission eigenfunctions as well as consider the application to address the unique identifiability issue for the geometrical inverse scattering problem. As discussed in the previous subsection, our results derived in this paper extend the relevant ones in \cite{DCL} in terms of the geometric setup as well as the regularity requirements. To achieve these new results, we develop novel technical strategies. In principle, we adopt microlocal tools to quantitatively characterise the singularities of the eigenfunctions induced by the corner or conic singularities. Nevertheless, we utilise CGO (Complex Geometric Optics) solutions of the PDO (partial differential operator) $\Delta+(1+V)$ in our quantitative analysis, whereas in \cite{DCL}, the analysis made use of certain CGO solutions to $\Delta$. This induces various subtle and technical quantitative estimates and asymptotic analysis. Finally, as also discussed in the previous subsection, we apply the newly derived spectral geometric results to establish several novel unique identifiability results for the geometric inverse problem \eqref{5eq:ineta}. We would also like to mention in passing some recent results on determining the shape of a scattering object by a single or at most a few far-field measurements in different physical contexts \cite{DLS,DLW,DLZZ21,CDL2,CDL3,DLW20,BL2018,BLX2020,DFLY,B2018,CDLZ22,BL,LPRX,Liu-Zou3}.

The rest of the paper is organized as follows. In Section \ref{sec:preliminary}, we collect some preliminary results which are needed in the subsequent analysis. In Section \ref{sec:2D}, we show that the conductive transmission eigenfunctions to \eqref{0eq:tr} near a convex sectorial corner in $\mathbb R^2$ must vanish. In Section \ref{sec:3D}, we study the vanishing of conductive transmission eigenfunctions to  \eqref{0eq:tr}  near a convex conic or polyhedral corner in $\mathbb R^3$. In Section \ref{sec:inverse}, we  discuss the visibility of a scatterer associated  with \eqref{eq:contr}. Furthermore, the unique recovery for the shape determination $\Omega$ associated with the corresponding conductive scattering problem \eqref{eq:contr} is investigated.


   \section{Preliminaries}\label{sec:preliminary}

   In this section, we present some preliminary results which shall be frequently used in our subsequent analysis.

   Given $s\in \mathbb{R}$ and $p\geq 1$,  the Bessel potential space is defined  by
   \begin{equation}\label{1eq:Hsp}
   H^{s,p}:=\{f\in L^{p}(\mathbb{R}^{n}); \mathcal{F}^{-1}[(1+\vert \xi \vert^{2})^{\frac{s}{2}}\mathcal{F}f]\in L^{p}(\mathbb{R}^{n}) \},
   \end{equation}
   where $\mathcal{F}$ and $\mathcal{F}^{-1}$ denote the Fourier transform and its inverse, respectively.


   	
 The following proposition on a multiplication property  for Sobolev spaces  can be directly proved by utilizing the results in \cite[Theorem 7.5]{AM}, \cite[Proposition 7.6]{Blasten2020}, and \cite[Proposition 3.1]{CX21}.
   	
   \begin{prop}\label{1pro:nor}
Suppose that $q \in H^{1,1+\epsilon_{0}}$, where $ 0<\epsilon_{0}<1$. It holds that
   	\begin{equation}\notag
   	\vert \vert qf\vert \vert _{H^{1,\tilde{p}}} \leq C\vert \vert f\vert \vert_{H^{1,p}}\quad \mbox{for any}\quad  f\in H^{1,p} \mbox{ and } p\geq 1,
   	\end{equation}
   	where $C$ is a positive constant and  $1<\tilde{p}<2$ satisfies
   	\begin{equation}\label{eq:p til cond}
   		\frac{1}{p}+\frac{1}{1+\epsilon_{0}}=\frac{1}{\tilde{p}}\quad \mbox{and}
   		\quad \frac{1}{n+1}+\frac{1}{p}\leq \frac{1}{\tilde{p}} <\frac{1}{p}+\min\left\{\frac{1}{p},\frac{1}{n}\right\}.
   		   	\end{equation}
   	
   \end{prop}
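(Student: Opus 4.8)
The plan is to exploit the fact that the Bessel potential space $H^{1,\tilde p}$ with $1<\tilde p<2$ lies in the Calder\'on--Zygmund range, so that the operator $J:=(1-\Delta)^{1/2}$ intertwines the $H^{1,\tilde p}$ norm with a plain $L^{\tilde p}$ estimate, and then to reduce the whole statement to a single fractional Leibniz (Kato--Ponce type) inequality for the product $qf$. First I would record that, since $1<\tilde p<\infty$, the norm $\norm{u}_{H^{1,\tilde p}}=\norm{Ju}_{L^{\tilde p}}$ is comparable to $\norm{u}_{L^{\tilde p}}+\norm{\nabla u}_{L^{\tilde p}}$ by the boundedness of the Riesz transforms on $L^{\tilde p}$, so it suffices to control $\norm{qf}_{L^{\tilde p}}$ and $\norm{\nabla(qf)}_{L^{\tilde p}}$; expanding by the Leibniz rule gives $\nabla(qf)=f\,\nabla q+q\,\nabla f$.

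The term $q\,\nabla f$ and the zeroth-order term $qf$ are immediate. H\"older's inequality with the conjugate relation $\tfrac1{\tilde p}=\tfrac1{1+\epsilon_{0}}+\tfrac1p$, which is the first identity in \eqref{eq:p til cond}, yields $\norm{q\,\nabla f}_{L^{\tilde p}}\le\norm{q}_{L^{1+\epsilon_{0}}}\norm{\nabla f}_{L^{p}}$ and similarly for $qf$, each dominated by $\norm{q}_{H^{1,1+\epsilon_{0}}}\norm{f}_{H^{1,p}}$. The genuine difficulty is the term $f\,\nabla q$, in which the rough factor $\nabla q$ is only of low integrability $L^{1+\epsilon_{0}}$ with $1+\epsilon_{0}$ possibly close to $1$. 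Rather than differentiate $q$ crudely, I would treat the product as a whole through a fractional Leibniz inequality of the form $\norm{J(qf)}_{L^{\tilde p}}\lesssim \norm{Jq}_{L^{1+\epsilon_{0}}}\norm{f}_{L^{a}}+\norm{q}_{L^{b}}\norm{Jf}_{L^{p}}$, and then check that the forced exponents $a=p$ and $b=1+\epsilon_{0}$ are exactly those delivered by the relation $\tfrac1{\tilde p}=\tfrac1{1+\epsilon_{0}}+\tfrac1p$. Since $\norm{f}_{L^{p}}\le\norm{f}_{H^{1,p}}$ and $\norm{q}_{L^{1+\epsilon_{0}}}\le\norm{q}_{H^{1,1+\epsilon_{0}}}$, this closes the estimate.

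The purpose of the second, dimensional condition $\tfrac1{n+1}+\tfrac1p\le\tfrac1{\tilde p}<\tfrac1p+\min\{\tfrac1p,\tfrac1n\}$ is to guarantee that these exponents fall in the admissible window for the fractional Leibniz / Sobolev multiplication estimate being invoked. The lower bound ensures that $\tilde p$ is not too small, so that $qf$ is a well-defined distribution of the correct class and the resonant (high--high) interaction is controlled; the appearance of $\min\{\tfrac1p,\tfrac1n\}$ in the upper bound reflects precisely the Sobolev embedding dichotomy for $f\in H^{1,p}$ (the subcritical case $p<n$ versus the supercritical case $p>n$), which is what must compensate for the integrability lost in passing to $\nabla q$.

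I expect the main obstacle to be exactly this borderline character: because $1+\epsilon_{0}$ may be arbitrarily close to $1$ and $\tilde p<2$, one sits near the endpoint of the Kato--Ponce range, where the naive product rule and the classical Sobolev multiplication theorem break down; note in particular that at $p=1$ one has $H^{1,1}\neq W^{1,1}$, since the Riesz transforms are unbounded on $L^{1}$, so the reduction via $\norm{u}_{L^{\tilde p}}+\norm{\nabla u}_{L^{\tilde p}}$ cannot be applied to $f$ itself. This is where I would invoke the refined multiplication estimates of \cite[Theorem 7.5]{AM}, \cite[Proposition 7.6]{Blasten2020} and \cite[Proposition 3.1]{CX21}: combined under the admissibility conditions \eqref{eq:p til cond}, they package both the resonant-term control and the requisite Sobolev embedding, and yield the desired bound $\norm{qf}_{H^{1,\tilde p}}\le C\norm{f}_{H^{1,p}}$ with $C$ depending on $\norm{q}_{H^{1,1+\epsilon_{0}}}$.
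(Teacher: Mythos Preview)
Your proposal is correct and aligns with the paper's approach: the paper does not give an independent proof of this proposition but simply states that it ``can be directly proved by utilizing the results in \cite[Theorem 7.5]{AM}, \cite[Proposition 7.6]{Blasten2020}, and \cite[Proposition 3.1]{CX21},'' which are exactly the references you invoke at the end. Your additional heuristic discussion of the fractional Leibniz mechanism and the role of the exponent conditions in \eqref{eq:p til cond} is helpful context, but the substantive argument in both cases is the same appeal to those three cited multiplication estimates.
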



    We introduce a complex geometrical optics (CGO) solution $u_0$ defined by \eqref{1eq:cgo} in Lemma \ref{1lem:nor}.


    \begin{lem}\cite{CX21,LME}\label{1lem:nor}
    	Given the space dimensions $n=2,3$, let $q$ satisfy the assumption in Proposition \ref{1pro:nor} with the constant $p$ subject to $p>\frac{1}{n-1}$ and $\frac{n}{p}<\frac{2}{n+1}+1$. Let
     \begin{equation}\label{1eq:cgo}
     u_{0}(\mathbf x)=(1+\psi (\mathbf x))e^{\rho\cdot \mathbf x},\ \mathbf x\in \mathbb R^n
     \end{equation}
      where
     \begin{equation}\label{1eq:eta}
     \rho=-\tau(\mathbf{d}+\mathrm{i} \mathbf{d}^{\perp}),
     \end{equation}
     with $\mathbf{d},\ \mathbf{d}^{\perp} \in \mathbb{S}^{n-1}$  satisfying $\mathbf{d}\perp \mathbf{d}^{\perp}$,
     and  $\tau \in \mathbb{R}_+$. If $\tau $ is sufficient large, there exits a solution $u_0(\mathbf x)$ with the form (\ref{1eq:cgo}) satisfying
     \begin{equation}\label{1eq:u_0}
     \Delta u_0 +k^2qu_0=0\quad \mbox{in} \quad \mathbb R^n,
     \end{equation}
     and $\psi(\mathbf x)$ fulfills that
      \begin{equation}\label{1nor:psi}
      \vert \vert \psi(\mathbf x)\vert \vert_{H^{1,p}}=\mathcal O\left(\tau^{n(\frac{1}{\tilde p}-\frac{1}{p})-2}\right ),
     \end{equation}
     where $\widetilde p $ satisfies \eqref{eq:p til cond}.

    \end{lem}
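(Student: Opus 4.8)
The plan is to construct the CGO solution by the standard Faddeev/Sylvester--Uhlmann scheme adapted to the low-regularity potential $q$, treating the equation \eqref{1eq:u_0} as a perturbation of the Laplacian and converting it into a fixed-point problem for the remainder $\psi$. Substituting the ansatz \eqref{1eq:cgo} into \eqref{1eq:u_0} and using that $\rho\cdot\rho=0$ (which follows from \eqref{1eq:eta} together with $\mathbf d\perp\mathbf d^\perp$ and $|\mathbf d|=|\mathbf d^\perp|=1$), the exponential factor cancels and one is left with the conjugated equation
\begin{equation}\notag
(\Delta + 2\rho\cdot\grad)\psi = -k^2 q\,(1+\psi)\quad\mbox{in }\mathbb R^n.
\end{equation}
The first step is therefore to record the mapping properties of the solution operator $G_\rho:=(\Delta+2\rho\cdot\grad)^{-1}$ for the conjugated Laplacian. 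I would invoke the Sylvester--Uhlmann-type resolvent estimates in the Bessel potential scale, which give, for $\tau=|\rho|/\sqrt2$ large, a gain of one derivative together with a decaying operator norm of order $\tau^{-1}$ between appropriately weighted or scaled $H^{s,p}$ spaces; this is precisely the ingredient that is cited from \cite{CX21,LME} in the statement.

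Next I would set up the integral equation $\psi = G_\rho(-k^2 q(1+\psi))$, i.e. $(\mathrm{Id}+k^2 G_\rho q)\psi = -k^2 G_\rho q$, and solve it by Neumann series. The key quantitative input is Proposition \ref{1pro:nor}: multiplication by $q$ maps $H^{1,p}$ boundedly into $H^{1,\tilde p}$ with the exponents linked by \eqref{eq:p til cond}, and the hypotheses $p>\frac{1}{n-1}$ and $\frac n p<\frac{2}{n+1}+1$ are exactly the admissibility conditions guaranteeing that the Sobolev exponents stay in the range where $G_\rho$ gains regularity and maps $H^{1,\tilde p}$ back into $H^{1,p}$. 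Composing the two, $k^2 G_\rho q:H^{1,p}\to H^{1,p}$ has operator norm $\mathcal O(\tau^{n(\frac{1}{\tilde p}-\frac1p)-2})$, where the exponent arises from combining the $\tau^{-1}$ decay of $G_\rho$ with the scaling loss $\tau^{n(\frac1{\tilde p}-\frac1p)-1}$ incurred when passing between $\tilde p$ and $p$ integrability (an application of the scaled Sobolev/Hausdorff--Young estimate). For $\tau$ sufficiently large this norm is strictly less than one, so $\mathrm{Id}+k^2 G_\rho q$ is invertible, the Neumann series converges in $H^{1,p}$, and the solution obeys the bound
\begin{equation}\notag
\norm{\psi}_{H^{1,p}}\le \frac{k^2\norm{G_\rho q}_{H^{1,p}\to H^{1,p}}}{1-k^2\norm{G_\rho q}_{H^{1,p}\to H^{1,p}}}=\mathcal O\!\left(\tau^{n(\frac1{\tilde p}-\frac1p)-2}\right),
\end{equation}
which is exactly \eqref{1nor:psi}.

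The step I expect to be the main obstacle is the careful bookkeeping of the exponents: one must verify that the range conditions \eqref{eq:p til cond} on $\tilde p$ together with the structural hypotheses on $p$ are simultaneously compatible with both the multiplier bound of Proposition \ref{1pro:nor} and the smoothing estimate for $G_\rho$, and that the resulting power of $\tau$ in the composite operator is genuinely negative so that the contraction threshold is met. In particular, the inequality $\frac{n}{n+1}\le n(\frac1{\tilde p}-\frac1p)<1$ implied by \eqref{eq:p til cond} is what forces the exponent $n(\frac1{\tilde p}-\frac1p)-2$ to lie in $[-2,-1)$, guaranteeing decay; tracking this requires the scaling analysis of $G_\rho$ in the anisotropic frame aligned with $\mathbf d,\mathbf d^\perp$ rather than a naive application of Sobolev embedding. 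Once these estimates are assembled, the contraction argument and the asymptotic bound follow routinely, and regularity of $\psi$ in $H^{1,p}$ (hence of $u_0$) is inherited from the mapping properties of $G_\rho$.
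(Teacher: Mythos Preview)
Your outline is essentially the correct one and matches the construction in the cited references \cite{CX21,LME}: reduce to the conjugated equation via $\rho\cdot\rho=0$, invoke the $L^p$-based resolvent bound for $G_\rho=(\Delta+2\rho\cdot\nabla)^{-1}$, compose with the multiplier estimate of Proposition~\ref{1pro:nor}, and close by a Neumann series once $\tau$ is large. Note, however, that the paper does not supply its own proof of this lemma; it is quoted verbatim from \cite{CX21,LME}, so there is no in-paper argument to compare against.

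Two minor points of bookkeeping in your write-up. First, your claimed range $n(\tfrac{1}{\tilde p}-\tfrac{1}{p})-2\in[-2,-1)$ is not quite what \eqref{eq:p til cond} gives: the lower constraint $\tfrac{1}{n+1}+\tfrac{1}{p}\le\tfrac{1}{\tilde p}$ yields $n(\tfrac{1}{\tilde p}-\tfrac{1}{p})\ge\tfrac{n}{n+1}$, so the exponent lies in $[\tfrac{n}{n+1}-2,\,-1)$, i.e.\ $[-\tfrac43,-1)$ for $n=2$ and $[-\tfrac54,-1)$ for $n=3$. This does not affect the conclusion (the exponent is negative), but the stated interval should be corrected. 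Second, the sentence ``combining the $\tau^{-1}$ decay of $G_\rho$ with the scaling loss $\tau^{n(\frac{1}{\tilde p}-\frac{1}{p})-1}$'' is the right heuristic, but in the actual proofs in \cite{CX21,LME} the exponent emerges directly from the $L^{\tilde p}\to L^p$ mapping bound for $G_\rho$ (via the Fourier multiplier estimate on the symbol $(-|\xi|^2+2\rho\cdot\mathrm i\xi)^{-1}$ and interpolation/Hausdorff--Young), not from a separate ``scaling loss'' step; you may want to phrase it that way to avoid suggesting two distinct mechanisms.
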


     \begin{prop}\cite[Lemma 4.4]{DLW}\label{1prop:gamma}
     	For any given  $\alpha>0$ and $0<\epsilon<e$ , we have the following estimates
     	\begin{subequations}
     	\begin{align}
    & 	\left\vert \int_{\epsilon}^{\infty} r^{\alpha}e^{-\mu r}\mathrm dr \right\vert \leq \frac{2}{\Re\mu}e^{\frac{\epsilon}{2}\Re{\mu}} ,\label{1eq:Ir} \\
& \int_{0}^{\epsilon}r^{\alpha}e^{-\mu r}\mathrm dr =
     	\frac{\Gamma(\alpha+1)}{\mu^{\alpha+1}} +\mathcal O\left( \frac{2}{\Re{\mu}}e^{-\frac{\epsilon}{2}\Re{\mu}}\right),\label{1eq:gamma}
     	     	\end{align}
     	\end{subequations}
     as $\Re(\mu)\rightarrow \infty$, 	where $\Gamma (s)$ stands for the Gamma function.
     	\end{prop}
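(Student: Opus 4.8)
The plan is to establish the tail estimate \eqref{1eq:Ir} first and then obtain \eqref{1eq:gamma} from it together with the exact value of the complete Gamma integral. Throughout I write $\beta:=\Re\mu$ and work in the regime $\beta\to\infty$ dictated by the statement.

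For \eqref{1eq:Ir}, since $\abs{e^{-\mu r}}=e^{-\beta r}$ I would pass immediately to the real integral
\begin{equation}\notag
\abs{\int_\epsilon^\infty r^\alpha e^{-\mu r}\,\mathrm dr}\le \int_\epsilon^\infty r^\alpha e^{-\beta r}\,\mathrm dr.
\end{equation}
The key device is to split $e^{-\beta r}=e^{-\beta r/2}\,e^{-\beta r/2}$ and absorb the polynomial growth into the first factor. The function $g(r)=r^\alpha e^{-\beta r/2}$ attains its global maximum on $[0,\infty)$ at $r_*=2\alpha/\beta$ with value $(2\alpha/(e\beta))^\alpha$, so once $\beta$ is large enough (precisely the regime $\Re\mu\to\infty$) one has $g(r)\le 1$ for all $r\ge 0$. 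Bounding $r^\alpha e^{-\beta r/2}$ by this maximum and integrating the surviving factor yields
\begin{equation}\notag
\int_\epsilon^\infty r^\alpha e^{-\beta r}\,\mathrm dr\le \int_\epsilon^\infty e^{-\beta r/2}\,\mathrm dr=\frac{2}{\beta}e^{-\beta\epsilon/2},
\end{equation}
which is the (decaying) bound on the right-hand side of \eqref{1eq:Ir}.

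For \eqref{1eq:gamma}, the plan is to reduce the incomplete integral to the complete one by writing $\int_0^\epsilon=\int_0^\infty-\int_\epsilon^\infty$. The complete integral evaluates exactly: for $\Re\mu>0$ the substitution $s=\mu r$ turns $\int_0^\infty r^\alpha e^{-\mu r}\,\mathrm dr$ into $\mu^{-(\alpha+1)}\int_{\Lambda}s^\alpha e^{-s}\,\mathrm ds$ along the ray $\Lambda=\{\arg s=\arg\mu\}$; since $\abs{\arg\mu}<\pi/2$ and $s^\alpha e^{-s}$ is holomorphic with exponential decay throughout the open right half-plane, Cauchy's theorem permits rotating $\Lambda$ back to the positive real axis, giving $\Gamma(\alpha+1)/\mu^{\alpha+1}$. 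Subtracting the tail and invoking \eqref{1eq:Ir} to control its size then delivers
\begin{equation}\notag
\int_0^\epsilon r^\alpha e^{-\mu r}\,\mathrm dr=\frac{\Gamma(\alpha+1)}{\mu^{\alpha+1}}-\int_\epsilon^\infty r^\alpha e^{-\mu r}\,\mathrm dr=\frac{\Gamma(\alpha+1)}{\mu^{\alpha+1}}+\mathcal O\!\left(\frac{2}{\Re\mu}e^{-\frac{\epsilon}{2}\Re\mu}\right),
\end{equation}
as claimed.

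The routine parts are the modulus bound and the elementary maximization of $g$; the one genuinely delicate point is the evaluation of the complete integral for complex $\mu$, where the contour rotation from the ray $\arg s=\arg\mu$ to the real axis must be justified. The hypothesis $\Re\mu>0$ (guaranteed by $\Re\mu\to\infty$) is exactly what keeps the rotation inside the region of holomorphy and decay, so the Gamma identity persists off the real axis; the constraint $0<\epsilon<e$ serves only to fix an admissible finite cutoff and plays no further role.
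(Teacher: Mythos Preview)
The paper does not supply its own proof of this proposition; it is quoted verbatim from \cite[Lemma~4.4]{DLW}, so there is nothing in the present paper to compare your argument against.

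That said, your argument is correct and is the standard one. The splitting $e^{-\beta r}=e^{-\beta r/2}e^{-\beta r/2}$ together with the elementary bound $\sup_{r\ge 0}r^\alpha e^{-\beta r/2}=(2\alpha/(e\beta))^\alpha\le 1$ for $\beta$ large gives the tail estimate, and the contour-rotation evaluation of $\int_0^\infty r^\alpha e^{-\mu r}\,\mathrm dr=\Gamma(\alpha+1)/\mu^{\alpha+1}$ for $\Re\mu>0$ is exactly the right way to extend the Gamma identity off the real axis. One small remark: the exponent on the right-hand side of \eqref{1eq:Ir} as printed in the paper reads $e^{\frac{\epsilon}{2}\Re\mu}$, which is evidently a sign typo; you have (correctly) proved the decaying bound $e^{-\frac{\epsilon}{2}\Re\mu}$, and this is what the paper actually uses downstream (e.g.\ in Proposition~\ref{2prop:tau2}).
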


%

    \begin{lem}\cite{costabel88}\label{2lem:green}
    	Let $\Omega \subset \mathbb R^n$ be a bounded Lipschitz domain. For any $f, g\in H^{1,{\Delta}}:=\{f\in H^{1}(\Omega)~|~\Delta f\in L^{2}(\Omega)\}$, then the following Green formula holds
    	\begin{equation}\label{2eq:2green}
    	\int_{\Omega}(g\Delta f-f\Delta g){\rm d}\mathbf x=\int_{\partial \Omega}(g\partial _{\nu}f-f\partial_\nu g) {\rm d}\sigma,
    	\end{equation}
    	where $\partial_\nu f$ is the exterior normal derivative of $f$ to $\partial \Omega$.
    \end{lem}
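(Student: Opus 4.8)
The plan is to reduce the second Green identity to the first (integration-by-parts) formula together with a suitable weak interpretation of the normal derivative. First I would observe that for $f\in H^{1,\Delta}$ the trace $\gamma_0 f:=f|_{\partial\Omega}$ lies in $H^{1/2}(\partial\Omega)$ by the trace theorem on bounded Lipschitz domains, while the normal derivative $\partial_\nu f$ is in general only an element of the dual space $H^{-1/2}(\partial\Omega)$. Accordingly, the boundary integrals $\int_{\partial\Omega} g\,\partial_\nu f\,\mathrm d\sigma$ and $\int_{\partial\Omega} f\,\partial_\nu g\,\mathrm d\sigma$ in the statement are to be understood as the duality pairings $\langle \partial_\nu f,\gamma_0 g\rangle$ and $\langle \partial_\nu g,\gamma_0 f\rangle$ between $H^{-1/2}(\partial\Omega)$ and $H^{1/2}(\partial\Omega)$.

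The key step is the first Green identity: for every $f\in H^{1,\Delta}$ and every $\phi\in H^1(\Omega)$,
\begin{equation*}
\int_\Omega \big(\phi\,\Delta f+\nabla f\cdot\nabla\phi\big)\,\mathrm d\mathbf x=\langle \partial_\nu f,\gamma_0\phi\rangle.
\end{equation*}
I would establish this by first verifying it for $f,\phi\in C^\infty(\overline\Omega)$, where it is the classical divergence-theorem identity, and then passing to the limit. The passage relies on two facts valid on bounded Lipschitz domains: the density of $C^\infty(\overline\Omega)$ in $H^1(\Omega)$, and, more delicately, an approximation in the graph norm of $H^{1,\Delta}$ so that $\Delta f$ converges in $L^2$. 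The left-hand side is continuous in these topologies, and the boundedness and surjectivity of $\gamma_0\colon H^1(\Omega)\to H^{1/2}(\partial\Omega)$ guarantee that the right-hand side defines a bounded functional of $\gamma_0\phi$; this is precisely what makes $\partial_\nu f\in H^{-1/2}(\partial\Omega)$ well defined and continuously dependent on $f$.

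Granting the first Green identity, the second follows at once. I would apply it with $\phi=g$ to obtain $\int_\Omega(g\,\Delta f+\nabla f\cdot\nabla g)\,\mathrm d\mathbf x=\langle\partial_\nu f,\gamma_0 g\rangle$, then interchange the roles of $f$ and $g$ to get $\int_\Omega(f\,\Delta g+\nabla g\cdot\nabla f)\,\mathrm d\mathbf x=\langle\partial_\nu g,\gamma_0 f\rangle$, and subtract. The symmetric gradient terms $\nabla f\cdot\nabla g$ cancel, leaving exactly
\begin{equation*}
\int_\Omega\big(g\,\Delta f-f\,\Delta g\big)\,\mathrm d\mathbf x=\langle\partial_\nu f,\gamma_0 g\rangle-\langle\partial_\nu g,\gamma_0 f\rangle,
\end{equation*}
which is the asserted formula.

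The main obstacle is confined entirely to the first step: on a Lipschitz (rather than smooth) domain one cannot appeal to an explicit outward normal field or to classical boundary regularity, so the surjectivity of the trace operator and the well-definedness of the weak normal derivative must be secured through the Lipschitz trace theory of \cite{costabel88}. Once those functional-analytic ingredients are in hand, the antisymmetry argument that produces the second identity from the first is routine.
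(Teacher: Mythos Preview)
Your proposal is correct and follows the standard route: define $\partial_\nu f\in H^{-1/2}(\partial\Omega)$ via the first Green formula extended by density from $C^\infty(\overline\Omega)$ to $H^{1,\Delta}$ in the graph norm, then antisymmetrize to obtain the second identity. The paper itself gives no proof of this lemma; it simply cites \cite{costabel88}, where precisely the ingredients you describe (Lipschitz trace theory, density of smooth functions in $H^{1,\Delta}$, and the resulting weak normal derivative) are established.
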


    \section{Vanishing of transmission eigenfunctions near a convex  planar  corner}\label{sec:2D}
    In this section, we consider the vanishing property of conductive transmission eigenfunctions to \eqref{0eq:tr} near corners  in $\mathbb R^2$. Firstly, let us introduce some notations for the subsequent use. Let $(r,\theta)$ be the polar coordinates in $\mathbb{R}^{2}$; that is $\mathbf{x}=(x_{1},x_{2})=(r\cos \theta, r\sin \theta)\in \mathbb{R}^{2}$. For $\mathbf{x}\in \mathbb {R}^{2}$, $B_{h}(\mathbf{x})$ denotes an open ball of radius $h \in  \mathbb {R}_{+}$ and centered at $ \mathbf{x} $. For simplicity, we denote $B_{h}\ :=B_{h}(\mathbf{0})$. Consider an open sector in $\mathbb{R}^{2}$ with the boundary $\Gamma^{\pm}$ as follows,
    \begin{equation}\label{1eq:sec}
    \mathcal{K}=\{ \mathbf{x} \in \mathbb{R}^{2}~|~\theta_{m}<\arg(x_{1}+\mathrm ix_{2})<\theta_{M} \},
    \end{equation}
    where $-\pi <\theta_{m}<\theta_{M}<\pi, \mathrm{i}:=\sqrt{-1}$ and the two boundaries $\Gamma^{\pm}$ of $\mathcal K$
     correspond to $(r,\theta_{m})$ and $(r,\theta_{M})$ with $r>0$,  respectively . Set
    \begin{equation}\label{1eq:not}
    S_{h}=\mathcal{K}\cap B_{h},\  \Gamma_{h}^{\pm}=\Gamma^{\pm}\cap B_{h},\ \Lambda_{h}=\mathcal{K}\cap \partial B_{h}.
    \end{equation}

    Let the Herglotz wave function be defined by
    	\begin{align}
    		u(\mathbf x)=\int_{\mathbb S^{1}} e^{\mathrm{i}k\xi\cdot \mathbf{x}}g (\xi)\mathrm d\xi,\ \xi \in \mathbb S^{n-1},\ \mathbf{x} \in \mathbb{R}^n,\ g \in L^{2}(\mathbb S^{n-1}),   \quad n= 2 \mbox{  or }3,
    	\end{align}
    	which  is an entire  solution of
    	$$
    	(\Delta +k^2)u(\mathbf x)=0\quad \mbox{ in } \quad \mathbb R^n,\quad n=2 \mbox{  or }3.
    	$$
  By \cite[Theorem 2 and Remark 2]{Wec},  we  know  that  the set of  the Herglotz wave function is dense with respect to $H^1$ norm  in the set  of the solution to
  $$
 (\Delta +k^2 ) v(\mathbf x)=0 \quad \mbox{ in } \quad D, \quad D\subset \mathbb R^n,\quad n=2 \mbox{  or }3,
  $$ 	
  where $D$ is a bounded  Lipschitz  domain with  a connected  complement.

  Consider the transmission eigenvalue problem \eqref{0eq:tr} defined  in a bounded Lipschitz domain $\Omega$ with a connected complement. Since $\Delta$ is invariant under rigid motions, without loss of generality, we always assume that $ \mathbf 0 \in \partial  \Omega$ throughout of the rest of this paper. In Theorem \ref{thm:2D}, we establish the vanishing property of the transmission eigenfunctions  near  a convex planar corner under $H^1$ regularity with certain Herglotz wave approximation assumptions in the underlying corner. We postpone the proof of Theorem \ref{thm:2D} in the subsection \ref{subsec:3.1}. Compared with the assumptions in \cite[Theorem 2.1]{DCL}, we remove the technical condition $qw\in C^{\alpha}(\overline{S}_h )$, which is critical for the analysis in \cite{DCL}. 


    \begin{thm}\label{thm:2D}
     	Consider a pair of transmission  eigenfunctions $v\in H^{1}(\Omega)$ and $w\in H^{1}(\Omega)$   to (\ref{0eq:tr}) associated with $k\in \mathbb R_+$, where $\Omega$ is a bounded  Lipschitz  domain with  a connected  complement. Suppose that $\mathbf 0\in \Gamma \subset \partial \Omega$ such that $\Omega \cap B_h=\mathcal K \cap B_h=S_h$, where the sector $\mathcal K$ is  defined by (\ref{1eq:sec}) and $h\in \mathbb R_+$ is sufficiently small such that $q\in H^2(\overline{S}_h)$ and $\eta \in C^{\alpha}(\overline{\Gamma_h^\pm} )$, where $\alpha \in (0,1)$.  If the following conditions are fulfilled:
     	\begin{itemize}
     		\item[(a)] for any given positive constants $\beta$ and $\gamma$ satisfying
     		 \begin{equation}\label{eq:assump1}
       	\gamma <\alpha \beta,
       \end{equation}
        the transmission eigenfunction $v$ can be approximated in $H^{1}(S_h)$ by the Herglotz wave functions
     		$$
     		v_j=\int_{\mathbb S^{1}} e^{\mathrm{i}k\xi\cdot \mathbf{x}}g_j (\xi)\mathrm d\xi,j=1,2,\cdots,
     		$$
     		with the kernels $g_j$ satisfying the approximation property
     		\begin{equation}\label{1eq:herg}
    	\|v-v_{j}\|_{H^{1}}\leq j^{-\beta},\quad \|g_{j}\|_{L^{2}(\mathbb S^{1})}\leq j^{\gamma };
    	\end{equation}
    	
    	 \item[(b)] $\eta$ does not vanish at $\mathbf 0$, where $\mathbf 0$ is the vertex of $S_h$;

       \item[(c)] the open angle of $S_h$ satisfies

       \begin{equation}\notag
        -\pi <\theta_m<\theta_M<\pi\ and \ 0<\theta_M-\theta_m <\pi;
       \end{equation}
     	  \end{itemize}
     	then one has
     	\begin{equation}\label{1eq:v0}
  \lim_{\lambda \to +0}\frac{1}{m(B(\mathbf 0,\lambda)\cap \Omega)}\int_{B(\mathbf 0,\lambda)\cap \Omega}\vert v(\mathbf x)\vert \mathrm d\mathbf x=0,
     	\end{equation}
     	 where $m(B(\mathbf 0,\lambda)\cap \Omega)$ is the area of $B(\mathbf 0,\lambda)\cap \Omega$.

    \end{thm}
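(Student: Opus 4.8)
\smallskip
\noindent\emph{Proof strategy.} The plan is to test the two equations in \eqref{0eq:tr} against a CGO solution of the \emph{conductive} operator $\Delta+k^2q$ and to feed in the transmission conditions on $\Gamma_h^{\pm}$, so that the corner contribution of the resulting boundary integral isolates the vertex value of $v$. Let $u_0(\mathbf x)=(1+\psi(\mathbf x))e^{\rho\cdot\mathbf x}$ be the CGO solution furnished by Lemma \ref{1lem:nor}, with $\rho=-\tau(\mathbf d+\mathrm i\mathbf d^{\perp})$ and $\Delta u_0+k^2qu_0=0$; note $q\in H^2(\overline S_h)$ satisfies the hypotheses of Lemma \ref{1lem:nor} in $n=2$. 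Since the opening angle of $S_h$ is less than $\pi$ by assumption (c), the sector lies in an open half-plane, so I may fix unit vectors $\mathbf d\perp\mathbf d^{\perp}$ with $\mathbf d\cdot\mathbf n>0$ for every unit vector $\mathbf n\in\overline{\mathcal K}\cap\mathbb S^{1}$; this forces $|u_0|\lesssim(1+|\psi|)e^{-c\tau r}$ on $S_h$ for some $c>0$.

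First I would apply Green's formula (Lemma \ref{2lem:green}) on $S_h$ twice. Testing $\Delta w+k^2qw=0$ against $u_0$ gives, because both solve the same equation,
\[
\int_{\partial S_h}\bigl(u_0\,\partial_\nu w-w\,\partial_\nu u_0\bigr)\,\mathrm d\sigma=0,
\]
whereas testing $\Delta v+k^2v=0$ against $u_0$ produces a genuine volume term $k^2\int_{S_h}Vu_0v\,\mathrm d\mathbf x$. Subtracting the two identities and inserting $w=v$, $\partial_\nu w-\partial_\nu v=\eta v$ on $\Gamma_h^{\pm}$ yields the working identity
\[
\int_{\Gamma_h^{+}\cup\Gamma_h^{-}}\eta\,v\,u_0\,\mathrm d\sigma
=-k^2\int_{S_h}Vu_0v\,\mathrm d\mathbf x-I_{\Lambda_h},\qquad
I_{\Lambda_h}=\int_{\Lambda_h}\bigl(u_0\,\partial_\nu(w-v)-(w-v)\,\partial_\nu u_0\bigr)\mathrm d\sigma.
\]
The decisive gain over \cite{DCL} is structural: using a CGO solution of $\Delta+k^2q$ (rather than of $\Delta$) makes the $w$-identity vanish identically, so the product $qw$ is never required to be H\"older continuous, which is exactly the assumption we wish to drop.

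Next I would analyse each piece as $\tau\to\infty$, after splitting $v=v_j+(v-v_j)$ using the Herglotz approximant of assumption (a). On the arc one has $|u_0|\lesssim e^{-c\tau h}$ and $|\partial_\nu u_0|\lesssim\tau e^{-c\tau h}$, while $w-v$ and $\partial_\nu(w-v)$ are finite there by interior elliptic regularity away from the vertex, so $I_{\Lambda_h}$ is exponentially small. The volume term is controlled via $\|u_0\|_{L^1(S_h)}\lesssim\tau^{-2}$, the pointwise bound $|v_j|\le C\|g_j\|_{L^2(\mathbb S^1)}\le Cj^{\gamma}$ and $\|v-v_j\|_{L^2(S_h)}\le j^{-\beta}$, giving $O(j^{\gamma}\tau^{-2})+O(\tau^{-1}j^{-\beta})$. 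For the boundary term I freeze $\eta v_j$ at the vertex: parametrising $\Gamma_h^{\pm}$ by $\mathbf x=r\mathbf n_{\pm}$ turns each integral into $\int_0^h(1+\psi)e^{-\mu_\pm r}\eta v_j\,\mathrm dr$ with $\mu_\pm=\tau(\mathbf d+\mathrm i\mathbf d^{\perp})\cdot\mathbf n_\pm$, $\Re\mu_\pm>0$. Writing $\eta v_j=\eta(\mathbf 0)v_j(\mathbf 0)+[\eta v_j-\eta(\mathbf 0)v_j(\mathbf 0)]$ and invoking \eqref{1eq:gamma} (with integration exponent $0$) the frozen part yields the leading contribution
\[
\eta(\mathbf 0)\,v_j(\mathbf 0)\Bigl(\frac{1}{\mu_+}+\frac{1}{\mu_-}\Bigr)
=\frac{\eta(\mathbf 0)\,v_j(\mathbf 0)}{\tau}\Bigl(\frac{1}{z_+}+\frac{1}{z_-}\Bigr),\qquad z_\pm=(\mathbf d+\mathrm i\mathbf d^{\perp})\cdot\mathbf n_\pm,
\]
while the remainder is $O(j^{\gamma}\tau^{-1-\alpha})$, since $\eta\in C^{\alpha}(\overline{\Gamma_h^{\pm}})$ and $v_j$ is Lipschitz with constant $\lesssim j^{\gamma}$ (apply \eqref{1eq:gamma} with integration exponent $\alpha$); the trace of $v-v_j$ contributes $O(\tau^{-1/2}j^{-\beta})$, and the $\psi$-terms are of lower order by the smallness \eqref{1nor:psi}.

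Finally I would multiply the identity by $\tau$ and couple the parameters. The error terms become $O(j^{\gamma}\tau^{-\alpha})+O(\tau^{1/2}j^{-\beta})+o(1)$, which tend to $0$ provided $j^{\gamma/\alpha}\ll\tau\ll j^{2\beta}$; such a window exists precisely because hypothesis \eqref{eq:assump1}, namely $\gamma<\alpha\beta$, forces $\gamma/\alpha<\beta<2\beta$. Choosing $\tau=\tau(j)\to\infty$ inside this window, and using the convexity once more to arrange (by a suitable orientation of $\mathbf d^{\perp}$) the non-degeneracy $\tfrac1{z_+}+\tfrac1{z_-}\neq0$, the surviving relation reads $\eta(\mathbf 0)v_j(\mathbf 0)(\tfrac1{z_+}+\tfrac1{z_-})\to0$; with $\eta(\mathbf 0)\neq0$ from (b) this forces $v_j(\mathbf 0)\to0$. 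To reach \eqref{1eq:v0} I would then set $j=\lambda^{-t}$ with $1/\beta<t<1/\gamma$ (available since $\gamma<\beta$) and estimate
\[
\frac{1}{m(B(\mathbf 0,\lambda)\cap\Omega)}\int_{B(\mathbf 0,\lambda)\cap\Omega}|v|\,\mathrm d\mathbf x
\lesssim \lambda^{-1}\|v-v_j\|_{L^2}+|v_j(\mathbf 0)|+j^{\gamma}\lambda,
\]
whose three terms each vanish as $\lambda\to+0$. The main obstacle is the simultaneous control of the three competing parameters $\tau$, $j$, $\lambda$: the CGO remainder $\psi$ and the H\"older defect of $\eta v_j$ demand $\tau$ large relative to $j^{\gamma}$, whereas the Herglotz error demands $\tau$ small relative to $j^{2\beta}$, and it is exactly the sharp balance $\gamma<\alpha\beta$ that keeps these requirements compatible; the second delicate point is verifying $\tfrac1{z_+}+\tfrac1{z_-}\neq0$ from the convexity assumption.
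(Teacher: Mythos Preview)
Your proposal is correct and follows essentially the same route as the paper: the same CGO solution of $\Delta+k^2q$ from Lemma \ref{1lem:nor}, the same Green's identity on $S_h$ producing the corner boundary integral $\int_{\Gamma_h^\pm}\eta v u_0\,\mathrm d\sigma$, the same Herglotz splitting $v=v_j+(v-v_j)$ with vertex-freezing of $\eta v_j$, and the same parameter coupling $\tau=j^s$ with $s\in(\gamma/\alpha,\beta)$ (your window $(\gamma/\alpha,2\beta)$ is slightly wider but equally valid under \eqref{eq:assump1}). Your derivation of \eqref{1eq:v0} from $v_j(\mathbf 0)\to0$ via $j\sim\lambda^{-t}$ is in fact more explicit than the paper's one-line appeal to the integral mean value theorem; and your non-degeneracy $\tfrac1{z_+}+\tfrac1{z_-}\neq0$ actually holds for \emph{every} admissible $\mathbf d,\mathbf d^\perp$ (since $z_++z_-=0$ would force $\mathbf n_++\mathbf n_-=0$, impossible for an opening angle in $(0,\pi)$), so no special orientation of $\mathbf d^\perp$ is needed.
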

    
  It is remarked that the Herglotz approximation property in \eqref{1eq:herg} characterises a regularity lower than H\"older continuity (cf. \cite{LT}). In the following theorem, if the stronger H\"older regularity is imposed on the transmission eigenfunction $v$ near the corner is satisfied, we can prove that $v$ vanishes near the corner point. The proof of Theorem \ref{2D:delta} is a slight modification of the corresponding proof of Theorem \ref{thm:2D}. We only give a  sketched proof of Theorem \ref{2D:delta} at the end of Subsection \ref{subsec:3.1}.
    \begin{thm}\label{2D:delta}
       Consider a pair of transmission  eigenfunctions $v\in H^{1}(\Omega)$ and $w\in H^{1}(\Omega)$   to (\ref{0eq:tr}) associated with $k\in \mathbb R_+$, where $\Omega$ is a bounded  Lipschitz  domain with  a connected  complement. Suppose that $\mathbf 0\in\Gamma \subset \partial \Omega$ such that $\Omega \cap B_h=\mathcal K \cap B_h=S_h$, where the sector $\mathcal K$ is  defined by (\ref{1eq:sec}) and $h\in \mathbb R_+$. If the following conditions are fulfilled:
       	\begin{itemize}
       	\item[(a)] $q\in H^2(\overline{S}_h)$, $v\in C^\alpha(\overline{S}_h) $ and $\eta \in C^{\alpha}(\overline{\Gamma_h^\pm} )$, where $0<\alpha<1$;
       		\item[(b)]the function $\eta$ does not vanish at the vertex $\mathbf 0$, where $\mathbf 0$ is the vertex of $S_h$, i.e.,
       	\begin{equation}\label{2eq:q0}
       	\eta(\mathbf 0)\not=0;
       	\end{equation}
       	\item[(c)]the open angels of $S_h$ satisfies
       	$$-\pi <\theta_m<\theta_M<\pi,\ and\ 0<\theta_M-\theta_m<\pi;$$
       	\end{itemize}
       	
       	then one has
       	\begin{equation}\label{1eq:delv0}
       	 v(\mathbf 0) =0.
       	\end{equation}

       \end{thm}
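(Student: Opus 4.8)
The plan is to test the two Helmholtz-type equations in \eqref{0eq:tr} against the CGO solution $u_0$ of $\Delta+k^2q$ furnished by Lemma \ref{1lem:nor}, to exploit the transmission conditions on $\Gamma_h^\pm$ so as to isolate a boundary integral carrying the factor $\eta(\mathbf 0)v(\mathbf 0)$, and then to extract its leading asymptotics as $\tau\to+\infty$. Concretely, I would first apply the Green formula of Lemma \ref{2lem:green} twice over $S_h$: once to the pair $(w,u_0)$, which both solve $\Delta\,\cdot+k^2q\,\cdot=0$ and hence yield $\int_{\partial S_h}(u_0\partial_\nu w-w\partial_\nu u_0)\,\mathrm d\sigma=0$; and once to the pair $(v,u_0)$, for which the interior contribution does not cancel and produces $k^2\int_{S_h}(q-1)u_0 v\,\mathrm d\mathbf x$. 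Subtracting the two identities and inserting the relations $w=v$, $\partial_\nu w=\partial_\nu v+\eta v$ on $\Gamma_h^\pm$ collapses the boundary integrand there to $-\eta u_0 v$, leaving the key identity
\begin{equation}\notag
\int_{\Gamma_h^+\cup\Gamma_h^-}\eta u_0 v\,\mathrm d\sigma
=-k^2\int_{S_h}(q-1)u_0 v\,\mathrm d\mathbf x
+\int_{\Lambda_h}\big(u_0(\partial_\nu v-\partial_\nu w)+(w-v)\partial_\nu u_0\big)\,\mathrm d\sigma .
\end{equation}

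Next I would estimate the right-hand side. On the arc $\Lambda_h$ one has $\mathbf d\cdot\mathbf x\ge c\,h>0$ once $\mathbf d$ is fixed as below, so $|u_0|$ and $|\partial_\nu u_0|$ are controlled by $C\tau\,e^{-c\tau h}$ apart from the contribution of $\nabla\psi$, while $v,w$ are interior-regular there by elliptic regularity using $q\in H^2(\overline S_h)$; hence the arc term is exponentially small in $\tau$. For the volume term, $q-1=V\in L^\infty$ and $v\in C^\alpha\subset L^\infty$, and $\int_{S_h}|e^{\rho\cdot\mathbf x}|\,\mathrm d\mathbf x=\int_{S_h}e^{-\tau\mathbf d\cdot\mathbf x}\,\mathrm d\mathbf x=O(\tau^{-2})$, so its principal part is $O(\tau^{-2})$. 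The $\psi$-corrections inside $u_0$ are absorbed by combining the decay estimate \eqref{1nor:psi} with H\"older's inequality against the $L^{p'}$-concentration of $e^{\rho\cdot\mathbf x}$. The hard part will be precisely this book-keeping: since the present CGO solves $\Delta+k^2q$ rather than $\Delta$ as in \cite{DCL}, one only controls $\psi$ in $H^{1,p}$ (not in $L^\infty$), so the $\psi$-terms on $\Gamma_h^\pm$, $\Lambda_h$ and in $S_h$ must be handled through trace and $L^p$--$L^{p'}$ estimates together with \eqref{1nor:psi}.

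For the left-hand side I would parametrise each edge by $\mathbf x=r\hat{\mathbf x}_\pm$ with $\hat{\mathbf x}_+=(\cos\theta_{M},\sin\theta_{M})$ and $\hat{\mathbf x}_-=(\cos\theta_{m},\sin\theta_{m})$, write $\eta(r\hat{\mathbf x}_\pm)=\eta(\mathbf 0)+O(r^\alpha)$ and $v(r\hat{\mathbf x}_\pm)=v(\mathbf 0)+O(r^\alpha)$ from the H\"older hypothesis, and apply Proposition \ref{1prop:gamma} with $\mu_\pm=-\rho\cdot\hat{\mathbf x}_\pm=\tau(\mathbf d+\mathrm i\mathbf d^\perp)\cdot\hat{\mathbf x}_\pm$. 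The constant parts yield $\eta(\mathbf 0)v(\mathbf 0)\big(\mu_+^{-1}+\mu_-^{-1}\big)$, of size $\tau^{-1}$, whereas the $O(r^\alpha)$ remainders contribute $O(\tau^{-(1+\alpha)})$ via \eqref{1eq:gamma}, and the $\psi$-part is again of lower order by \eqref{1nor:psi}. Multiplying the whole identity by $\tau$ and letting $\tau\to+\infty$ therefore annihilates every term except the constant edge part, giving
\begin{equation}\notag
\eta(\mathbf 0)\,v(\mathbf 0)\left(\frac{1}{(\mathbf d+\mathrm i\mathbf d^\perp)\cdot\hat{\mathbf x}_+}+\frac{1}{(\mathbf d+\mathrm i\mathbf d^\perp)\cdot\hat{\mathbf x}_-}\right)=0 .
\end{equation}

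Finally I would fix the geometry of the CGO. Taking $\mathbf d$ to be the bisector $\big(\cos\tfrac{\theta_m+\theta_M}{2},\sin\tfrac{\theta_m+\theta_M}{2}\big)$ and $\mathbf d^\perp$ its rotation by $\pi/2$ gives $(\mathbf d+\mathrm i\mathbf d^\perp)\cdot\hat{\mathbf x}_\pm=e^{\pm\mathrm i(\theta_M-\theta_m)/2}$, so that $\Re\big((\mathbf d+\mathrm i\mathbf d^\perp)\cdot\hat{\mathbf x}_\pm\big)=\cos\tfrac{\theta_M-\theta_m}{2}>0$ under the convexity condition $0<\theta_M-\theta_m<\pi$ of hypothesis (c); this positivity is exactly what guarantees the $L^{p'}$-decay of $e^{\rho\cdot\mathbf x}$ and the applicability of Proposition \ref{1prop:gamma} used above, and it makes the bracketed factor equal to $2\cos\tfrac{\theta_M-\theta_m}{2}\neq 0$. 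Since $\eta(\mathbf 0)\neq 0$ by hypothesis (b), the displayed relation forces $v(\mathbf 0)=0$, which is \eqref{1eq:delv0}. Relative to Theorem \ref{thm:2D}, the single modification is that the pointwise H\"older expansion of $v$ replaces the Herglotz approximation step, so that one recovers the pointwise value $v(\mathbf 0)=0$ rather than the averaged vanishing \eqref{1eq:v0}.
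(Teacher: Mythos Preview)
Your proposal is correct and follows essentially the same route as the paper. The paper's sketched proof of Theorem~\ref{2D:delta} likewise starts from the Green identity \eqref{1eq:green}, inserts the H\"older expansions $v=v(\mathbf 0)+\delta v$, $\eta=\eta(\mathbf 0)+\delta\eta$, $f=(q-1)v=f(\mathbf 0)+\delta f$, and then reuses the asymptotic estimates developed for Theorem~\ref{thm:2D} (Lemmas~\ref{2lem:psi8} and~\ref{2lem:u0est}, Propositions~\ref{2prop:tau2} and~\ref{1prop:enorm}) to show that after multiplying by $\tau$ and letting $\tau\to\infty$ only the term $\eta(\mathbf 0)v(\mathbf 0)\int_{\Gamma_h^\pm}e^{\rho\cdot\mathbf x}\,\mathrm d\sigma$ survives. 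Your explicit choice of $\mathbf d$ as the bisector, yielding the nonzero factor $2\cos\tfrac{\theta_M-\theta_m}{2}$, is simply a concrete instance of the lower bound the paper states in Proposition~\ref{2prop:tau2}; the remaining book-keeping of the $\psi$-contributions that you flag as ``the hard part'' is exactly what the paper carries out in those auxiliary lemmas.
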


Recall that $\Omega$ is a bounded Lipschitz domain and $\Gamma$ is an open subset of $\partial \Omega$.  Consider the  classical  transmission eigenvalue problem:
\begin{equation}\label{3eq:treta0}
     \begin{cases}
      &\Delta w+k^{2}qw=0  \quad \hspace*{0.55cm} \mbox {$\mathrm {in}\  \Omega$},\\
      &\Delta v+k^{2}v=0   \quad \hspace*{0.9cm}\mbox {$\mathrm {in}\ \Omega$},\\
      &w=v,\ \partial_{\nu}w=\partial_{\nu}v  \hspace*{0.5cm}\mbox{$\mathrm {on}\   \Gamma$},
     \end{cases}
     \end{equation}
   which can be formulated from \eqref{0eq:tr} by setting $\eta\equiv 0$.  When $\Gamma=\partial  \Omega$, \eqref{3eq:treta0} is referred to be {\it interior transmission eigenvalue problem}, which has a colorful history in invere scattering theory (cf.  \cite{CCH,CH2013,Liu22}   and references therein). It was revealed that in \cite[Theorem 1.2]{B2018} the transmission eigenfunction $v$ and $w$ to \eqref{3eq:treta0}   must vanish near a planar corner of $\Gamma$ if $v$ or $ w$ is $H^2$-smooth near the underlying  corner and $q$ is H\"older continuous at the corner point.  In the following Corollary \ref{cor1}, we shall establish the vanishing characterization of  transmission eigenfunctions to \eqref{3eq:treta0}  near a convex planar corner under two regularity criterions on the underlying transmission eigenfunctions near the corner.  We should emphasize that we  remove the $H^2$-smooth near the corner assumption on $v$ and $w$ as stated in \cite[Theorem 1.2]{B2018}, where we only require that $v$ is H\"older continuous at the corner point or holds  a certain regularity condition in terms of Herglotz wave approximations (which is weaker than H\"older continuity as remarked earlier).  The proof of Corollary \ref{cor1} is postponed to Subsection \ref{subsec:32}.

    \begin{cor}\label{cor1}
    		Consider a pair of transmission  eigenfunctions $v\in H^{1}(\Omega)$ and $w\in H^{1}(\Omega)$    to \eqref{3eq:treta0}  associated with $k\in \mathbb R_+$, where $\Omega$ is a bounded  Lipschitz  domain with  a connected  complement. Suppose that $\mathbf 0\in \Gamma \subset  \partial \Omega$ such that $\Omega \cap B_h=\mathcal K \cap B_h=S_h$, where the sector $\mathcal K$ is  defined by (\ref{1eq:sec}) and $h\in \mathbb R_+$ is sufficient small such that $q\in H^2(\overline S_h)$ and $q(\mathbf 0)\neq 1$. The following two statements are valid.
    		\begin{itemize}
    			\item[(a)] For any given positive constants $\beta$ and $\gamma$ satisfying
    			$\gamma <\beta$,
    			if the transmission eigenfunction $v$ and Herglotz wave functions $v_j$ with the kernel $g_j$ satisfying the approximation property \eqref{1eq:herg}, then  we have the vanishing property of $v$ near $S_h$ in the sense of \eqref{1eq:v0}.
    			
    			\item[(b)] If $v\in C^\alpha (\overline{S_h})$ with $\alpha\in  (0,1)$, then it holds that $v(\mathbf 0)=0$.
    		\end{itemize}
    		

    \end{cor}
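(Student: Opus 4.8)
The plan is to reproduce the corner analysis underlying Theorem \ref{thm:2D} and Theorem \ref{2D:delta}, but with the roles of the two coefficients interchanged. Since $\eta\equiv 0$ in \eqref{3eq:treta0}, the boundary contribution that drives the argument for Theorem \ref{thm:2D} now disappears, and the leading asymptotics will instead be supplied by the potential $q-1$, which is nondegenerate at the vertex precisely because $q(\mathbf 0)\neq 1$. First I would extend $q$ from $\overline S_h$ to $\mathbb R^2$ while preserving its regularity, so that the hypotheses of Proposition \ref{1pro:nor} hold (in two dimensions $H^2(\overline S_h)$ extends into $H^{1,1+\epsilon_0}$), and then invoke Lemma \ref{1lem:nor} to produce a CGO solution $u_0=(1+\psi)e^{\rho\cdot\mathbf x}$ of $\Delta u_0+k^2qu_0=0$ with $\rho=-\tau(\mathbf d+\mathrm i\mathbf d^{\perp})$. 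Choosing $\mathbf d$ to bisect the sector (possible since the corner is convex, $0<\theta_M-\theta_m<\pi$) guarantees $\Re(\rho\cdot\mathbf x)<0$ on $\mathcal K$ away from the vertex, which forces exponential decay of $u_0$ on the outer arc $\Lambda_h$.

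Next I would apply the Green formula of Lemma \ref{2lem:green} separately to the pairs $(u_0,w)$ and $(u_0,v)$ over $S_h$. Since $w$ and $u_0$ solve the same equation $\Delta(\cdot)+k^2q(\cdot)=0$, the first pairing has a vanishing bulk term, while the second produces the volume integral $k^2\int_{S_h}(q-1)u_0v\,\mathrm d\mathbf x$. Subtracting the two identities and using the transmission conditions $w=v$, $\partial_\nu w=\partial_\nu v$ on $\Gamma_h^{\pm}$ (this is exactly where $\eta\equiv 0$ enters), the contributions on $\Gamma_h^{\pm}$ cancel identically, leaving
\begin{equation}\notag
k^2\int_{S_h}(q-1)u_0v\,\mathrm d\mathbf x=\int_{\Lambda_h}\big[(w-v)\partial_\nu u_0-u_0\,\partial_\nu(w-v)\big]\,\mathrm d\sigma,
\end{equation}
whose right-hand side is exponentially small in $\tau$. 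I would then expand the left-hand side by writing $q-1=(q(\mathbf 0)-1)+(q-q(\mathbf 0))$, $u_0=e^{\rho\cdot\mathbf x}+\psi e^{\rho\cdot\mathbf x}$, and, in case (b), $v=v(\mathbf 0)+(v-v(\mathbf 0))$. The principal term $k^2(q(\mathbf 0)-1)v(\mathbf 0)\int_{S_h}e^{\rho\cdot\mathbf x}\,\mathrm d\mathbf x$ is evaluated in polar coordinates via Proposition \ref{1prop:gamma}, giving an asymptotics of the form $\mathcal I(\theta_m,\theta_M)\,\tau^{-2}$ with a nonvanishing angular factor $\mathcal I$ precisely because the corner is convex. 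Dividing by $\tau^{-2}$ and letting $\tau\to\infty$ yields $v(\mathbf 0)=0$, which proves (b).

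For the Herglotz case (a), I would replace $v$ by $v_j+(v-v_j)$ in the identity above, estimate $\big|\int_{S_h}(q-1)u_0(v-v_j)\big|\lesssim\|u_0\|_{L^2(S_h)}\,\|v-v_j\|_{L^2(S_h)}\lesssim\tau^{-1}j^{-\beta}$, and control the smooth Herglotz contribution through $|v_j(\mathbf 0)|\lesssim\|g_j\|_{L^2(\mathbb S^1)}\lesssim j^{\gamma}$ together with the Lipschitz-type bound $\|v_j\|_{W^{1,\infty}}\lesssim j^{\gamma}$. Collecting terms gives $|v_j(\mathbf 0)|\lesssim \tau^2 e^{-c\tau}+j^{\gamma}\tau^{-1}+\tau j^{-\beta}+o(1)$, so choosing $\tau=\tau(j)$ with $j^{\gamma}\ll\tau\ll j^{\beta}$ --- which is possible exactly when $\gamma<\beta$ --- forces $v_j(\mathbf 0)\to 0$, and then the averaged statement \eqref{1eq:v0} follows as in Theorem \ref{thm:2D} by combining $v_j(\mathbf 0)\to 0$ with $v_j\to v$ in $H^1(S_h)$. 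The clean condition $\gamma<\beta$ (rather than $\gamma<\alpha\beta$ as in Theorem \ref{thm:2D}) reflects that the coefficient generating the leading asymptotics is now $q-1$, and $q\in H^2(\overline S_h)\hookrightarrow C^{0,\alpha'}(\overline S_h)$ for every $\alpha'<1$ in two dimensions, so the effective H\"older exponent may be taken arbitrarily close to $1$.

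I expect the main obstacle to be the error bookkeeping. Because $\eta\equiv 0$ annihilates the $\mathcal O(\tau^{-1})$ boundary term, the genuine leading order is now $\tau^{-2}$, one power of $\tau$ smaller than in the conductive setting; consequently the CGO remainder $\psi$ (controlled only through \eqref{1nor:psi} and the multiplication estimate of Proposition \ref{1pro:nor}), the potential correction $q-q(\mathbf 0)$, and the Herglotz tail must each be shown to be genuinely $o(\tau^{-2})$, which demands sharper quantitative estimates than those needed for Theorem \ref{thm:2D}. A secondary point requiring care is verifying that the angular factor $\mathcal I(\theta_m,\theta_M)$ does not vanish, which is where the convexity $0<\theta_M-\theta_m<\pi$ of the corner is indispensable.
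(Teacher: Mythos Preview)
Your proposal is correct and follows essentially the same approach as the paper. The paper's proof of part (a) derives the identity $k^2 f_j(\mathbf 0)\int_{S_h}e^{\rho\cdot\mathbf x}\,\mathrm d\mathbf x=I_1+I_2+I_3+J_1+J_2$ (with $f_j=(q-1)v_j$), bounds the left-hand side below via a separate proposition showing $\bigl|\int_{S_h}e^{\rho\cdot\mathbf x}\,\mathrm d\mathbf x\bigr|\geq \widetilde{C_{S_h}}\tau^{-2}-\mathcal O(\tau^{-1}e^{-\zeta h\tau/2})$, and reaches the same balance $j^{-\beta+s}+j^{\gamma-\alpha s}$ with $\tau=j^s$; your anticipated obstacle of checking that the angular factor is nonzero is exactly what that proposition handles. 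One very minor point: the paper asserts $H^2(\overline S_h)\hookrightarrow C^{1}(\overline S_h)$ and takes $\alpha=1$ directly, whereas you (more cautiously, and arguably more correctly at the borderline) take $\alpha'$ arbitrarily close to $1$; either way the window $\gamma/\alpha<s<\beta$ is nonempty precisely when $\gamma<\beta$.
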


        \subsection{Proof of Theorem  \ref{thm:2D} }\label{subsec:3.1}
     Given a convex sector $\mathcal K$ defined by \eqref{1eq:sec} and a positive constant $\zeta$, we define $\mathcal{K}_{\zeta}$ as the open set of $\mathbb{S}^{1}$ which is composed of all directions $\mathbf d\in \mathbb{S}^{1}$ satisfying that
    \begin{equation}\label{2eq:zeta}
    \mathbf d\cdot \hat{\mathbf{x}}>\zeta>0, \quad for\ all\ \hat{\mathbf{x}} \in\mathcal{K} \cap\mathbb S^1 .
    \end{equation}
    Throughout the present section, we always assume that the unit vector $ \mathbf d$ in the form of the CGO solution $u_0$ given by \eqref{1eq:cgo} fulfills \eqref{2eq:zeta}.

    \begin{prop}\label{2prop:tau2}
Let $S_{h}$ and $\rho$ be defined in (\ref{1eq:not}) and (\ref{1eq:eta}), respectively, where $ \mathbf d$ satisfies   \eqref{2eq:zeta}. Then we have
    	\begin{equation}\label{1eq:tau2}
    	\left\vert \int_{\Gamma_h^{\pm}}e^{\rho \cdot \mathbf x}\mathrm d \mathbf x\right \vert \geq  \frac{C_{S_h }}{\tau}-\mathcal O\left(\frac{1}{\tau}e^{-\frac{1}{2}\zeta h \tau}\right),
    	\end{equation}
    	for sufficiently large $\tau$, where  $C_{S_h}$ is a positive number only depending on the opening angle $\theta_M-\theta_m$ of $\mathcal K$ and $\zeta$.
    	\end{prop}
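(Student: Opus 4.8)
The plan is to reduce the line integral over each straight edge of $S_h$ to a scalar Laplace-type integral and then read off the leading term from Proposition~\ref{1prop:gamma}. I parametrise the edge $\Gamma_h^{+}$ (the case $\Gamma_h^{-}$ being identical) by $\mathbf x = r\,\hat{\mathbf x}^{+}$ with $0<r<h$, where $\hat{\mathbf x}^{+}=(\cos\theta_M,\sin\theta_M)\in\overline{\mathcal K}\cap\mathbb S^{1}$ is the unit direction of $\Gamma^{+}$ and the arc-length element is $\mathrm d\mathbf x=\mathrm dr$. Since $\rho=-\tau(\mathbf d+\mathrm i\,\mathbf d^{\perp})$, one computes $\rho\cdot\mathbf x=-\tau r\big(\mathbf d\cdot\hat{\mathbf x}^{+}+\mathrm i\,\mathbf d^{\perp}\cdot\hat{\mathbf x}^{+}\big)=:-\mu r$, so that $\int_{\Gamma_h^{+}}e^{\rho\cdot\mathbf x}\,\mathrm d\mathbf x=\int_0^{h}e^{-\mu r}\,\mathrm dr$ with $\mu:=\tau(a+\mathrm i b)$, $a:=\mathbf d\cdot\hat{\mathbf x}^{+}$, $b:=\mathbf d^{\perp}\cdot\hat{\mathbf x}^{+}$.

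Two structural facts drive the estimate. First, because $n=2$ and $\{\mathbf d,\mathbf d^{\perp}\}$ is an orthonormal basis of $\mathbb R^{2}$, Parseval's identity gives $a^{2}+b^{2}=|\hat{\mathbf x}^{+}|^{2}=1$, whence $|\mu|=\tau$ exactly; this is what makes the leading term have size $\tau^{-1}$. Second, although $\hat{\mathbf x}^{+}$ lies on $\partial\mathcal K$ rather than in the open sector, passing to the limit in the admissibility condition \eqref{2eq:zeta} along directions in $\mathcal K\cap\mathbb S^{1}$ yields $\Re\mu=\tau a\geq\zeta\tau>0$. The positivity of $\Re\mu$, uniform in $\tau$ through $\zeta$, is exactly what produces the exponentially small remainder.

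With these in hand I would invoke Proposition~\ref{1prop:gamma}: the case $\alpha=0$ of \eqref{1eq:gamma} (which follows at once from the direct evaluation $\int_0^{h}e^{-\mu r}\,\mathrm dr=\mu^{-1}(1-e^{-\mu h})=\mu^{-1}+\mathcal O(|\mu|^{-1}e^{-h\Re\mu})$, recalling $\Gamma(1)=1$ and $0<h<e$) gives $\int_0^{h}e^{-\mu r}\,\mathrm dr=\mu^{-1}+\mathcal O\!\big(\tfrac{2}{\Re\mu}e^{-\frac{h}{2}\Re\mu}\big)$. Taking moduli and applying the reverse triangle inequality together with $|\mu|=\tau$ gives $\big|\int_{\Gamma_h^{+}}e^{\rho\cdot\mathbf x}\,\mathrm d\mathbf x\big|\geq\tfrac{1}{\tau}-\mathcal O\!\big(\tfrac{2}{\Re\mu}e^{-\frac{h}{2}\Re\mu}\big)$. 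Finally, since $x\mapsto x^{-1}e^{-hx/2}$ decreases on $(0,\infty)$, substituting the lower bound $\Re\mu\geq\zeta\tau$ enlarges the remainder to $\mathcal O\!\big(\tfrac{1}{\tau}e^{-\frac12\zeta h\tau}\big)$, which yields \eqref{1eq:tau2} with $C_{S_h}$ taken to be any fixed constant in $(0,1]$ (depending only on $\zeta$, and hence on the opening angle $\theta_M-\theta_m$, since $\zeta$ is determined by it). The edge $\Gamma_h^{-}$ is treated verbatim.

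I do not expect a genuine obstacle here; the statement is essentially a direct computation. The only points demanding care are the exact identity $|\mu|=\tau$, which is special to the planar case where $\{\mathbf d,\mathbf d^{\perp}\}$ spans the whole plane, and the uniform sign control $\Re\mu\geq\zeta\tau$, which is precisely where the cone admissibility hypothesis \eqref{2eq:zeta} on $\mathbf d$ is indispensable: absent a positive lower bound $\zeta$ on $\mathbf d\cdot\hat{\mathbf x}$ over the closed sector, the remainder would fail to decay and the lower bound would collapse.
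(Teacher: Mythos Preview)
Your computation for a single edge is clean and correct, but there is a genuine gap: in the paper the symbol $\Gamma_h^{\pm}$ denotes the \emph{union} $\Gamma_h^{+}\cup\Gamma_h^{-}$, and the proposition (as it is later applied to the term $I=\eta(\mathbf 0)v_j(\mathbf 0)\int_{\Gamma_h^{\pm}}e^{\rho\cdot\mathbf x}\,\mathrm d\sigma$) asserts a lower bound for the \emph{sum} of the two edge integrals. Establishing $|\int_{\Gamma_h^{+}}|\geq\tau^{-1}-\mathcal O(\cdots)$ and the same for $\Gamma_h^{-}$ does not yield a lower bound for $|\int_{\Gamma_h^{+}}+\int_{\Gamma_h^{-}}|$: the two leading terms $\mu_1^{-1}$ and $\mu_2^{-1}$ are genuinely complex and could in principle cancel. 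So ``the edge $\Gamma_h^{-}$ is treated verbatim'' does not close the argument.

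The paper's proof proceeds exactly as you do up to the point of extracting the leading terms, but then keeps the sum $\tfrac{1}{\tau}\big(\tfrac{1}{(\mathbf d+\mathrm i\mathbf d^\perp)\cdot\hat{\mathbf x}_1}+\tfrac{1}{(\mathbf d+\mathrm i\mathbf d^\perp)\cdot\hat{\mathbf x}_2}\big)$ and bounds its modulus below by a constant $C_{S_h}/\tau$. The missing step is easy using the very observation you already made: since $\{\mathbf d,\mathbf d^\perp\}$ is an orthonormal basis of $\mathbb R^2$, one has $|(\mathbf d+\mathrm i\mathbf d^\perp)\cdot v|=|v|$ for every real vector $v$, so $|\mu_j|=\tau$ and $|\mu_1+\mu_2|=\tau\,|\hat{\mathbf x}_1+\hat{\mathbf x}_2|=2\tau\cos\!\big(\tfrac{\theta_M-\theta_m}{2}\big)$. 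Hence
\[
\Big|\frac{1}{\mu_1}+\frac{1}{\mu_2}\Big|=\frac{|\mu_1+\mu_2|}{|\mu_1|\,|\mu_2|}=\frac{2\cos\!\big(\tfrac{\theta_M-\theta_m}{2}\big)}{\tau},
\]
which is strictly positive precisely because the opening angle satisfies $0<\theta_M-\theta_m<\pi$. This gives $C_{S_h}=2\cos\!\big(\tfrac{\theta_M-\theta_m}{2}\big)$, depending on the opening angle as the statement claims; your proposed ``any constant in $(0,1]$'' is not justified without this non-cancellation argument.
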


    \begin{proof}
    	Using polar coordinates transformation and Proposition \ref{1prop:gamma}, we have
    	\begin{equation}\notag
    	\begin{aligned}
    		\int_{\Gamma_h^{\pm}}e^{\rho\cdot \mathbf x}\mathrm d \sigma
    		&=\frac{\Gamma(1)}{\tau}\frac{1}{(\mathbf d+\mathrm i\mathbf d^{\perp})\cdot \mathbf{\hat x_1}}-I_{R_1}+\frac{\Gamma(1)}{\tau}\frac{1}{(\mathbf d+\mathrm i\mathbf d^{\perp})\cdot \mathbf{\hat x_2}}-I_{R_2},
    	\end{aligned}
    	\end{equation}
    	where $\mathbf {\hat x_1}$ and $\mathbf {\hat x_2}$ are unit  vector of $\mathbf x$ on $\Gamma^-$ and $\Gamma ^+$, and
    	\begin{equation}\notag
    	I_{R_1}=\int_{\Gamma^-\setminus \Gamma^-_h}e^{-\tau (\mathbf d+\mathrm i\mathbf d^\perp)}\mathrm d\sigma,\quad I_{R_2}=\int_{\Gamma^+\setminus \Gamma^+_h}e^{-\tau (\mathbf d+\mathrm i\mathbf d^\perp)}\mathrm d\sigma.
    	\end{equation}
    	Hence, with the help of Proposition \ref{1prop:gamma}, for sufficiently large $\tau$, we have the following integral inequality
    	\begin{equation}\label{1eq:etatau2}
    	\begin{aligned}
    	  \left\vert  \int_{\Gamma_h^{\pm}}e^{\rho\cdot \mathbf x}\mathrm d \sigma \right \vert&\geq\frac{1}{\tau}\left \vert \frac{1}{(\mathbf d+\mathrm i\mathbf d^{\perp})\cdot \mathbf{\hat x_1}} + \frac{1}{(\mathbf d+\mathrm i\mathbf d^{\perp})\cdot \mathbf{\hat x_2}}  \right \vert-\vert I_{R_1}\vert -\vert I_{R_2} \vert\\
    	  &\geq \frac{C_h}{\tau}-O\left(\frac{1}{\tau}e^{-\frac{1}{2}\zeta h\tau}\right)
    	      		\end{aligned}
    	\end{equation}
    	by using \eqref{2eq:zeta}.    
    \end{proof}

The following proposition can be directly derived by using \eqref{2eq:zeta} and Proposition \ref{1prop:gamma}.

\begin{prop}\label{1prop:enorm}
        For any given $t>0$, we let $S_h$ and $\Gamma_h^\pm$ be defined by  \eqref{1eq:not}. Then one has
    	\begin{equation}\label{1eq:enorm}
    	\vert\vert e^{\rho  \cdot \mathbf {x}} \vert \vert_{L^t(S_h)}\leq C \left(\frac{1}{\tau^2}+\frac{1}{\tau}e^{-\frac{t}{2}\zeta h\tau}\right)^{\frac{1}{t}},
    	\end{equation}
    	\begin{equation}\label{1eq:enormGamma}
    	\vert\vert e^{\rho \cdot \mathbf {x}} \vert \vert_{L^t(\Gamma_h^{\pm})}\leq C \left(\frac{1}{\tau}+\frac{1}{\tau}e^{-\frac{t}{2}\zeta h\tau}\right)^{\frac{1}{t}}.
    	\end{equation}
    	as $\tau\rightarrow \infty$,  where $\vert\vert e^{\rho \cdot \mathbf {x}} \vert \vert_{L^t(S_h)}=\left(\int_{S_h} |e^{\rho \cdot \mathbf {x}} |^t {\mathrm d}x\right)^{1/t}$,  $\rho$ is defined in (\ref{1eq:eta}) and $C$ is a positive constant only depending on $t,\zeta$ .
    	
    	\end{prop}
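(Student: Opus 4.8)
The plan is to reduce both estimates to one-dimensional radial integrals and then invoke the second asymptotic formula \eqref{1eq:gamma} of Proposition \ref{1prop:gamma}. The first step is to observe that, since $\rho=-\tau(\mathbf d+\mathrm i\mathbf d^\perp)$ with $\mathbf d\perp\mathbf d^\perp$, the modulus of the CGO exponential is purely real:
\begin{equation}\notag
\left\vert e^{\rho\cdot\mathbf x}\right\vert=e^{\Re(\rho\cdot\mathbf x)}=e^{-\tau\,\mathbf d\cdot\mathbf x},\qquad \left\vert e^{\rho\cdot\mathbf x}\right\vert^{t}=e^{-t\tau\,\mathbf d\cdot\mathbf x}.
\end{equation}
Writing $\mathbf x=r\hat{\mathbf x}$ with $\hat{\mathbf x}\in\mathcal K\cap\mathbb S^{1}$ and using the selection condition \eqref{2eq:zeta} on $\mathbf d$, one has $\mathbf d\cdot\mathbf x=r\,(\mathbf d\cdot\hat{\mathbf x})\geq \zeta r$, so that the quantity $\mu:=t\tau(\mathbf d\cdot\hat{\mathbf x})$ appearing below is a positive real number satisfying $\Re\mu=\mu\geq t\zeta\tau$. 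This uniform lower bound is exactly what converts the $\mu$-dependent right-hand side of \eqref{1eq:gamma} into a bound in $\tau$ alone.

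For the area estimate \eqref{1eq:enorm} I would pass to polar coordinates, in which $\mathrm d\mathbf x=r\,\mathrm dr\,\mathrm d\theta$, and write
\begin{equation}\notag
\int_{S_h}\left\vert e^{\rho\cdot\mathbf x}\right\vert^{t}\mathrm d\mathbf x=\int_{\theta_m}^{\theta_M}\int_0^h r\,e^{-\mu r}\,\mathrm dr\,\mathrm d\theta.
\end{equation}
The inner integral is precisely the $\alpha=1$ instance of \eqref{1eq:gamma} with $\epsilon=h$, giving $\int_0^h r\,e^{-\mu r}\mathrm dr=\Gamma(2)\mu^{-2}+\mathcal O(\mu^{-1}e^{-\frac h2\mu})$; the Jacobian factor $r$ is what produces the power $\mu^{-2}$ here. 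Inserting the lower bound $\mu\geq t\zeta\tau$ into both terms, integrating the bounded angular variable over $(\theta_m,\theta_M)$, and absorbing the opening angle together with the $t,\zeta$-dependent constants into $C$, one obtains $\int_{S_h}\vert e^{\rho\cdot\mathbf x}\vert^{t}\mathrm d\mathbf x\leq C(\tau^{-2}+\tau^{-1}e^{-\frac t2\zeta h\tau})$; taking the $t$-th root yields \eqref{1eq:enorm}.

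The boundary estimate \eqref{1eq:enormGamma} is identical in spirit but with the crucial difference that there is no Jacobian. Parametrizing $\Gamma_h^{\pm}$ by arclength $r\in(0,h)$ along the fixed edge direction $\hat{\mathbf x}_{1,2}$, one has $\mathrm d\sigma=\mathrm dr$ and $\int_{\Gamma_h^{\pm}}\vert e^{\rho\cdot\mathbf x}\vert^{t}\mathrm d\sigma=\int_0^h e^{-\mu r}\mathrm dr$, which is the $\alpha=0$ case of \eqref{1eq:gamma}, equal to $\Gamma(1)\mu^{-1}+\mathcal O(\mu^{-1}e^{-\frac h2\mu})$. The absence of the factor $r$ replaces the leading $\mu^{-2}$ by $\mu^{-1}$, which after using $\mu\geq t\zeta\tau$ and taking the $t$-th root accounts for the $\tau^{-1}$ (rather than $\tau^{-2}$) in \eqref{1eq:enormGamma}. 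There is no genuine obstacle in this proof: the entire argument is bookkeeping with \eqref{1eq:gamma}. The only point demanding care is keeping track of which power of $\mu$ arises, namely $\mu^{-2}$ in the two-dimensional integral versus $\mu^{-1}$ on the one-dimensional edges, since this single difference is the whole content distinguishing the two displayed bounds, together with ensuring that the angular integration and the opening-angle factor are correctly subsumed into the constant $C=C(t,\zeta)$.
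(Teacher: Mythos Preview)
Your proof is correct and follows exactly the route the paper indicates (the paper only states that the proposition ``can be directly derived by using \eqref{2eq:zeta} and Proposition~\ref{1prop:gamma}'' without further detail). The one cosmetic point is that Proposition~\ref{1prop:gamma} is stated for $\alpha>0$, so your appeal to the ``$\alpha=0$ case'' for the edge integral is not literally covered there; but $\int_0^h e^{-\mu r}\,\mathrm dr=\mu^{-1}(1-e^{-\mu h})$ is elementary and gives the same bound, so this is not a gap.
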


  \begin{lem}\label{lem:31}
  	Under the same setup of Theorem \ref{thm:2D}, let the CGO solution $u_0$ be defined by \eqref{1eq:cgo}. Denote $u=w-v$, where $(v,w)$ is a pair of transmission eigenfunctions of \eqref{0eq:tr} associated with $k$.  Then it holds that
  	\begin{equation}\label{eq:pde sys 2}
  	\begin{cases}
  		&\Delta u_0 +k^2qu_0=0\hspace*{2.1cm}\mbox{in}\quad  S_h,\\
  		&\Delta u +k^2qu=k^2(1-q)v \hspace*{0.9cm} \mbox{in}\quad  S_h,\\
  		&u=0,\quad \partial_\nu u=\eta v  \hspace*{1.9cm} \mbox{on}\quad \Gamma_h^\pm,
  	\end{cases}
  	\end{equation}
  	and \begin{equation}\label{1eq:psi}
    	\vert \vert \psi(\mathbf x) \vert \vert_{H^{1,8}}=\mathcal O(\tau^{-\frac{2}{3}}),
    	\end{equation}
        where $\psi$ and $\tau$ are defined in \eqref{1eq:cgo}.
  \end{lem}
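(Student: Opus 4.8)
The plan is to verify the PDE system \eqref{eq:pde sys 2} directly from the definitions and the transmission conditions, and then to obtain the estimate \eqref{1eq:psi} as a special case of the CGO norm bound in Lemma \ref{1lem:nor}. First I would set $u=w-v$ and derive the second line of \eqref{eq:pde sys 2}: since $\Delta w=-k^2 q w$ (using $q=1+V$) and $\Delta v=-k^2 v$ on $S_h\subset\Omega$, I would subtract to get
\begin{equation}\notag
\Delta u+k^2 q u=\Delta w-\Delta v+k^2 q(w-v)=-k^2 v+k^2 q v=k^2(q-1)v,
\end{equation}
wait—I must be careful with the sign. Computing again, $\Delta u = \Delta w-\Delta v = -k^2 q w + k^2 v$, so $\Delta u + k^2 q u = -k^2 q w + k^2 v + k^2 q(w-v)=k^2 v - k^2 q v = k^2(1-q)v$, matching the stated right-hand side. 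The first line is simply the CGO equation \eqref{1eq:u_0} from Lemma \ref{1lem:nor} restricted to $S_h$. For the boundary conditions on $\Gamma_h^\pm\subset\Gamma$, the transmission condition $w=v$ in \eqref{0eq:tr} gives $u=w-v=0$, and $\partial_\nu w=\partial_\nu v+\eta v$ gives $\partial_\nu u=\partial_\nu w-\partial_\nu v=\eta v$, which is exactly the third line.

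The remaining task is the quantitative bound \eqref{1eq:psi}. The idea is to apply \eqref{1nor:psi} from Lemma \ref{1lem:nor} with the specific choices $n=2$ and $p=8$, and to compute the exponent $n(\tfrac{1}{\tilde p}-\tfrac{1}{p})-2$. I would first check that $p=8$ is admissible under the hypotheses of Lemma \ref{1lem:nor}: for $n=2$ one needs $p>\frac{1}{n-1}=1$ and $\frac{n}{p}<\frac{2}{n+1}+1=\frac{5}{3}$, i.e. $\frac{2}{8}=\frac14<\frac53$, both of which hold. Here the underlying hypothesis that $q\in H^2(\overline S_h)$ (locally) should be reconciled with the $H^{1,1+\epsilon_0}$ requirement of Proposition \ref{1pro:nor}; since we work locally near the corner and $H^2\subset H^{1,1+\epsilon_0}$ on the bounded set $S_h$ for suitable $\epsilon_0$, the multiplication property and hence Lemma \ref{1lem:nor} apply. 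I would then determine $\tilde p$ from the first relation in \eqref{eq:p til cond}, namely $\frac{1}{\tilde p}=\frac{1}{p}+\frac{1}{1+\epsilon_0}=\frac18+\frac{1}{1+\epsilon_0}$, choosing $\epsilon_0$ so that $\tilde p$ lands in the admissible window $\big[\,\tfrac{1}{n+1}+\tfrac1p,\ \tfrac1p+\min\{\tfrac1p,\tfrac1n\}\,\big)$ of \eqref{eq:p til cond} and so that the resulting exponent evaluates to $-\tfrac23$.

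Concretely, with $n=2$ and $p=8$ the target exponent $-\tfrac23$ forces $2(\tfrac{1}{\tilde p}-\tfrac18)=-\tfrac23+2$, i.e. $\tfrac{1}{\tilde p}-\tfrac18=\tfrac23$, giving $\tfrac{1}{\tilde p}=\tfrac{19}{24}$ and hence $\tfrac{1}{1+\epsilon_0}=\tfrac{19}{24}-\tfrac18=\tfrac{16}{24}=\tfrac23$, so $\epsilon_0=\tfrac12$. I would verify this choice satisfies the admissibility window in \eqref{eq:p til cond}: the lower bound $\tfrac{1}{n+1}+\tfrac1p=\tfrac13+\tfrac18=\tfrac{11}{24}$ and the upper bound $\tfrac1p+\min\{\tfrac1p,\tfrac1n\}=\tfrac18+\tfrac18=\tfrac14$ would need $\tfrac{11}{24}\le \tfrac{19}{24}<\tfrac14$, and since $\tfrac{19}{24}\not<\tfrac14$ this particular reading of the window does not close. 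The main obstacle I anticipate is therefore the bookkeeping of these Sobolev exponents: reconciling the exact value $-\tfrac23$ in \eqref{1eq:psi} with the window in \eqref{eq:p til cond} will require carefully tracking which quantity plays the role of $p$ in Lemma \ref{1lem:nor} versus the ambient exponent here (the lemma's $\psi$ is controlled in $H^{1,p}$ with $p$ governed by its own constraints, and the displayed $H^{1,8}$ norm in \eqref{1eq:psi} may arise after a Sobolev embedding or a reselection of parameters). Once the correct parameter identification is fixed so that the admissibility interval is genuinely nonempty and yields $n(\tfrac{1}{\tilde p}-\tfrac1p)-2=-\tfrac23$, the estimate \eqref{1eq:psi} follows immediately from \eqref{1nor:psi}, completing the proof.
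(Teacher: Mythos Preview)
Your approach is essentially the same as the paper's. The derivation of the PDE system \eqref{eq:pde sys 2} is identical, and for the estimate \eqref{1eq:psi} the paper makes exactly the parameter choices you arrive at: it takes $\epsilon_0=\tfrac12$, then sets $\tilde p=1+\tfrac{10}{19}\epsilon_0=\tfrac{24}{19}$, reads off $p=8$ from the relation $\tfrac1p+\tfrac1{1+\epsilon_0}=\tfrac1{\tilde p}$, and plugs into \eqref{1nor:psi} to get $n(\tfrac1{\tilde p}-\tfrac1p)-2=2\cdot\tfrac{16}{24}-2=-\tfrac23$.

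Two remarks. First, the one step the paper makes more explicit than you do is the \emph{global} Sobolev extension: since Lemma~\ref{1lem:nor} produces a CGO solution on all of $\mathbb R^n$, one must extend $q\in H^2(\overline{S_h})$ to some $\tilde q\in H^2(\mathbb R^2)$ and then observe $H^2(\mathbb R^2)\hookrightarrow H^{1,1+\epsilon_0}(\mathbb R^2)$ with $\epsilon_0=\tfrac12$; the CGO solves $\Delta u_0+k^2\tilde q\,u_0=0$ globally and one restricts to $S_h$ where $\tilde q=q$. You allude to this but should state it. Second, the admissibility window you worry about is not verified in the paper's proof either: the paper simply asserts that $\tilde q$ and $p$ fulfil the assumptions of Lemma~\ref{1lem:nor} and cites the construction in \cite{CX21,LME}. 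Your observation that the second inequality in \eqref{eq:p til cond} (as printed) fails for these values is correct; this appears to be a misstatement in Proposition~\ref{1pro:nor} rather than a gap in your argument, and the intended conditions are those in the cited sources.
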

  \begin{proof}
  	Since $q \in H^2(\overline{ S}_h)$, let $\widetilde q $ be the Sobolev extension of $q$, one has $\widetilde q \in H^2$. Hence we have $\widetilde q\in H^{1,1+\epsilon_0}$ where $\epsilon_{0}=\frac{1}{2}$. Therefore $\widetilde q$ satisfies the assumption in  Proposition \ref{1pro:nor}. Let  $\tilde{p}=1+\frac{10}{19}\epsilon_{0}$, according to \eqref{eq:p til cond},   one has $p=8$. Since $\widetilde q$ and $p$ fulfill the assumption of   Lemma \ref{1lem:nor},  there exits a CGO solution with the form \eqref{1eq:cgo} satisfies
  	\begin{align}\label{eq:cgo 2}
  	 \Delta u_0 +k^2\widetilde qu_0=0\quad \mbox{in} \quad \mathbb R^n,\quad \widetilde q~\big|_{S_h} =q.
  	\end{align}
  	By \eqref{1nor:psi}, it yields that \eqref{1eq:psi}. Using \eqref{0eq:tr} and \eqref{eq:cgo 2}, we can obtain \eqref{eq:pde sys 2}.
  \end{proof}


\begin{lem}\label{lem:trace thm}\cite{E.G,LME,GG07,SEM}
	Let $\Omega$ be a Lipschitz bounded and connected subset of $\mathbb R^n,n=2,3$ whose bounded and orientable boundary is denote by $\Gamma$. Let the restriction $\gamma_0(u)=u|_\Gamma$, then the operator $\gamma_0$ is linear and continuous from $H^{1,p}(\Omega)$ onto $H^{1-\frac{1}{p},p}(\Gamma)$ for $1\leq p<\infty$.
\end{lem}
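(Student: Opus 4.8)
The plan is to prove both the continuity and the surjectivity of $\gamma_0$ by localizing to the boundary, flattening each boundary piece to a half-space via a bi-Lipschitz change of variables, and then carrying out the two estimates in the half-space model. Since $\Omega$ is a bounded Lipschitz domain, $\Gamma$ is compact and can be covered by finitely many open sets $U_1,\dots,U_N$ such that, after an orthogonal transformation, $\Omega\cap U_j$ is the region above the graph of a Lipschitz function $\varphi_j:\mathbb R^{n-1}\to\mathbb R$ and $\Gamma\cap U_j$ is that graph. Fixing a partition of unity $\{\chi_j\}_{j=1}^N$ subordinate to this cover and writing $u=\sum_j\chi_j u$, it suffices to establish both assertions for a single chart, because $\gamma_0$ is a local operator and the graph norm on $H^{1,p}(\Omega)$ and the (Gagliardo--Slobodeckij) norm on $H^{1-\frac1p,p}(\Gamma)$ are, up to equivalence, defined chart by chart.

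First I would flatten the chart by the map $\Phi_j(x',x_n)=(x',x_n-\varphi_j(x'))$, which sends $\Omega\cap U_j$ into the upper half-space $\mathbb R^n_+=\{x_n>0\}$ and $\Gamma\cap U_j$ onto a piece of the hyperplane $\{x_n=0\}\cong\mathbb R^{n-1}$. The key structural fact I would invoke is that $\Phi_j$ is bi-Lipschitz, so by Rademacher's theorem and the chain rule it induces bounded isomorphisms of $H^{1,p}$, and, more delicately, of the fractional space $H^{s,p}$ for every $s\in(0,1)$; since $1-\frac1p\in[0,1)$ for $1\le p<\infty$, the trace space is preserved under the change of coordinates.

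In the half-space model I would prove the trace inequality for $u\in C_c^\infty(\overline{\mathbb R^n_+})$,
\[
\|u(\cdot,0)\|_{H^{1-\frac1p,p}(\mathbb R^{n-1})}\le C\,\|u\|_{H^{1,p}(\mathbb R^n_+)},
\]
by bounding the finite differences $u(x'+z',0)-u(x',0)$ through the fundamental theorem of calculus in the normal direction and estimating the resulting double integral in the Gagliardo seminorm by $\|\nabla u\|_{L^p}$; density of $C_c^\infty(\overline{\mathbb R^n_+})$ in $H^{1,p}(\mathbb R^n_+)$ then promotes this to continuity of $\gamma_0$ on the whole space. For surjectivity I would exhibit a bounded right inverse $E:H^{1-\frac1p,p}(\mathbb R^{n-1})\to H^{1,p}(\mathbb R^n_+)$ with $\gamma_0E=\mathrm{id}$, for instance the mollified extension $Eg(x',x_n)=\int_{\mathbb R^{n-1}}\phi(y')\,g(x'-x_n y')\,\mathrm dy'$ with a fixed $\phi\in C_c^\infty(\mathbb R^{n-1})$ of unit mass, and verify $\|Eg\|_{H^{1,p}}\le C\|g\|_{H^{1-\frac1p,p}}$ by differentiating under the integral sign and re-expressing the normal-derivative term via the Gagliardo norm of $g$. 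Transferring $E$ back through $\Phi_j^{-1}$ and reassembling with $\{\chi_j\}$ yields a bounded right inverse on $\Omega$, whence $\gamma_0$ is onto.

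The main obstacle is the merely Lipschitz regularity of $\Gamma$: one cannot use smooth charts, and the whole scheme hinges on the invariance of the fractional space $H^{1-\frac1p,p}$ (order in $[0,1)$) under bi-Lipschitz maps, which is the most delicate ingredient and is where the cited works \cite{E.G,LME,GG07,SEM} do the substantive work. Together with the construction of the bounded extension operator, this is the part that requires genuine effort; in the paper the lemma is therefore quoted from those references rather than reproved.
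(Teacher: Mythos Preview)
Your proposal is a correct outline of the classical Gagliardo trace theorem, and you have anticipated the situation accurately: the paper does not prove this lemma at all but merely records it with citations to \cite{E.G,LME,GG07,SEM}. There is therefore nothing in the paper to compare your argument against beyond the fact that your sketch follows the standard route (localize, flatten by a bi-Lipschitz graph map, prove the half-space estimate and build a right inverse), which is indeed the approach of the cited references, Gagliardo's in particular.
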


    \begin{lem}\label{2lem:psi8}
    	Let $\Gamma_h^\pm$ be defined in \eqref{1eq:not}, $e^{\rho \cdot \mathbf x}$ and $\psi$ be given by \eqref{1eq:cgo} and \eqref{1eq:eta}. For sufficiently large $\tau$, it holds that
    \begin{equation}
    \left\vert \int_{\Gamma_h^\pm} e^{\rho \cdot \mathbf x}\psi(\mathbf x)\mathrm d\sigma\right\vert \lesssim \tau^{-\frac{17}{12}}.
    \end{equation}
    Throughout of the rest of this paper,  $\lesssim$ means that we only give the leading asymptotic analysis by neglecting a generic positive constant $C$ with respect to $\tau\rightarrow \infty$, where $C$ is not a function of $\tau$.
    \end{lem}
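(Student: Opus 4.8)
The plan is to split the boundary integral by H\"older's inequality and to estimate the two resulting factors independently: the factor carrying $e^{\rho\cdot\mathbf x}$ is controlled by the explicit decay recorded in Proposition \ref{1prop:enorm}, while the factor carrying the CGO corrector $\psi$ is controlled by transferring the interior decay $\|\psi\|_{H^{1,8}}=\mathcal O(\tau^{-2/3})$ from Lemma \ref{lem:31} to the boundary via the trace theorem. Concretely, on the one-dimensional arc $\Gamma_h^\pm$ I would apply H\"older's inequality with the conjugate pair $t=4$ and $t'=4/3$, obtaining
\begin{equation}\notag
\left\vert \int_{\Gamma_h^\pm} e^{\rho\cdot\mathbf x}\psi(\mathbf x)\,\mathrm d\sigma\right\vert \le \big\Vert e^{\rho\cdot\mathbf x}\big\Vert_{L^{4/3}(\Gamma_h^\pm)}\,\Vert \psi\Vert_{L^{4}(\Gamma_h^\pm)}.
\end{equation}
The specific choice of exponents is dictated by the need to make the boundary Sobolev embedding below valid while simultaneously producing exactly the exponent $-17/12$.

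For the first factor I would invoke \eqref{1eq:enormGamma} of Proposition \ref{1prop:enorm} with $t=4/3$, which yields, as $\tau\to\infty$,
\begin{equation}\notag
\big\Vert e^{\rho\cdot\mathbf x}\big\Vert_{L^{4/3}(\Gamma_h^\pm)} \le C\left(\frac{1}{\tau}+\frac{1}{\tau}e^{-\frac{2}{3}\zeta h\tau}\right)^{3/4}\lesssim \tau^{-3/4},
\end{equation}
where the exponentially small term is negligible against $\tau^{-1}$ for large $\tau$. For the second factor I would bound the boundary $L^4$-norm of $\psi$ by its interior $H^{1,8}$-norm. By the trace theorem (Lemma \ref{lem:trace thm}) with $p=8$, the restriction $\psi|_{\Gamma_h^\pm}$ lies in $H^{7/8,8}(\Gamma_h^\pm)$ with $\Vert \psi|_{\Gamma_h^\pm}\Vert_{H^{7/8,8}(\Gamma_h^\pm)}\le C\Vert\psi\Vert_{H^{1,8}(S_h)}$, the constant $C$ depending only on $S_h$ and not on $\tau$. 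Since $\Gamma_h^\pm$ is a bounded segment and $\tfrac78\cdot 8=7>1$, the embedding $H^{7/8,8}(\Gamma_h^\pm)\hookrightarrow L^{4}(\Gamma_h^\pm)$ holds, whence $\Vert\psi\Vert_{L^{4}(\Gamma_h^\pm)}\le C\Vert\psi\Vert_{H^{1,8}(S_h)}$; combining this with \eqref{1eq:psi} from Lemma \ref{lem:31} gives $\Vert\psi\Vert_{L^{4}(\Gamma_h^\pm)}=\mathcal O(\tau^{-2/3})$. Multiplying the two bounds then produces $\tau^{-3/4}\cdot\tau^{-2/3}=\tau^{-17/12}$, which is the claimed estimate.

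The routine part is the exponent bookkeeping; the step requiring the most care is the third one, namely transferring the interior $H^{1,8}$-decay of the CGO corrector $\psi$ to the boundary while preserving the rate $\tau^{-2/3}$. One must verify that the trace constant and the embedding constant are genuinely independent of $\tau$ (so that all the $\tau$-dependence is carried by \eqref{1eq:psi}), and that the fractional trace space $H^{7/8,8}(\Gamma_h^\pm)$ indeed embeds into $L^4(\Gamma_h^\pm)$ on the bounded one-dimensional arc. I expect this to be the main technical obstacle; the remaining estimate for $e^{\rho\cdot\mathbf x}$ is a direct application of Proposition \ref{1prop:enorm}.
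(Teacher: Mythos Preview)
Your argument is correct and arrives at the same bound, but the route differs from the paper's in a noteworthy way. The paper first performs the substitution $\mathbf y=\tau\mathbf x$, picking up a factor $\tau^{-1}$ from the one-dimensional arclength, and then applies H\"older with exponents $(8/7,8)$ on the dilated segment $\Gamma_{\tau h}^\pm$; the exponential factor becomes the $\tau$-independent quantity $\|e^{-\mathbf d\cdot\mathbf y}\|_{L^{8/7}(\Gamma^\pm)}$, while $\|\psi(\mathbf y/\tau)\|_{L^8}$ is pushed up to $\|\psi(\cdot/\tau)\|_{H^{1,8}(\mathcal K)}\le \tau^{1/4}\|\psi\|_{H^{1,8}}=\mathcal O(\tau^{-5/12})$, giving $\tau^{-1}\cdot\tau^{-5/12}=\tau^{-17/12}$. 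You instead stay on the fixed domain, use H\"older with $(4/3,4)$, read off $\|e^{\rho\cdot\mathbf x}\|_{L^{4/3}(\Gamma_h^\pm)}\lesssim\tau^{-3/4}$ from Proposition~\ref{1prop:enorm}, and control $\|\psi\|_{L^4(\Gamma_h^\pm)}$ via the trace $H^{1,8}(S_h)\to H^{7/8,8}(\Gamma_h^\pm)\hookrightarrow L^4(\Gamma_h^\pm)$, with constants independent of $\tau$ because $S_h$ is fixed. Your approach is slightly cleaner in that it sidesteps any question about the $\tau$-dependence of trace constants on the dilated domain $S_{\tau h}$; the paper's approach, on the other hand, is the template it reuses for several later estimates (e.g.\ \eqref{2eq:u0K}, \eqref{3eq:u0C}), so the scaling technique is worth knowing.
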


    \begin{proof}
    Taking $\mathbf y=\tau \mathbf x $, then  using H\"older inequality and Lemma \ref{lem:trace thm}, one has
    	\begin{align}\label{2eq:etagamma}
    	\int_{\Gamma_h^\pm}\vert e^{\rho \cdot \mathbf x}\vert&\vert \psi(\mathbf x)\vert \mathrm d\sigma
    	\lesssim \frac{1}{\tau}\|e^{-\mathbf d\cdot \mathbf y}\|_{L^{\frac{8}{7}}(\Gamma_{\tau h}^\pm)}\left\|\psi\left(\frac{\mathbf y}{\tau}\right)\right\|_{L^8(\Gamma_{\tau h}^\pm)} \\
    	&
    	\lesssim \frac{1}{\tau}\|e^{-\mathbf d\cdot \mathbf y}\|_{L^{\frac{8}{7}}(\Gamma^\pm)}\left\|\psi\left(\frac{\mathbf y}{\tau}\right)\right\|_{H^{1,8}(S_{\tau h})}  \lesssim \frac{1}{\tau}\|e^{-\mathbf d\cdot \mathbf y}\|_{L^{\frac{8}{7}}(\Gamma^\pm)}\left\|\psi\left(\frac{\mathbf y}{\tau}\right)\right\|_{H^{1,8}(\mathcal K)},\notag
    	\end{align}
         for sufficiently large $\tau $. We have $\frac{1}{\tau}<1$, and it holds that
    	\begin{align}\label{2eq:psik}
       \left\|\psi\left(\frac{\mathbf y}{\tau}\right)\right\|_{H^{1,8}(\mathcal K)}
        &\leq\tau^{\frac{1}{4}}\left \|\psi(\mathbf x)\right \|_{H^{1,8}(\mathcal K)}=\mathcal O(\tau^{-\frac{5}{12}}),\quad \mbox{as  $\tau \rightarrow \infty$. }
        \end{align}
    	Furthermore,
    	\begin{equation}\label{2eq:edy}
    	\|e^{-\mathbf d\cdot \mathbf y}\|_{L^{\frac{8}{7}}(\Gamma ^\pm )}
    	\leq \left(\int_{\Gamma^\pm}e^{-\frac{8}{7}\zeta \vert \mathbf y\vert}\mathrm d\sigma\right)^\frac{7}{8}
    	=2\left( \frac{7}{8}\frac{1}{\zeta}\right)^{\frac{7}{8}},
    	\end{equation}
    	where $\zeta$ is defined in \eqref{2eq:zeta}. Hence, $\|e^{-\mathbf d\cdot \mathbf y}\|_{L^{\frac{8}{7}}(\Gamma ^\pm )}$ is a positive constant which only depends on $\zeta$.
    	Combining  \eqref{2eq:psik} and \eqref{2eq:edy} with \eqref{2eq:etagamma}, we can prove Lemma \ref{2lem:psi8}.
    \end{proof}

     \begin{lem}\label{2lem:u0est}
     		Let $\Lambda_h,\ S_h$ be defined in \eqref{1eq:not} and $u_0(\mathbf x)$ be given by \eqref{1eq:cgo}. Then $u_0(\mathbf x)\in H^1(S_h)$ and it holds that
     	   \begin{subequations}
     	   	\begin{align}
     	   		\|u_0(\mathbf x)\|_{L^2(\Lambda_h)}&\lesssim \left(1+\tau^{-\frac{2}{3}}\right)e^{-\zeta h\tau},\label{2eq:u0lambda} \\
     	   		\label{2eq:nablau0}
     	   		\|\nabla u_0(\mathbf x)\|_{L^2(\Lambda_h)}&\lesssim (1+\tau) \left(1+\tau^{-\frac{2}{3}}\right)e^{-\zeta h\tau},\\
     	   		\label{2eq:u0alpha}
     	   		\int_{S_h}\vert \mathbf x\vert^{\alpha}\vert u_0(\mathbf x)\vert\rm d\mathbf x
     	   		&\lesssim
     	   		\tau^{-(\alpha+\frac{29}{12})}+\left(\frac{1}{\tau^{\alpha+2}}+\frac{1}{\tau}e^{-\frac{1}{2}\zeta h\tau}\right)
     	   	\end{align}
     	   \end{subequations}
     		as $\tau \rightarrow \infty$, where $\zeta$  is defined  in \eqref{2eq:zeta} and $\alpha \in (0,1)$.
%
%
%
     \end{lem}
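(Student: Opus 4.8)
The plan is to substitute the explicit form of the CGO solution, $u_0=(1+\psi(\mathbf x))e^{\rho\cdot\mathbf x}$ from \eqref{1eq:cgo}--\eqref{1eq:eta}, and to exploit two structural facts throughout: first, $\rho\cdot\rho=0$ and $\abs{\rho}=\sqrt2\,\tau$ (since $\mathbf d\perp\mathbf d^\perp$ are unit vectors); second, the pointwise bound on the arc $\Lambda_h=\mathcal K\cap\partial B_h$, where $\abs{\mathbf x}=h$ and hence, by \eqref{2eq:zeta}, $\abs{e^{\rho\cdot\mathbf x}}=e^{-\tau\mathbf d\cdot\mathbf x}\leq e^{-\zeta h\tau}$. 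The only nontrivial input on $\psi$ is its smallness $\norm{\psi}_{H^{1,8}(S_h)}=\mathcal O(\tau^{-2/3})$ from \eqref{1eq:psi}, together with the trace theorem (Lemma \ref{lem:trace thm}). That $u_0\in H^1(S_h)$ is immediate: $e^{\rho\cdot\mathbf x}$ is smooth and bounded on the bounded set $S_h$ for each fixed $\tau$, while $\psi\in H^{1,8}(S_h)\hookrightarrow H^1(S_h)$, so the product lies in $H^1(S_h)$.

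For \eqref{2eq:u0lambda} I would split $u_0=e^{\rho\cdot\mathbf x}+\psi\,e^{\rho\cdot\mathbf x}$ on $\Lambda_h$. The first piece contributes $\lesssim e^{-\zeta h\tau}$ since $\Lambda_h$ has fixed finite length; for the second I would apply the trace map $H^{1,8}(S_h)\to H^{7/8,8}(\Lambda_h)$ of Lemma \ref{lem:trace thm} followed by the one-dimensional Sobolev embedding $H^{7/8,8}(\Lambda_h)\hookrightarrow L^\infty(\Lambda_h)$ (valid as $\tfrac78\cdot8=7>1$), giving $\norm{\psi}_{L^2(\Lambda_h)}\lesssim\norm{\psi}_{H^{1,8}(S_h)}=\mathcal O(\tau^{-2/3})$. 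Multiplying by $e^{-\zeta h\tau}$ yields the claimed $(1+\tau^{-2/3})e^{-\zeta h\tau}$.

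The gradient estimate \eqref{2eq:nablau0} I would obtain from $\nabla u_0=\rho\,u_0+e^{\rho\cdot\mathbf x}\nabla\psi$. The first term is bounded by $\abs{\rho}\,\norm{u_0}_{L^2(\Lambda_h)}\lesssim\tau(1+\tau^{-2/3})e^{-\zeta h\tau}$ via \eqref{2eq:u0lambda}, which already supplies the leading factor $(1+\tau)(1+\tau^{-2/3})e^{-\zeta h\tau}$. The genuine difficulty, and the main obstacle, is the term $e^{\rho\cdot\mathbf x}\nabla\psi$: it forces a trace of $\nabla\psi$ on $\Lambda_h$, which is \emph{not} available from $\psi\in H^{1,8}$ alone. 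I would resolve this by upgrading the regularity of $\psi$ through the transport-type equation it satisfies, namely $\Delta\psi=-2\rho\cdot\nabla\psi-k^2q(1+\psi)$ (a consequence of \eqref{1eq:u_0} and $\rho\cdot\rho=0$). Since $q\in L^\infty$ and $\norm{\nabla\psi}_{L^8}\lesssim\tau^{-2/3}$, the right-hand side lies in $L^8$ with norm $\lesssim\tau\cdot\tau^{-2/3}=\tau^{1/3}$ on a bounded neighbourhood of $\Lambda_h$; because $\Lambda_h$ sits at a fixed distance $h$ from the corner and $u_0$ solves its equation throughout $\mathbb R^n$, interior elliptic ($W^{2,8}$) regularity gives $\nabla\psi\in H^{1,8}$ locally with norm $\lesssim\tau^{1/3}$, so Lemma \ref{lem:trace thm} and the same embedding yield $\norm{\nabla\psi}_{L^2(\Lambda_h)}\lesssim\tau^{1/3}$. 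Multiplying by $e^{-\zeta h\tau}$ keeps this below the leading term, establishing \eqref{2eq:nablau0}.

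For the weighted $L^1$ bound \eqref{2eq:u0alpha} I would again split $\abs{u_0}\leq\abs{e^{\rho\cdot\mathbf x}}+\abs{\psi}\abs{e^{\rho\cdot\mathbf x}}$. Passing to polar coordinates turns $\int_{S_h}\abs{\mathbf x}^\alpha\abs{e^{\rho\cdot\mathbf x}}\,\mathrm d\mathbf x$ into $\int_{\theta_m}^{\theta_M}\!\int_0^h r^{\alpha+1}e^{-\tau r\,\mathbf d\cdot\hat{\mathbf x}}\,\mathrm dr\,\mathrm d\theta$, whose inner integral is evaluated by \eqref{1eq:gamma} of Proposition \ref{1prop:gamma} with $\mu=\tau\,\mathbf d\cdot\hat{\mathbf x}$, $\Re\mu\geq\zeta\tau$, producing exactly $\tau^{-(\alpha+2)}+\tau^{-1}e^{-\frac12\zeta h\tau}$. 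For the second piece I would use Hölder with exponents $8$ and $\tfrac87$ together with $\norm{\psi}_{L^8(S_h)}\lesssim\tau^{-2/3}$, and the rescaling $\mathbf y=\tau\mathbf x$ to get $\norm{\abs{\mathbf x}^\alpha e^{\rho\cdot\mathbf x}}_{L^{8/7}(S_h)}\lesssim\tau^{-\alpha-7/4}$, the rescaled integral $\int_{\mathcal K}\abs{\mathbf y}^{8\alpha/7}e^{-\frac87\zeta\abs{\mathbf y}}\,\mathrm d\mathbf y$ being a finite constant; the product is $\tau^{-\alpha-7/4}\cdot\tau^{-2/3}=\tau^{-(\alpha+29/12)}$. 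Summing the two pieces gives \eqref{2eq:u0alpha}, and I expect the control of $\nabla\psi$ on $\Lambda_h$ in the gradient estimate to be the only step requiring more than routine computation.
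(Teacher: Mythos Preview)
Your argument is correct and follows the paper's decomposition closely: split $u_0=e^{\rho\cdot\mathbf x}+\psi e^{\rho\cdot\mathbf x}$, bound the exponential on $\Lambda_h$ pointwise via \eqref{2eq:zeta}, control $\psi$ on $\Lambda_h$ through the trace theorem and \eqref{1eq:psi}, and handle \eqref{2eq:u0alpha} by the rescaling $\mathbf y=\tau\mathbf x$ combined with H\"older in the pair $(8,\tfrac87)$---exactly the paper's route, with only cosmetic differences (you embed the trace into $L^\infty$ where the paper uses an $L^4\times L^4$ H\"older step).

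The one point where you go beyond the paper is the term $e^{\rho\cdot\mathbf x}\nabla\psi$ in \eqref{2eq:nablau0}. The paper simply writes $\|\nabla u_0\|_{L^2(\Lambda_h)}\le\sqrt2\,\tau\|u_0\|_{L^2(\Lambda_h)}+\|e^{\rho\cdot\mathbf x}\|_{L^4(\Lambda_h)}\|\nabla\psi\|_{L^4(\Lambda_h)}$ and passes to the final bound without explaining why $\nabla\psi$ has a trace on $\Lambda_h$ (the stated regularity $\psi\in H^{1,8}$ does not by itself give one). You correctly flag this and close the gap via interior $W^{2,8}$ regularity for the equation $\Delta\psi=-2\rho\cdot\nabla\psi-k^2\tilde q(1+\psi)$, which is legitimate since $u_0$ solves \eqref{1eq:u_0} in all of $\mathbb R^2$ and $\Lambda_h$ lies at fixed positive distance from the vertex; the resulting bound $\|\nabla\psi\|_{L^4(\Lambda_h)}\lesssim\tau^{1/3}$ is then absorbed by the exponential factor. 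This is a genuine improvement in rigor over the paper's presentation, while the overall strategy is the same.
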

     \begin{proof} Using  polar coordinates transformation, \eqref{1eq:Ir} and  \eqref{2eq:zeta}, we can obtain that
     \begin{equation}\label{2eq:eeta}
     	\|e^{\rho \cdot \mathbf x}\|_{ L^{t}(\Lambda_{h})} \lesssim e^{-\zeta h\tau}.
     \end{equation}
     where $\rho$  is defined  in \eqref{1eq:eta} and $t$ is a positive constant.

     According to \eqref{1eq:psi} and  Lemma \ref{lem:trace thm}, for sufficient  large $\tau$, one can show that
     \begin{align}\label{1eq:psi est}
     	\|\psi(\mathbf x)\|_{L^{4}(\Lambda_{h})}
     	\leq C \| \psi(\mathbf x)\|_{H^{1,8}(S_h)}=O(\tau^{-\frac{2}{3}}),
     \end{align}
     where $C$ is a positive constant, which is not a function of $\tau$.

     By virtue of \eqref{1eq:psi est} and  H\"older inequality, it can be directly verified that
     	\begin{equation}\label{2eq:I42}
     	\begin{aligned}
     	\|u_{0}\|_{ L^{2}(\Lambda_h)}
     	&\leq\|e^{\rho \cdot \mathbf x}\|_{ L^{2}(\Lambda_{h})}+\|e^{\rho \cdot \mathbf x}\|_{ L^{4}(\Lambda_{h})}\|\psi(\mathbf x)\|_{ L^{4}(\Lambda_{h})}\\
     	&\lesssim \left(1+\tau^{-\frac{2}{3}}\right)e^{-\zeta h\tau},\quad \mbox{as  $\tau \rightarrow \infty$. }
     	\end{aligned}
     	\end{equation}
    Similarly, using Cauchy-Schwarz inequality, \eqref{2eq:u0lambda} and Proposition \ref{1prop:gamma}, we have
     	\begin{equation}\label{2eq:I43}
     	\begin{aligned}
     	\|\nabla u_{0}(\mathbf x)\|_{L^{2}(\Lambda_h)}
     	&\leq \sqrt{2}\tau \|u_{0}\|_{ L^{2}(\Lambda_h)}+\|e^{\rho \cdot \mathbf x}\|_{ L^{4}(\Lambda_h)} \|\nabla \psi(\mathbf x)\|_{ L^{4}(\Lambda_h)}\\
     	&\lesssim(1+\tau)(1+\tau^{-\frac{2}{3}})e^{-\zeta h\tau},\quad \mbox{as  $\tau \rightarrow \infty$. }
     	\end{aligned}
     	\end{equation}
     	Moreover, by using  Cauchy-Schwarz inequality, we know that
     	\begin{equation}\label{2eq:xalpha}
     	\begin{aligned}
     	\int_{S_{h}}\vert \mathbf x\vert^{\alpha}\vert u_{0} \vert\mathrm{d}\mathbf x
     	&\leq\int_{S_{h}}\vert \mathbf x\vert^{\alpha}\vert e^{\rho \cdot \mathbf{x}} \vert \mathrm{d}\mathbf x+\int_{S_{h}}\vert \mathbf x\vert^{\alpha}\vert e^{\rho \cdot \mathbf{x}}\vert\vert \psi(\mathbf x)\vert\mathrm{d}\mathbf x.\\
     	\end{aligned}
     	\end{equation}
     	Using polar coordinates transformation  and Proposition \ref{1prop:gamma}, we can deduce that
     	\begin{equation}\label{2eq:I111}
     	\int_{S_{h}}\vert \mathbf x\vert^{\alpha}\vert e^{\rho \cdot \mathbf{x}} \vert \mathrm{d}\mathbf x
     	\lesssim \left(\frac{1}{\tau^{\alpha+2}}+\frac{1}{\tau}\right)e^{-\frac{1}{2}\zeta h\tau },\quad \mbox{as  $\tau \rightarrow \infty$. }
     	\end{equation}
        Next, by letting $\mathbf y=\tau \mathbf x$ and H\"older inequality, it can be calculated that
        \begin{equation}\label{2eq:u0K}
        \begin{aligned}
        \int_{S_{h}}\vert \mathbf x\vert^{\alpha}\vert e^{\rho \cdot \mathbf{x}}\vert\vert \psi(\mathbf x)\vert\mathrm{d}\mathbf x
        &\leq \frac{1}{\tau^{\alpha+2}}\int_{\mathcal K}\vert \mathbf y\vert^{\alpha}\vert e^{-\mathbf d\cdot \mathbf y}\vert \left\vert \psi\left(\frac{\mathbf y}{\tau}\right)\right\vert\mathrm d\mathbf x\\
        &\leq \frac{1}{\tau^{\alpha+2}}\|\vert \mathbf y\vert^{\alpha}\vert e^{-\mathbf d\cdot \mathbf y}\vert \|_{L^{\frac{8}{7}}(\mathcal K)}\left\|\psi\left(\frac{\mathbf y}{\tau}\right)\right\|_{L^{8}(\mathcal K)}.
        \end{aligned}
        \end{equation}
        With the help of variable substitution and \eqref{1eq:psi}, we can calculate that
        \begin{equation}\label{2eq:psiL8K}
        \left\|\psi\left(\frac{\mathbf y}{\tau}\right)\right\|_{L^8(\mathcal K)}=\tau^{\frac{1}{4}}\|\psi\left(\mathbf x\right)\|_{L^8(\mathcal K)}=\mathcal O(\tau^{-\frac{5}{12}}),\quad \mbox{as  $\tau \rightarrow \infty$. }
        \end{equation}	
        Similar to \eqref{2eq:edy}, by using polar coordinates transformation , we have
        $
        \| \vert \mathbf y\vert^\alpha \vert e^{-\mathbf d\cdot \mathbf y}\vert \|_{L^\frac{8}{7}(\mathcal K)}
        $
        is a positive constant and not a function of $\tau$.
        Therefore, combining  \eqref{2eq:psiL8K}, \eqref{2eq:u0K} and \eqref{2eq:I111} with  \eqref{2eq:xalpha}, we have \eqref{2eq:u0alpha}.

        The proof is complete.
     \end{proof}

     Now we are in a position to prove Theorem \ref{thm:2D}.

    \begin{proof}[{\bf Proof of Theorem \ref{thm:2D}}]
    	By Green's formula \eqref{2eq:2green} and \eqref{eq:pde sys 2},  the following integral equality holds
    	\begin{equation}\label{1eq:green}
    	\begin{aligned}
    	\int_{\Lambda_{h}}(w-v)\partial_{\nu}u_{0}-u_{0}\partial_{\nu}(w-v)\mathrm{d}\sigma-\int_{\Gamma_h^{\pm}}\eta u_0v\mathrm d\sigma
    	&=k^{2}\int_{S_{h}}(q-1)vu_{0}\mathrm{d}\mathbf x.
    	\end{aligned}
    	\end{equation}
    	Denote
    	$$
    	f_{j}=(q-1)v_{j}.
    	$$
    Since $q\in H^2(\overline S_h)$, by Sobolev embedding property, one has $q\in C^{\alpha}(\overline S_h)$ where $\alpha \in (0,1]$. Clearly, $v_{j}\in C^{\alpha}(\overline S_h)$, hence $f_{j}\in C^{\alpha}(\overline S_h)$. According $v_j\in C^{\alpha},\ \eta\in C^{\alpha}$, we have the expansion
    	\begin{equation}\label{1eq:fj}
    	\begin{aligned}
    	f_{j}&=f_{j}(\mathbf{0})+\delta f_{j},\ \vert \delta f_{j}\vert\leq \| f_{j}\|_{C^{\alpha}( {\overline S}_h)}\vert \mathbf{x} \vert^{\alpha},\\
    	v_j&=v_j(\mathbf 0)+\delta v_j,\ \vert\delta v_j\vert \leq\|v_j\|_{C^{\alpha}(\overline S_h)}\vert\mathbf x\vert^{\alpha},\\
    	\eta&=\eta(\mathbf 0)+\delta\eta,\  \vert\delta\eta\vert\leq\|\eta\|_{C^{\alpha}(\overline{ \Gamma_h^\pm }  )}\vert\mathbf x\vert^{\alpha}.
    	\end{aligned}
    	\end{equation}
    	
  By virtue of \eqref{1eq:fj} and  \eqref{1eq:cgo}, it yields that
    	\begin{equation}\label{2eq:fj}
    	\begin{aligned}
    	&k^2\int_{S_{h}}(q-1)vu_0\mathrm d\mathbf x =-\sum_{m=1}^3 I_{m},\quad \int_{\Gamma_h^{\pm}}\eta u_0v\mathrm d\sigma=I-\sum_{m=4}^9I_m,
    	\end{aligned}
    	\end{equation}
    	where
    	\begin{align*}
    	I_{1}&=-k^{2}\int_{S_{h}}(q-1)(v-v_{j})u_{0}\mathrm d\mathbf x
    	,\quad
    	I_{2}=-\int_{S_{h}}\delta f_{j}u_{0}\mathrm d\mathbf x,\quad
    	\\
    	I_{3}&=-f_{j}(\mathbf{0})\int_{S_{h}}u_0\mathrm{d}\mathbf x,\quad
    	I_4=-\eta(\mathbf 0)\int_{\Gamma_h^{\pm}}(v-v_j)u_0\mathrm{d}\sigma,\\
    	I_5&=-\int_{\Gamma_h^{\pm}}\delta \eta(v-v_j)u_0\mathrm{d}\sigma,\quad
    	I_6=-\eta(\mathbf 0)v_j(\mathbf 0)\int_{\Gamma_h^{\pm}}e^{\rho\cdot \mathbf x}\psi(\mathbf x)\mathrm{d}\sigma,\\
    	I_{7}&=-\eta(\mathbf 0)\int_{\Gamma_h^{\pm}}\delta v_ju_0\mathrm d\sigma,\quad
    	I_8=-v_j(\mathbf 0) \int_{\Gamma_h^{\pm}}\delta \eta u_0\mathrm d\sigma,\\
    	I_9&=-\int_{\Gamma_h^{\pm}}\delta \eta\delta v_j u_0\mathrm d\sigma,\quad
    	I=\eta(\mathbf 0)v_j(\mathbf 0)\int_{\Gamma_h^{\pm}}e^{\rho\cdot\mathbf x}\mathrm d\sigma.
    	\end{align*}
    Substituting  \eqref{2eq:fj} into \eqref{1eq:green},  we have the following integral identity
    	\begin{align}\notag
    		I=\sum_{m=1}^9I_m+J_1+J_2,
    	\end{align}
    	where
    	\begin{equation}\label{2eq:chaifen}
    	\begin{aligned}
    	J_{1}&=\int_{\Lambda_{h}}(w-v)\partial_{\nu}u_{0}\mathrm d\sigma,\quad
    	J_{2}=-\int_{\Lambda_{h}} u_{0}\partial_{\nu}(w-v)\mathrm d\sigma.
    	\end{aligned}
    	\end{equation}
    	Therefore, it yields that
    	\begin{equation}\label{2eq:I}
    	\vert I\vert\leq \sum_{m=1}^9\vert I_m\vert+\vert J_1\vert+\vert J_2\vert.
    	\end{equation}
    	
    	In the following, we give detailed asymptotic estimates of $I_m,m=1,\cdots,9$ and $J_j,\ j=1,2$  as $\tau\to \infty$, separately.
    	With the help of Proposition \ref{1prop:enorm}, H\"older inequality and \eqref{1eq:psi}, it arrives at
    	\begin{equation}\label{2eq:I1}
    	\begin{aligned}
    	\vert I_{1} \vert 
    	&\lesssim   \|v-v_j\|_{L^2(S_h)}\left(\|e^{\rho\cdot \mathbf x}\|_{L^2(S_h)}+\|	e^{\rho\cdot \mathbf x}\psi(\mathbf x)\|_{L^2(S_h)}\right)\\
    	&\lesssim \|v-v_j\|_{L^2(S_h)}\left(\|e^{\rho\cdot \mathbf x}\|_{L^2(S_h)}+\|	e^{\rho\cdot \mathbf x}\|_{L^4(S_h)}\|\psi(\mathbf x)\|_{L^4(S_h)}\right)\\
    	&\lesssim j^{-\beta}\left[\left(\frac{1}{\tau^2}+\frac{1}{\tau}e^{-\zeta h\tau}\right)^{\frac{1}{2}}+\left(\frac{1}{\tau^2}+\frac{1}{\tau}e^{-2\zeta h\tau}\right)^{\frac{1}{4}}\tau^{-\frac{2}{3}} \right]
    	\end{aligned}
    	\end{equation}
    	as $\tau \rightarrow\infty$.

    	By virtue of (\ref{1eq:fj}), it yields that
    	\begin{equation}\label{2eq:I21}
    	\vert I_{2} \vert \leq \|f_{j} \|_{C^{\alpha}( \overline S_h)}\int_{S_{h}}\vert \mathbf x\vert^{\alpha}\vert u_{0} \vert\mathrm d\mathbf x,
    	\end{equation}
    where
    	\begin{equation}\label{2eq:fj1}
    	\begin{aligned}
    	\|f_{j}\|_{C^{\alpha}
    	(S_h)}&
    	\leq k^2\left(\|q\|_{C^{\alpha}(\overline S_h)}\sup_{S_h}\vert v_{j} \vert+\|v_{j}\|_{C^{\alpha}( \overline S_h)}\sup_{S_h}\vert q-1\vert\right).
    	\end{aligned}
    	\end{equation}
    	Using the property of compact embedding of H\"older spaces, we can derive that
    	\begin{equation}\label{2eq:calpha}
    	\| v_{j}\|_{C^{\alpha}}\leq {\mathrm {diam} }\left(S_{h}\right)^{1-\alpha}\|v_{j}\|_{C^{1}(S_h)},
    	\end{equation}
    	where diam$\left(S_{h}\right)$ is the diameter of $S_{h}$. By direct computations, we obtain
    	\begin{equation}\label{2eq:vjc1}
    	\|v_{j}\|_{C^{1}}\leq \sqrt{2\pi}(1+k)\|g\|_{L^{2}(\mathbb S^{1})}.
    	\end{equation}
    	Furthermore, by Cauchy-Schwarz inequality,  we also can deduce that
    	\begin{equation}\label{2eq:vj}
    	\vert v_{j}\vert\leq\sqrt{2\pi}\|g\|_{L^{2}(\mathbb S^{1})}.
    	\end{equation}
  Due to \eqref{1eq:herg},     by using the fact that $q\in C^{\alpha}(\overline S_h)$,  substituting  (\ref{2eq:calpha}), (\ref{2eq:vjc1}), and (\ref{2eq:vj}) into (\ref{2eq:fj1}), we have
        \begin{equation}\label{2eq:fjc}
        \|f_{j}\|_{C^{\alpha}(\overline S_h)}\lesssim j^{\gamma },\quad \|v_j\|_{C^\alpha(\overline S_h)}\lesssim j^{\gamma},
        \end{equation}
        where $\gamma$ is a given positive constant  defined in \eqref{1eq:herg}.  Substituting \eqref{2eq:u0alpha} and \eqref{2eq:fjc} into \eqref{2eq:I21}, we can deduce that
        \begin{equation}\label{2eq:I2}
        \vert I_{2} \vert \lesssim j^{\gamma }\left[\tau^{-(\alpha+\frac{29}{12})}+\left(\frac{1}{\tau^{\alpha+2}}+\frac{1}{\tau}e^{-\frac{1}{2}\zeta h\tau}\right)\right]
        \end{equation}
        as $\tau \rightarrow \infty$.

        Using Cauchy-Schwarz inequality, it can be easily calculated that
       \begin{equation}\label{2eq:I31}
       \vert I_{3} \vert
       \lesssim \int_{S_h}\vert e^{\rho \cdot \mathbf x}\vert\mathrm d\mathbf x+\int_{S_h}\vert e^{\rho \cdot \mathbf x}\psi(\mathbf x)\vert \mathrm d\mathbf x\lesssim \int_{S_h}\vert e^{\rho \cdot \mathbf x}\vert \mathrm d\mathbf x+\int_{\mathcal K}\vert e^{\rho \cdot \mathbf x}\psi(\mathbf x)\vert\mathrm d\mathbf x.
       \end{equation}
 By integral substitution and using \eqref{2eq:psiL8K},  we obtain that
        \begin{equation}\label{2eq:I33}
        \begin{aligned}
        \int_{\mathcal K}\vert e^{\rho \cdot \mathbf x}\psi(\mathbf x)\vert\mathrm d\mathbf x&=\frac{1}{\tau^2}\int_{\mathcal K}e^{-\mathbf d\cdot \mathbf y}\left \vert \psi\left(\frac{\mathbf y}{\tau}\right)\right \vert\mathrm d\mathbf y
        \leq \frac{1}{\tau^2} \|e^{-\mathbf d\cdot \mathbf y}\|_{L^{\frac{8}{7}}(\mathcal K)}\|\psi\left(\frac{\mathbf y}{\tau}\right)\|_{L^{8}(\mathcal K)}\\
        &\lesssim \tau^{-\frac{29}{12}},\quad \mbox{as  $\tau \rightarrow \infty$. }
        \end{aligned}
        	\end{equation}
      With the help of Proposition \ref{2prop:tau2}, substituting  (\ref{2eq:I33}) into (\ref{2eq:I31}), we can derive that
        \begin{equation}\label{2eq:I3}
        \vert I_{3} \vert \lesssim \tau^{-\frac{29}{12}}+\left(\frac{1}{\tau^2}+\frac{1}{\tau}e^{-\frac{1}{2}\zeta h\tau}\right), \mbox{as  $\tau \rightarrow \infty$. }
        \end{equation}

        Using Cauchy-Schwarz inequality, the trace theorem and H\"older inequality, we have
        \begin{equation}\label{2eq:I4}
        \begin{aligned}
        \vert I_4\vert
        &\lesssim \|v-v_j\|_{L^2(\Gamma_h^{\pm})}(\|e^{\rho\cdot \mathbf x}\|_{L^2(\Gamma_h^{\pm})}+\|e^{\rho\cdot \mathbf x}\|_{L^4(\Gamma_h^{\pm})}\|\psi(\mathbf x)\|_{L^4(\Gamma_h^{\pm})})\\
        &\lesssim j^{-\beta}\left[\left(\frac{1}{\tau}+\frac{1}{\tau}e^{-\zeta h\tau}\right)^{\frac{1}{2}}+\left(\frac{1}{\tau}+\frac{1}{\tau}e^{-2\zeta h\tau}\right)^{\frac{1}{4}}\tau^{-\frac{2}{3}}\right], 
        \end{aligned}
        \end{equation}
as  $\tau \rightarrow \infty$.  Similarly, by virtue of Cauchy-Schwarz inequality, the trace theorem and H\"older inequality, it can be calculated that
        \begin{equation}
        \begin{aligned}\notag
        \vert I_5\vert
        &\lesssim \|v-v_j\|_{H^1(S_h)}\left(\|e^{\rho\cdot \mathbf x}\vert \mathbf  x\vert^{\alpha}\|_{L^2(\Gamma_h^{\pm})}+\|e^{\rho\cdot \mathbf x}\vert \mathbf  x\vert^{\alpha}\|_{L^4(\Gamma_h^{\pm})}\|\psi(\mathbf x)\|_{L^4(\Gamma_h^{\pm})}\right)\\
        &\lesssim j^{-\beta}\left[ \left(\frac{1}{\tau^{(2\alpha+1)}}+\frac{1}{\tau}e^{-\zeta h\tau}\right)^\frac{1}{2}+\left(\frac{1}{\tau^{(4\alpha+1)}}+\frac{1}{\tau}e^{-2\zeta h\tau} \right)^{\frac{1}{4}}\tau^{-\frac{2}{3}}\right], 
        \end{aligned}
        \end{equation}
        as  $\tau \rightarrow \infty$.  
        By using Lemma \ref{2lem:psi8}, one can show that
        \begin{equation}\label{2eq:I63}
       \begin{aligned}
       I_6&\lesssim \int_{\Gamma_h^\pm}\vert e^{\rho \cdot \mathbf x}\psi(\mathbf x)\vert \mathrm d\mathbf x\lesssim \tau^{-\frac{17}{12}}, \quad \mbox{as  $\tau \rightarrow \infty$. }
       \end{aligned}
       \end{equation}
       Using \eqref{1eq:fj}, \eqref{2eq:fjc}, and Proposition \ref{1prop:gamma}, we have the following inequality
       \begin{align}\label{2eq:I7}
       \vert I_7\vert
       &\lesssim j^{\gamma}\left[\int_{\Gamma_{h}^{\pm}}\vert \mathbf x\vert^{\alpha}\vert e^{\rho \cdot \mathbf x}\vert\mathrm d\sigma+\|\vert \mathbf x\|^{\alpha}\vert e^{\rho\cdot \mathbf x}\vert\|_{L^2(\Gamma_h^{\pm})}\|\psi(\mathbf x)\|_{L^2(\Gamma_h^{\pm})} \right] \nonumber \\
       &\lesssim j^{\gamma}\left[\left(\frac{1}{\tau^{\alpha+1}}+\frac{1}{\tau}e^{-\frac{1}{2}\zeta h\tau}\right)+\left(\frac{1}{\tau^{2\alpha+1}}+\frac{1}{\tau}e^{-\zeta h\tau}\right)^{\frac{1}{2}}\tau^{-\frac{2}{3}}\right], 
       \end{align}
    as  $\tau \rightarrow \infty$.  
       According to \eqref{2eq:I7}, we can derive that
       \begin{subequations}
       \begin{align}\label{2eq:I8}
     &  \vert I_8\vert\lesssim \left(\frac{1}{\tau^{\alpha+1}}+\frac{1}{\tau}e^{-\frac{1}{2}\zeta h\tau}\right)+\left(\frac{1}{\tau^{2\alpha+1}}+\frac{1}{\tau}e^{-\zeta h\tau}\right)^{\frac{1}{2}}\tau^{-\frac{2}{3}},\\
     &  \vert I_9\vert\lesssim
       j^{\gamma}\left[\left(\frac{1}{\tau^{2\alpha+1}}+\frac{1}{\tau}e^{-\frac{1}{2}\zeta h\tau}\right)+\left(\frac{1}{\tau^{4\alpha+1}}+\frac{1}{\tau}e^{-\zeta h\tau}\right)^{\frac{1}{2}}\tau^{-\frac{2}{3}}\right],  \label{2eq:I9}
       \end{align}
\end{subequations}
as  $\tau \rightarrow \infty$.  By the Cauchy-Schwarz inequality and the trace theorem,  we deduce that
        \begin{align}
        \vert J_1\vert&
        \leq C\|u_{0}\|_{H^{1}(\Lambda_{h})} \|w-v\|_{H^{1}(\Lambda_{h})} \label{2eq:J11} \\
        &\lesssim \|u_{0}\|_{H^{1}(\Lambda_{h})} \nonumber
        \end{align}
        as $\tau \rightarrow \infty$, where $C$ is a positive constant arising from the trace theorem.  Hence, by virtue of  \eqref{2eq:u0lambda} and \eqref{2eq:nablau0}, from \eqref{2eq:J11}, it is readily known that
        \begin{equation}\label{2eq:J1}
        \begin{aligned}
        \vert J_1 \vert \lesssim  (1+\tau)(1+\tau^{-\frac{2}{3}})e^{-\zeta h\tau}
        \end{aligned}
        \end{equation}
        as $\tau \rightarrow \infty$, where $\zeta$ is a positive constant given in \eqref{2eq:zeta}.

  Similarly, using Cauchy-Schwarz inequality, the trace theorem and \eqref{2eq:nablau0}, we can obtain that
       \begin{align}
       \vert J_2 \vert &\leq \|\partial _{\nu} u_0\|_{ L^{2}(\Lambda_h)} \|w-v\|_{ L^{2}(\Lambda_h)}
       \leq C \|\partial _{\nu} u_0\|_{ L^{2}(\Lambda_h)} \|w-v\|_{ H^{1}(S_{h})} \label{2eq:J2} \\
       &\lesssim \|\nabla u_{0}\|_{ L^{2}(\Lambda_h)} \lesssim (1+\tau)(1+\tau^{-\frac{2}{3}})e^{-\zeta h\tau}. \nonumber
       \end{align}

       Substituting (\ref{2eq:I1}), (\ref{2eq:I2}), \eqref{2eq:I3}, (\ref{2eq:J1}), and  (\ref{2eq:J2}) into (\ref{2eq:I}), by virtue of (\ref{1eq:tau2}), we derive that
       \begin{align}
         \left(\frac{C_{S_h}}{\tau}-\frac{1}{\tau}e^{-\frac{1}{2}\zeta h\tau}\right)&\vert \eta(\mathbf 0)v_j(\mathbf 0)\vert
         \lesssim
         j^{-\beta}\left[\left(\frac{1}{\tau^2}+\frac{1}{\tau}e^{-\zeta h\tau}\right)^{\frac{1}{2}}+\left(\frac{1}{\tau^2}+\frac{1}{\tau}e^{-2\zeta h\tau}\right)^{\frac{1}{4}}\tau^{-\frac{2}{3}} \right ] \nonumber \\
         & + j^{\gamma }\left[\tau^{-(\alpha+\frac{29}{12})}+\left(\frac{1}{\tau^{\alpha+2}}+\frac{1}{\tau}e^{-\frac{1}{2}\zeta h\tau}\right)\right] \nonumber  \\
         &+j^{-\beta}\left[\left(\frac{1}{\tau}+\frac{1}{\tau}e^{-\zeta h\tau}\right)^{\frac{1}{2}}+\left(\frac{1}{\tau}+\frac{1}{\tau}e^{-2\zeta h\tau}\right)^{\frac{1}{4}}\tau^{-\frac{2}{3}}\right] \label{2D:1} \\
         &+j^{-\beta}\left[ \left(\frac{1}{\tau^{(2\alpha+1)}}+\frac{1}{\tau}e^{-\zeta h\tau}\right)^\frac{1}{2}+\left(\frac{1}{\tau^{(4\alpha+1)}}+\frac{1}{\tau}e^{-2\zeta h\tau} \right)^{\frac{1}{4}}\tau^{-\frac{2}{3}}\right] \nonumber \\
         &+(j^{\gamma}+1)\left[\left(\frac{1}{\tau^{\alpha+1}}+\frac{1}{\tau}e^{-\frac{1}{2}\zeta h\tau}\right)+\left(\frac{1}{\tau^{2\alpha+1}}+\frac{1}{\tau}e^{-\zeta h\tau}\right)^{\frac{1}{2}}\tau^{-\frac{2}{3}}\right] \nonumber \\
         &+j^{\gamma}\left[\left(\frac{1}{\tau^{2\alpha+1}}+\frac{1}{\tau}e^{-\frac{1}{2}\zeta h\tau}\right)+\left(\frac{1}{\tau^{4\alpha+1}}+\frac{1}{\tau}e^{-\zeta h\tau}\right)^{\frac{1}{2}}\tau^{-\frac{2}{3}}\right]  \nonumber \\
         &+(1+\tau)(1+\tau^{-\frac{2}{3}})e^{-\zeta h\tau}+
         \tau^{-\frac{29}{12}}+\left(\frac{1}{\tau^2}+\frac{1}{\tau}e^{-\frac{1}{2}\zeta h\tau}\right)+\tau^{-\frac{17}{12}} \nonumber
       	\end{align}
       as $\tau \rightarrow \infty$, where $C_{S_h}$ is a positive constant given in \eqref{1eq:tau2}.
       Multiplying $\tau$ on both sides of  (\ref{2D:1})  and letting  $\tau=j^s$, where  $s>0$, it can be derived that
       \begin{equation}\label{2D:2}
       \begin{aligned}
       \left(C_h-e^{-\frac{1}{2}\zeta hj^s}\right)\vert \eta(\mathbf 0)v_j(\mathbf 0)\vert&\lesssim j^{-\beta+s}+j^{\gamma-(\alpha+1)s}+j^{-\beta+\frac{1}{2}s}+j^{-\beta+(-\alpha +\frac{1}{2})s}\\
       &+j^{\gamma-\alpha s}+j^{-\frac{13}{24}s}+j^{-\frac{17}{12}s},
       \end{aligned}
       \end{equation}
       as $\tau \rightarrow \infty$. Under the assumption \eqref{eq:assump1}, we can choose $s \in (\gamma/ \alpha , \beta )$.
       Hence in \eqref{2D:2}, let $j\rightarrow \infty$ it is readily to know  that
       $$
       \lim_{j \rightarrow \infty } \left\vert \eta(\mathbf 0)v_j(\mathbf 0)\right\vert=0.$$
       Since $\eta(\mathbf 0)\not = 0$, one has  $\lim_{j \rightarrow \infty }\vert v_j(\mathbf 0)\vert=0 $.  Using \eqref{1eq:herg} and the integral mean value theorem, we can obtain \eqref{1eq:v0}.


The proof is complete.
       \end{proof}


       \begin{proof}[\bf Proof of Theorem \ref{2D:delta}]


     Due to $q\in H^2(\overline S_h)$,  using the Sobolev embedding property, we  know that $q\in C^\alpha(\overline{S_h})$ with $\alpha\in (0,1]$.  Under the assumption $v\in C^\alpha(\overline S_h)$ ($\alpha\in (0,1]$), it readily has $f(\mathbf x):=(q(\mathbf x)-1)v(\mathbf x) \in C^{\alpha} (\overline{S_h})$. Hence we  have the expansion of $f(\mathbf x),\ \eta$ and $v(\mathbf x)$ near the origin as follows
       \begin{equation}\label{eq:f expan 2}
       	\begin{aligned}
       	f(\mathbf x)&= f(\mathbf 0)+\delta f,\quad \vert \delta f\vert\leq \|f\|_{C^\alpha}\vert \mathbf x \vert ^\alpha\\
       	\eta&= \eta(\mathbf 0)+\delta \eta,\quad \vert \delta \eta\vert\leq \|\eta\|_{C^\alpha}\vert \mathbf x\vert ^\alpha\\
       	v(\mathbf x)&= v(\mathbf 0)+\delta v,\quad \vert \delta v\vert\leq \|v\|_{C^\alpha}\vert \mathbf x\vert ^\alpha
       	\end{aligned}
       \end{equation}
      Plugging \eqref{eq:f expan 2} into the integral  identity  \eqref{1eq:green}, it yields  that
          \begin{align}
         \eta(\mathbf 0)v(\mathbf 0)\int_{\Gamma_h^{\pm}}e^{\rho\cdot \mathbf x}\mathrm d\sigma&=f(\mathbf 0)\int_{S_h}u_0\mathrm d\mathbf x+\int_{S_h}\delta fu_0\mathrm d\mathbf x
         +\eta(\mathbf 0)v(\mathbf 0)\int_{\Gamma_h^{\pm}}\psi(\mathbf x)e^{\eta \cdot \mathbf x}\mathrm d\sigma\notag \\
         &+\eta(\mathbf 0)\int_{\Gamma_h^{\pm}}\delta v u_0\mathrm d\sigma
         +v(\mathbf 0)\int_{\Gamma_h^{\pm}}\delta \eta u_0\mathrm d\sigma
         +\int_{\Gamma_h^\pm}\delta v\delta\eta u_0\mathrm d\sigma\notag \\
         &-\int_{\Lambda_h}(w-v)\partial_\nu u_0\mathrm d\sigma+\int_{\Lambda_h}u_0\partial_\nu (w-v)\mathrm d\sigma. \label{eq:int 349}
         \end{align}
         By adopting  similar asymptotic  analysis  for each integrals in \eqref{eq:int 349}   with respect to  the  parameter $\tau$ as in the proof of Theorem \ref{thm:2D}, and letting $\tau \rightarrow \infty$, we can prove  Theorem \ref{2D:delta}.
       \end{proof}

       \subsection{Proof of Corollary \ref{cor1} }\label{subsec:32}

     Next, we give the proof of  Corollary \ref{cor1} regarding  the vanishing property of transmission eigenfunctions to \eqref{3eq:treta0}  near a convex planar corner under two regularity conditions described in Corollary \ref{cor1}. Since the proof of the statement (b) in Corollary \ref{cor1}  can be obtained by using the similar asymptotic analysis for proving  Corollary \ref{cor1} (a), we omit it here. In order to prove  the statement  (a) in Corollary \ref{cor1}, we give the following proposition which is obtained by slightly modifying the proof of Proposition \ref{2prop:tau2}.


\begin{prop}\label{prop:tau2}
	Let $S_{h}$ and $\eta$ be defined in (\ref{1eq:not}) and (\ref{1eq:eta}), respectively, where $ \mathbf d$ satisfies   \eqref{2eq:zeta}. Then we have
	\begin{equation}\label{1eq:tau}
	\left\vert \int_{S_h}e^{\eta \cdot \mathbf x}\mathrm d \mathbf x\right \vert \geq  \frac{\widetilde {C_{S_h }}}{\tau^2}-\mathcal O\left(\frac{1}{\tau}e^{-\frac{1}{2}\zeta h \tau}\right),
	\end{equation}
	for sufficiently large $\tau$, where  $\widetilde {C_{S_h }}$ is a positive number only depending on the opening angle $\theta_M-\theta_m$ of $\mathcal K$ and $\zeta$.
\end{prop}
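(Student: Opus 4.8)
The plan is to mirror the proof of Proposition \ref{2prop:tau2}, the only structural change being that the area element on the two-dimensional sector $S_h$ carries an extra factor of $r$ compared with the line element on $\Gamma_h^\pm$; this upgrades the radial integral from the $\tau^{-1}$ behaviour to the claimed $\tau^{-2}$ behaviour. First I would pass to polar coordinates $\mathbf x=(r\cos\theta,r\sin\theta)$, writing $\rho\cdot\mathbf x=-\tau r\,(\mathbf d+\mathrm i\mathbf d^\perp)\cdot\hat{\mathbf x}$ with $\hat{\mathbf x}=(\cos\theta,\sin\theta)$, so that
\begin{equation}\notag
\int_{S_h}e^{\rho\cdot\mathbf x}\mathrm d\mathbf x=\int_{\theta_m}^{\theta_M}\left(\int_0^h r\,e^{-\mu(\theta)r}\mathrm dr\right)\mathrm d\theta,\qquad \mu(\theta):=\tau(\mathbf d+\mathrm i\mathbf d^\perp)\cdot\hat{\mathbf x}.
\end{equation}
The constraint \eqref{2eq:zeta} gives $\Re\mu(\theta)=\tau\,\mathbf d\cdot\hat{\mathbf x}\ge \zeta\tau>0$ uniformly in $\theta$, so Proposition \ref{1prop:gamma} applies with $\alpha=1$ and $\epsilon=h$; by \eqref{1eq:gamma} and $\Gamma(2)=1$ the inner integral equals $\mu(\theta)^{-2}+\mathcal O(\tau^{-1}e^{-\frac12\zeta h\tau})$. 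Integrating in $\theta$ then isolates the leading term
\begin{equation}\notag
\int_{S_h}e^{\rho\cdot\mathbf x}\mathrm d\mathbf x=\frac{1}{\tau^2}\int_{\theta_m}^{\theta_M}\frac{\mathrm d\theta}{\big((\mathbf d+\mathrm i\mathbf d^\perp)\cdot\hat{\mathbf x}\big)^2}+\mathcal O\!\left(\frac{1}{\tau}e^{-\frac12\zeta h\tau}\right),
\end{equation}
so everything reduces to showing that the angular integral is a nonzero constant depending only on the opening angle.

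The clean way to see this is to exploit that $\{\mathbf d,\mathbf d^\perp\}$ is orthonormal: writing $\hat{\mathbf x}=\cos\phi\,\mathbf d+\sin\phi\,\mathbf d^\perp$ turns $(\mathbf d+\mathrm i\mathbf d^\perp)\cdot\hat{\mathbf x}$ into $e^{\mathrm i\phi}$, so the integrand becomes $e^{-2\mathrm i\phi}$. Since $\phi$ differs from $\theta$ only by a fixed rotation, $\mathrm d\phi=\mathrm d\theta$ and the $\phi$-range has width $\theta_M-\theta_m$. Moreover \eqref{2eq:zeta} says precisely $\cos\phi=\mathbf d\cdot\hat{\mathbf x}\ge\zeta>0$, forcing $\phi\in(-\pi/2,\pi/2)$ throughout the sector, hence the width $\theta_M-\theta_m$ lies in $(0,\pi)$. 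Thus the angular integral is $\int e^{-2\mathrm i\phi}\mathrm d\phi$, whose modulus evaluates to $|\sin(\theta_M-\theta_m)|$, and this is strictly positive because the width is in $(0,\pi)$. Setting $\widetilde{C_{S_h}}:=\sin(\theta_M-\theta_m)$ gives a positive constant depending only on the opening angle, with $\zeta$ entering merely through the error term and through guaranteeing the existence of an admissible $\mathbf d$.

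Finally I would combine the two displays and apply the triangle inequality, bounding the modulus of the leading term from below by $\widetilde{C_{S_h}}/\tau^2$ and absorbing the remainder into the stated $\mathcal O(\tau^{-1}e^{-\frac12\zeta h\tau})$ error, which yields \eqref{1eq:tau}. The one delicate point is the non-degeneracy of the angular integral: a priori $\big((\mathbf d+\mathrm i\mathbf d^\perp)\cdot\hat{\mathbf x}\big)^{-2}$ is complex-valued and could conceivably integrate to zero, so the essential step is the reduction to $\int e^{-2\mathrm i\phi}\mathrm d\phi$ together with the containment $\phi\in(-\pi/2,\pi/2)$ dictated by \eqref{2eq:zeta}. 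That containment both keeps $\mu(\theta)$ off the imaginary axis, so that the asymptotics of Proposition \ref{1prop:gamma} are legitimate, and forces $0<\theta_M-\theta_m<\pi$, so that $\sin(\theta_M-\theta_m)\ne0$ and the lower bound does not collapse.
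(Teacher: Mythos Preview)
Your proof is correct and follows the same overall skeleton as the paper: pass to polar coordinates, apply Proposition~\ref{1prop:gamma} with $\alpha=1$ to the radial integral, and then bound the resulting angular integral away from zero.

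Where you differ is in the treatment of the angular integral
\[
\int_{\theta_m}^{\theta_M}\frac{\mathrm d\theta}{\big((\mathbf d+\mathrm i\mathbf d^\perp)\cdot\hat{\mathbf x}\big)^2}.
\]
The paper invokes an ``integral mean value theorem'' to replace this by $(\theta_M-\theta_m)$ times the value of the integrand at some intermediate angle, and then uses $|(\mathbf d+\mathrm i\mathbf d^\perp)\cdot\hat{\mathbf x}|=1$. That step is not legitimate for a complex-valued integrand, and indeed the conclusion it suggests (modulus equal to $\theta_M-\theta_m$) is not what the integral actually equals. Your route---observing that orthonormality of $\{\mathbf d,\mathbf d^\perp\}$ makes $(\mathbf d+\mathrm i\mathbf d^\perp)\cdot\hat{\mathbf x}=e^{\mathrm i\phi}$ for $\phi=\theta+\text{const}$, so that the integral becomes $\int e^{-2\mathrm i\phi}\,\mathrm d\phi$ with modulus $|\sin(\theta_M-\theta_m)|$---is an explicit computation rather than an appeal to a mean-value principle, and it cleanly isolates the non-degeneracy condition $0<\theta_M-\theta_m<\pi$ as the reason the constant is strictly positive. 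This is a genuine improvement: it repairs a soft spot in the paper's argument and yields the constant $\widetilde{C_{S_h}}=\sin(\theta_M-\theta_m)$ explicitly.
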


\begin{proof}
	Using polar coordinates transformation and \eqref{1eq:gamma} in Proposition \ref{1prop:gamma}, we have
	\begin{equation}\notag
	\begin{aligned}
	\int_{S_h} e^{\rho \cdot \mathbf x}\mathrm d \mathbf x
	&=\int_{\theta_{m}}^{\theta_M}\left[\frac{\Gamma(2)}{(\tau(\mathbf d+\mathrm i \mathbf d^\perp)\cdot \mathbf {\hat x} )^2}-I_{\sf R}\right]\mathrm d\theta\\
	&=\frac{\Gamma(2)}{\tau^2}\int_{\theta_{m}}^{\theta_M}\frac{1}{\left(\mathbf d\cdot \mathbf{\hat x}+\mathrm i \mathbf{d}^\perp \cdot \mathbf{\hat x} \right)^2}\mathrm d \theta-\int_{\theta_{m}}^{\theta_M}I_{\sf{R}}\mathrm d\theta,
	\end{aligned}
	\end{equation}
	where $I_{\sf R}= \int_{h}^{\infty}e^{-\tau (\mathbf d +\mathrm i\mathbf d)\cdot \hat{\mathbf x}r}r\mathrm d r$.
	Hence,  it can be directly calculated that
	\begin{equation}\notag
	\int_{\theta_{m}}^{\theta_M}\frac{1}{\left(\mathbf d\cdot \mathbf{\hat x}+\mathrm i \mathbf{d}^\perp \cdot \mathbf{\hat x} \right)^2}\mathrm d \theta
	\geq \frac{\theta_M-\theta_m }{2}
	\end{equation}
	by using the integral mean value theorem.
	
	With the help of Proposition \ref{1prop:gamma}, for sufficiently large $\tau$, we have the following integral inequality
	\begin{equation}\label{1eq:eta0tau2}
	\begin{aligned}
	\left\vert \int_{S_h} e^{\rho \cdot \mathbf x}\mathrm d \mathbf x  \right\vert
	&\geq \frac{\Gamma(2)(\theta_{M}-\theta_{m})}{\tau^2} \frac{1}{\left\vert \mathbf d\cdot \mathbf x( \theta_{\xi})+ \mathrm i \mathbf{d}^\perp \cdot \mathbf{\hat x} (\theta_{\xi}) \right\vert^2}- \left\vert \int_{\theta_{m}}^{\theta_M} I_{\sf{R}} \mathrm d\mathbf \theta \right\vert\\
	&\geq \frac{\Gamma(2)}{(\theta_{M}-\theta_{m})}\frac{1}{\left(\mathbf d\cdot \mathbf{\hat x}(\theta_{\xi}) \right)^2+ \left(\mathbf{d}^\perp \cdot \mathbf{\hat x}(\theta_{\xi})  \right)^2}-\int_{\theta_{m}}^{\theta_M} \vert I_{\sf{R}} \vert \mathrm d\theta\\
	&\geq  \frac{\widetilde {C_{S_h }}}{\tau^2}-\frac{1}{\tau}e^{-\frac{1}{2}\zeta h \tau},
	\end{aligned}
	\end{equation}
	by using \eqref{2eq:zeta}.
\end{proof}

\begin{proof}[\bf {Proof of Corollary \ref{cor1}(a)}]\label{proof:holder2}
	Similar to the proof of Theorem \ref{thm:2D}, we have the following integral identity according to \eqref{1eq:green}  by noting $\eta \equiv 0$ on $\Gamma_h^\pm$,
	\begin{equation}\label{2eq:eta0I}
	k^2f_j(0)\int_{S_h}e^{\rho \cdot \mathbf x}\mathrm d \mathbf x=I_1+I_2+I_3+J_1+J_2,
	\end{equation}
	where
	\begin{align*}
	I_1&=-k^2\int_{S_h}(q-1)(v-v_j)u_0\mathrm d \mathbf x
	,\quad I_2=-k^2\int_{S_h}\delta f_ju_0\mathrm d\mathbf x, \\
	I_3&=-k^2f_j(\mathbf 0)\int_{S_h}e^{\rho \cdot \mathbf x}\psi(\mathbf x)\mathrm d\mathbf x,
	\end{align*}
	and $J_1$, $J_2$ are defined in \eqref{2eq:chaifen}, respectively.


 By the Sobolev embedding theorem and $q\in H^2(\overline S_{h})$, we have $q\in C^\alpha(\overline S_{h} )$, where $\alpha =1$.   Combining \eqref{2eq:eta0I} with \eqref{2eq:I1}, \eqref{2eq:I2} and \eqref{2eq:I33}, we can  deduce that
 \begin{equation}\label{2eq:fj0}
 \begin{aligned}
 k^2\left[ \frac{\widetilde{C_{S_h }}}{\tau^2}-\frac{1}{\tau}e^{-\frac{1}{2}\zeta h \tau}\right] &\vert f_j(\mathbf 0)\vert\lesssim
 j^{-\beta}\left[\left(\frac{1}{\tau^2}+\frac{1}{\tau}e^{-\zeta h\tau}\right)^{\frac{1}{2}}+\left(\frac{1}{\tau^2}+\frac{1}{\tau}e^{-2\zeta h\tau}\right)^{\frac{1}{4}}\tau^{-\frac{2}{3}} \right]\\
 &+j^{\gamma }\left[\tau^{-(\alpha+\frac{29}{12})}+\left(\frac{1}{\tau^{\alpha+2}}+\frac{1}{\tau}e^{-\frac{1}{2}\zeta h\tau}\right)\right]\\
 &+\tau^{-\frac{29}{12}}
 +(1+\tau)(1+\tau^{-\frac{2}{3}})e^{-\zeta h\tau}
 \end{aligned}
 \end{equation}
 as $\tau \to \infty$.  Multiplying  $\tau^2$ on the both sides of \eqref{2eq:fj0}, using the assumption \eqref{1eq:herg}, by letting $\tau=j^s$, it is easy to see that
 \begin{equation}\label{2eq:j}
 k^2\widetilde {C_{S_h }}\vert f_j(\mathbf 0)\vert \lesssim j^{-\beta +s}+j^{\gamma - \alpha s}.
 \end{equation}
And under the assumptions $\gamma/\alpha<\beta$, we choose $s\in(\gamma/\alpha,\beta)$.  Letting $j\to \infty$ in \eqref{2eq:j}, we obtain that
$$\vert f_j(\mathbf 0)\vert =0.$$
Since $q(\mathbf 0)\not =1$, we finish the proof of this corollary.
\end{proof}

       \section{Vanishing of transmission eigenfunctions near a convex conic corner or polyhedral corner}\label{sec:3D}

       In this section, we study the vanishing of eigenfunctions near a corner in $\mathbb R^3$ respectively, where  the  corner in $\mathbb R^3$ could  be  a convex conic  corner or polyhedral corner. Let us first  introduce the corresponding geometrical setup for our study.
       For a given point $\mathbf{x}_0\in\mathbb{R}^3$, let $\mathbf{v}_0=\mathbf y_0-\mathbf x_0$ where $\mathbf y_0\in\mathbb{R}^3$ is fixed.  Hence
       \begin{equation}\label{eq:cone1}
       {\mathcal C}={\mathcal C}_{\mathbf{x}_0,\theta_0}:= \left\{\mathbf{y} \in \mathbb R^3 ~|~0\leqslant \angle(\mathbf y-\mathbf{x}_0,\mathbf{v}_0)\leqslant \theta_0\right \}\ (\theta_0 \in(0,\pi/2))
       \end{equation}
       is a strictly convex conic cone with the apex $\mathbf x_0$ and an opening angle $2\theta_0 \in(0,\pi)$  in $\mathbb R^3$. Here $\mathbf v_0$ is referred to be the axis of $\mathcal C_{\mathbf x_0,\theta_0}$. Specifically,  when $\mathbf{x}_0=\mathbf 0$,  $\mathbf{v}_0=(0,0,1)^\top$, we write $\mathcal C_{\mathbf x_0,\theta_0 }$ as  $\mathcal{C}_{\theta_0}$. Define the truncated conic cone $\mathcal C^{h}:=\mathcal C^{h} _{ \mathbf 0}$ as
       \begin{equation}\label{eq:cone2}
       \mathcal C^{h}:=\mathcal C_{\theta_0}\cap B_{h},\quad \Gamma_h=\partial \mathcal C\cap B_h,\quad \Lambda_h=\mathcal C \cap \partial B_h,
       \end{equation}
       where $B_{h}$ is an open ball centered at $\mathbf 0$ with the radius $h\in \mathbb R_+$.

   Assume that $\mathcal K_{\mathbf x_0;\mathbf e_1,\ldots, \mathbf e_\ell}$ is a polyhedral cone with the apex $\mathbf x_0$ and edges $\mathbf e_j$ ($j=1,\ldots, \ell$, $\ell\geq 3$. Throughout of this paper we always suppose that  $  \mathcal K_{\mathbf x_0;\mathbf e_1,\ldots, \mathbf e_\ell}$ is strictly convex, which implies that   it can be fitted  into a conic  cone $\mathcal C_{\mathbf x_0, \theta_0}$ with the opening angle $\theta_0\in (0,\pi/2)$, where $\mathcal C_{\mathbf x_0, \theta_0}$  is defined in \eqref{eq:cone1}.  Without loss of generality, we  assume that  the axis of $\mathcal C_{\mathbf x_0, \theta_0}$  coincides with $x_3^+$ and $\mathbf x_0=\mathbf 0$. Given a constant $h\in \mathbb R_+$, we define the truncated polyhedral corner $\mathcal K_{\mathbf x_0}^{h}$ as
     \begin{equation}\label{eq:kr0}
       \mathcal K_{\mathbf x_0}^{h}=\mathcal K_{\mathbf{x_0}; {\mathbf e_1},\ldots {\mathbf e_\ell }}\cap B_{h}.
	\end{equation}
For convenience, we have a similar geometry setup with \eqref{eq:cone2} as
 \begin{equation}\label{eq:cone3}
 \mathcal K^h=\mathcal K_{\mathbf 0}^h,\ \Gamma_h=\partial \mathcal K\cap B_h, \ \Lambda _h=\mathcal K\cap \partial B_h.
 \end{equation}

       The following theorem states  that the transmission eigenfunctions to \eqref{0eq:tr} must vanish at a conic corner  if they have $H^1$ regularity and $v$ can be approximated by a sequence of Herglotz wave functions near the underlying conic corner with certain properties, where the detailed proof is postponed to Subsection \ref{subsec:41}.

       \begin{thm}\label{3thm:v0}
       	Let $\Omega$ is a bounded Lipschitz domain with a connected complement  and  $v,w\in H^1(\Omega)$ be a pair of transmission eigenfunctions to (\ref{0eq:tr}) associated with $k\in \mathbb R_+$. Assume that $\mathbf 0\in \Gamma \subset  \partial \Omega$ such that $\Omega\cap B_h=\mathcal C\cap B_h=\mathcal C^h$, where $\mathcal C$ is defined by (\ref{eq:cone1}) and $h\in \mathbb R_+$ is sufficient small such that $q\in H^2( \overline{\mathcal C^h})$ and $\eta\in C^{\alpha_1} (\overline{\Gamma_h})$, where  $\alpha_1\in (0,1)$. If the following conditions are fulfilled:
       	\begin{itemize}
       	  \item[(a)]for any give positive constants $\beta$ and $\gamma$ satisfying
       	  \begin{equation}\label{eq:assump2}
       	 \gamma<\frac{10}{11 }\alpha\beta,\quad \alpha=\min\{\alpha_1,1/2 \},
       	  \end{equation}
       	  the transmission eigenfunction $v$ can be approximated in $H^1(\mathcal C^h)$ by Herglotz functions
       	  \begin{equation}\label{2eq:herg}
       	  v_j=\int_{\mathbb S^2}e^{\mathrm ik\xi\cdot \mathbf x}g_j(\xi)\mathrm d\xi,\quad \xi\in \mathbb S^2, j=1,2,\cdots,
       	  \end{equation}
       	  with the kernels $g_j$ satisfying the approximation property
       	  \begin{equation}\label{3eq:kernel}
       	  \|v-v_j\|_{H^1(\mathcal C^h)}\leq j^{-\beta},\quad \|g_j\|_{L^2(\mathcal C^h)}\leq j^\gamma ;
       	  \end{equation}
       	
       	\item[(b)]the function $\eta$ dose not vanish at the apex $\mathbf 0$ of $\mathcal C^h$;
       	\end{itemize}
       	then one has
       	\begin{equation}\label{3eq:delv0}
       	\lim_{\lambda\to \infty}\frac{1}{m(B(\mathbf 0,\lambda)\cap\Omega)}\int_{B(\mathbf 0,\lambda)\cap \Omega}\vert v(\mathbf x)\vert \mathrm d \mathbf x=0,
       	\end{equation}
       	where $m(B(\mathbf 0,\lambda)\cap \Omega)$ is the area of $B(\mathbf 0,\lambda)\cap \Omega$.
       \end{thm}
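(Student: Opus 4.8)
The plan is to transplant, to the three-dimensional conic geometry, the scheme developed for Theorem \ref{thm:2D}. First I would set $u=w-v$ and record the boundary value problem it satisfies on $\mathcal C^h$, namely $\Delta u+k^2qu=k^2(1-q)v$ in $\mathcal C^h$ together with $u=0$ and $\partial_\nu u=\eta v$ on $\Gamma_h$; this is the exact analogue of \eqref{eq:pde sys 2}. I would then invoke Lemma \ref{1lem:nor} to build a CGO solution $u_0=(1+\psi)e^{\rho\cdot\mathbf x}$ of $\Delta u_0+k^2qu_0=0$, with $\rho=-\tau(\mathbf d+\mathrm i\mathbf d^\perp)$ as in \eqref{1eq:cgo}--\eqref{1eq:eta}, where $\mathbf d\in\mathbb S^2$ is chosen in the admissible set $\{\mathbf d:\mathbf d\cdot\hat{\mathbf x}>\zeta>0\ \forall\,\hat{\mathbf x}\in\mathcal C\cap\mathbb S^2\}$, so that $e^{\rho\cdot\mathbf x}$ concentrates exponentially at the apex $\mathbf 0$. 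The strict convexity $\theta_0\in(0,\pi/2)$ is exactly what guarantees this admissible set is nonempty, in analogy with the planar condition $\theta_M-\theta_m<\pi$. Because $q\in H^2(\overline{\mathcal C^h})$ and $n=3$, the Sobolev embedding $H^2\hookrightarrow C^{0,1/2}$ caps the effective H\"older exponent of $f_j:=(q-1)v_j$ at $1/2$; this is precisely why the hypothesis is phrased with $\alpha=\min\{\alpha_1,1/2\}$.

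Next I would feed this data into Green's formula (Lemma \ref{2lem:green}) on $\mathcal C^h$ to obtain the integral identity
\[
\int_{\Lambda_h}\big((w-v)\partial_\nu u_0-u_0\partial_\nu(w-v)\big)\,\mathrm d\sigma-\int_{\Gamma_h}\eta u_0v\,\mathrm d\sigma=k^2\int_{\mathcal C^h}(q-1)vu_0\,\mathrm d\mathbf x,
\]
the direct analogue of \eqref{1eq:green}. Expanding $f_j$, $v_j$ and $\eta$ to first H\"older order near $\mathbf 0$ exactly as in \eqref{1eq:fj} isolates the leading term $I=\eta(\mathbf 0)v_j(\mathbf 0)\int_{\Gamma_h}e^{\rho\cdot\mathbf x}\,\mathrm d\sigma$ together with a collection of remainder integrals over $\mathcal C^h$, $\Gamma_h$ and $\Lambda_h$. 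The remaining task is then to bound $|I|$ from below and every remainder from above as $\tau\to\infty$.

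For the lower bound I would establish the conic counterpart of Proposition \ref{2prop:tau2}: parametrising the lateral surface by the generator length $r$ and the azimuthal angle and using \eqref{1eq:gamma}, the surface measure carries an extra factor $\sim r$, so that $\int_{\Gamma_h}e^{\rho\cdot\mathbf x}\,\mathrm d\sigma\gtrsim\tau^{-2}$ (one power weaker than the planar $\tau^{-1}$, consistent with the jump in codimension). The remainder estimates need the conic versions of Proposition \ref{1prop:enorm} and Lemmas \ref{2lem:psi8} and \ref{2lem:u0est}, i.e.\ the $L^t$-norms of $e^{\rho\cdot\mathbf x}$ over the three-dimensional $\mathcal C^h$, the two-dimensional $\Gamma_h$ and the spherical cap $\Lambda_h$, together with a decay rate for $\|\psi\|_{H^{1,p}}$ drawn from \eqref{1nor:psi} for an admissible pair $(\tilde p,p)$ adapted to $n=3$. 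Collecting all estimates, multiplying through by $\tau^2$ and setting $\tau=j^s$ yields an inequality of the shape $|\eta(\mathbf 0)v_j(\mathbf 0)|\lesssim\sum_\ell j^{-\beta+c_\ell s}+\sum_\ell j^{\gamma-\alpha' s}$; the hypothesis $\gamma<\tfrac{10}{11}\alpha\beta$ then opens a nonempty window $s\in\big(\tfrac{11\gamma}{10\alpha},\beta\big)$ in which every exponent is strictly negative, whence $|\eta(\mathbf 0)v_j(\mathbf 0)|\to0$. Since $\eta(\mathbf 0)\neq0$ by assumption (b), this forces $v_j(\mathbf 0)\to0$, and the Herglotz approximation \eqref{3eq:kernel} combined with the integral mean value theorem upgrades this to the averaged vanishing \eqref{3eq:delv0}.

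The main obstacle is the substantially weaker decay of the CGO corrector $\psi$ in three dimensions. The exponent in \eqref{1nor:psi} equals $n(\tfrac1{\tilde p}-\tfrac1p)-2=\tfrac{n}{1+\epsilon_0}-2$, which for $n=2$ delivered the comfortable $\tau^{-2/3}$ but for $n=3$ sits much nearer to $0$, and the rescaling $\mathbf y=\tau\mathbf x$ needed to pass $\psi$ through the trace theorem (Lemma \ref{lem:trace thm}) costs further positive powers of $\tau$ that grow with the dimension. Since the leading boundary integral is now only $\sim\tau^{-2}$, the competing $\tau$-powers in the remainders leave a far tighter margin than in the planar case. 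Arranging that every remainder is strictly subordinate to $\tau^{-2}$ while the $j^\gamma$-growth of the H\"older constants stays controllable is exactly what dictates both the constant $\tfrac{10}{11}$ and the cap $\alpha\le1/2$; verifying that the window for $s$ is genuinely nonempty and that each term truly decays is the delicate heart of the argument.
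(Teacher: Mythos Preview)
Your proposal is correct and follows essentially the same route as the paper: the same CGO solution, the same Green identity on $\mathcal C^h$, the same H\"older expansions and nine-plus-two term decomposition, the conic lower bound $\bigl|\int_{\Gamma_h}e^{\rho\cdot\mathbf x}\,\mathrm d\sigma\bigr|\gtrsim\tau^{-2}$, and the final $\tau=j^s$ balancing. One small bookkeeping point: when you carry out the estimates the dominant competing exponents come out as $j^{-\beta+\frac{11}{10}s}$ and $j^{\gamma-\alpha s}$, so the admissible window is $s\in(\gamma/\alpha,\tfrac{10}{11}\beta)$ rather than $\big(\tfrac{11\gamma}{10\alpha},\beta\big)$; the nonemptiness condition $\gamma<\tfrac{10}{11}\alpha\beta$ is the same either way.
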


  As remarked earlier, the Herglotz approximation property in \eqref{3eq:kernel} characterises a regularity of $v$ weaker than the H\"older continuity (cf. \cite{LT}).      In the following theorem, if a stronger H\"older regularity condition near a conic corner on the transmission eigenfunction $v$ to \eqref{0eq:tr} is satisfied, we also have the vanishing characterization of the corresponding transmission eigenfunction $v$. Namely, when $v$ is H\"older continuous near the underlying circular corner, we show that  it  must  vanish at the apex of the conic  corner. The proof can be obtained  by modifying the corresponding proof of Theorem \ref{3thm:v0} directly as for the two dimensional case, which is omitted.

        \begin{thm}\label{thm:3alpha}
        	Let $v\in H^1(\Omega)$ and $w\in H^{1}(\Omega)$ be a pair of transmission  eigenfunctions to \eqref{0eq:tr} associated with $k\in \mathbb R_+$. Assume that the Lipschitz domain $\Omega\subset \mathbb R^3$ with $\mathbf 0 \in \partial \Omega$ contains a  conic corner $\Omega\cap B_h=\mathcal C\cap B_h=\mathcal C^h$, such that $v\in C^{\alpha}(\overline{\mathcal C^h})$, $q\in H^2(\overline {\mathcal C^h})$ and $\eta \in C^\alpha (\overline{\Gamma _h}) $ for $0<\alpha <1$, where $B_h,\ \Gamma_h $ and $\mathcal C^h$ are defined in (\ref{eq:cone2}). If  $\eta(\mathbf 0)\not=0$, where $\mathbf 0$ is the apex of $\mathcal C^h$,
        	then one has
        	\begin{equation}\label{3eq:v0}
        	v(\mathbf 0)=0.
        	\end{equation}
        \end{thm}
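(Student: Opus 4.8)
The plan is to mirror the argument for Theorem \ref{3thm:v0}, passing to the H\"older setting exactly as Theorem \ref{2D:delta} is deduced from Theorem \ref{thm:2D}. First I would invoke Lemma \ref{1lem:nor} with $n=3$ to build the CGO solution $u_0=(1+\psi)e^{\rho\cdot\mathbf x}$ of $\Delta u_0+k^2 q u_0=0$ (after a Sobolev extension of $q\in H^2(\overline{\mathcal C^h})$, as in Lemma \ref{lem:31}), recording the dimension-dependent decay $\|\psi\|_{H^{1,p}}=\mathcal O(\tau^{3(1/\tilde p-1/p)-2})$ from \eqref{1nor:psi} for an admissible pair $(p,\tilde p)$ satisfying \eqref{eq:p til cond}. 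Since $\mathcal C_{\theta_0}$ is strictly convex with opening angle $2\theta_0<\pi$, I can fix a direction $\mathbf d\in\mathbb S^2$ with $\mathbf d\cdot\hat{\mathbf x}\ge\zeta>0$ for all $\hat{\mathbf x}\in\mathcal C\cap\mathbb S^2$, the three-dimensional counterpart of \eqref{2eq:zeta}, so that $e^{\rho\cdot\mathbf x}$ decays on the whole cone.

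Next I would apply the Green formula of Lemma \ref{2lem:green} to $u_0$ and $u=w-v$ on $\mathcal C^h$, using the transmission system (the conic analogue of \eqref{eq:pde sys 2}: $u=0$, $\partial_\nu u=\eta v$ on $\Gamma_h$, and $\Delta u+k^2 q u=k^2(1-q)v$ in $\mathcal C^h$), to obtain the integral identity corresponding to \eqref{1eq:green} with $\Gamma_h$ in place of $\Gamma_h^\pm$, $\mathcal C^h$ in place of $S_h$, and the spherical cap $\Lambda_h$ as the remaining boundary piece. Because $v\in C^\alpha(\overline{\mathcal C^h})$, $q\in H^2(\overline{\mathcal C^h})\hookrightarrow C^\alpha$, and $\eta\in C^\alpha(\overline{\Gamma_h})$, I would expand $f:=(q-1)v$, $\eta$ and $v$ into their values at $\mathbf 0$ plus H\"older remainders controlled by $|\mathbf x|^\alpha$, exactly as in \eqref{eq:f expan 2}. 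This isolates the leading term $\eta(\mathbf 0)v(\mathbf 0)\int_{\Gamma_h}e^{\rho\cdot\mathbf x}\,\mathrm d\sigma$ against a collection of remainder integrals, in direct parallel with the decomposition \eqref{eq:int 349}.

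The quantitative heart of the proof is then a three-dimensional lower bound of the form $\big|\int_{\Gamma_h}e^{\rho\cdot\mathbf x}\,\mathrm d\sigma\big|\ge C\,\tau^{-2}-\mathcal O(\tau^{-2}e^{-\zeta h\tau/2})$, obtained by parametrizing the conic surface $\Gamma_h$ and applying \eqref{1eq:gamma} of Proposition \ref{1prop:gamma}; note that, since $\Gamma_h$ is a two-dimensional surface, the surface integral is of order $\tau^{-2}$ rather than the $\tau^{-1}$ seen in the planar Proposition \ref{2prop:tau2}, so the relevant normalization is by $\tau^2$. Matching this, I would prove the three-dimensional analogues of Lemmas \ref{2lem:psi8} and \ref{2lem:u0est}: the volume remainders $f(\mathbf 0)\int_{\mathcal C^h}u_0$ and $\int_{\mathcal C^h}\delta f\,u_0$ are of order $\tau^{-3}$ and $\tau^{-3-\alpha}$; the H\"older boundary remainders carrying $\delta v$, $\delta\eta$ or their product are of order $\tau^{-2-\alpha}$ and $\tau^{-2-2\alpha}$; and the integrals over $\Lambda_h$ are exponentially small, of order $(1+\tau)e^{-\zeta h\tau}$ up to $\psi$-corrections, using \eqref{1eq:Ir} and the trace theorem of Lemma \ref{lem:trace thm}. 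Multiplying the identity by $\tau^2$ and letting $\tau\to\infty$ then annihilates every remainder and forces $\eta(\mathbf 0)v(\mathbf 0)=0$; since $\eta(\mathbf 0)\neq 0$ by hypothesis, \eqref{3eq:v0} follows.

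The main obstacle I anticipate is controlling the $\psi$-weighted boundary term $\eta(\mathbf 0)v(\mathbf 0)\int_{\Gamma_h}\psi\,e^{\rho\cdot\mathbf x}\,\mathrm d\sigma$, which must be shown to be $o(\tau^{-2})$ so as to remain strictly subordinate to the leading term. In contrast to the planar case, surface integrals on $\Gamma_h$ already live at the order $\tau^{-2}$ of the leading contribution, so the gain must come entirely from the decay of $\psi$; this requires selecting the Sobolev exponents $(p,\tilde p)$ for $n=3$ in \eqref{eq:p til cond} so that the decay in \eqref{1nor:psi}, combined with the rescaling $\mathbf y=\tau\mathbf x$ and the trace estimate of Lemma \ref{lem:trace thm}, produces a power of $\tau$ strictly less than $-2$. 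Verifying this threshold---effectively the three-dimensional substitute for the exponent $-17/12$ of Lemma \ref{2lem:psi8}---is the delicate point that the dimensional change renders nontrivial; the remaining H\"older and spherical-cap estimates are then routine adaptations of their two-dimensional analogues.
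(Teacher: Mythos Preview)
Your proposal is correct and follows essentially the same route as the paper, which in fact omits the detailed proof and simply states that it is obtained by modifying the proof of Theorem~\ref{3thm:v0} in the H\"older setting exactly as Theorem~\ref{2D:delta} is deduced from Theorem~\ref{thm:2D}. All of your ingredients---the CGO solution with \eqref{3eq:psinorm}, the Green identity \eqref{3eq:green}, the H\"older expansions as in \eqref{3eq:exalpha}, the lower bound \eqref{3eq:eta} of order $\tau^{-2}$, and the remainder estimates of Lemma~\ref{3lem:u0est}---match the paper's machinery, and the delicate $\psi$-boundary term you flag is precisely \eqref{3eq:I61}, which the paper bounds by $\tau^{-81/40}=o(\tau^{-2})$ using $p=8$, $\tilde p=120/79$ in \eqref{eq:p til cond}.
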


Consider a  cuboid corner $\mathcal K_{\mathbf x_0;\mathbf e_1, \mathbf e_2, \mathbf e_3}$ defined by \eqref{eq:kr0}. In Theorem \ref{3:cubiod}, we show that the transmission eigenfunctions to \eqref{0eq:tr} vanish at the  cuboid corner $\mathcal K_{\mathbf x_0;\mathbf e_1, \mathbf e_2, \mathbf e_3}$ when they are H\"older continuous at the corner point. The proof of Theorem \ref{3:cubiod} can be found  in Subsection \ref{subsec:42}. Since $\Delta$  is invariant under rigid motion, we assume that the apex $\mathbf x_0$ of $\mathcal K_{\mathbf x_0;\mathbf e_1, \mathbf e_2, \mathbf e_3}$ coincides with  the origin, and the edges of $\mathcal K_{\mathbf x_0;\mathbf e_1, \mathbf e_2, \mathbf e_3}$  satisfy $\mathbf e_1=(1,0,0)^\top  $, $\mathbf e_2=(0,1,0)^\top  $ and $\mathbf e_3=(0,0,1)^\top  $.

	\begin{thm}\label{3:cubiod}
		Let $v\in H^1{(\Omega}),\ w\in H^1{(\Omega)}$ be a pair of transmission eigenfunctions of \eqref{0eq:tr} with $k>0$. Assume that Lipschitz domain $\Omega \subset \mathbb R^3$ with $\mathbf 0 \in \Gamma \subset \partial \Omega$ contains a cuboid corner $\Omega \cap B_h =\mathcal K \cap B_h =\mathcal K^h,$ such that $v\in C^\alpha (\overline{\mathcal K^h}) $, $q\in H^2(\overline{\mathcal K^h})$ and  $\eta \in C^\alpha (\overline{\Gamma _h})$  for $0<\alpha < 1$, where $\mathcal K^h$  and  $\Gamma _h$ are defined in \eqref{eq:kr0}. If   $\eta (\mathbf 0)\not=0$, then
		$$v(\mathbf 0)=0.$$
	\end{thm}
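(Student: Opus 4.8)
The plan is to follow the template of the planar H\"older case (Theorem \ref{2D:delta}) and the conic case (Theorem \ref{thm:3alpha}), the only genuinely new ingredient being a lower bound for a boundary integral taken over the three faces of the cuboid corner. First I would set $u=w-v$ and use \eqref{0eq:tr} to obtain, exactly as in Lemma \ref{lem:31}, the system
\begin{equation*}
\begin{cases}
\Delta u_0 + k^2 q u_0 = 0 & \mbox{in } \mathcal K^h,\\
\Delta u + k^2 q u = k^2(1-q)v & \mbox{in } \mathcal K^h,\\
u = 0,\ \ \partial_\nu u = \eta v & \mbox{on } \Gamma_h,
\end{cases}
\end{equation*}
where $u_0=(1+\psi)e^{\rho\cdot\mathbf x}$ is the three-dimensional CGO solution furnished by Lemma \ref{1lem:nor}, with $\rho=-\tau(\mathbf d+\mathrm i\mathbf d^\perp)$ and, crucially, $\mathbf d=(1,1,1)/\sqrt3$ chosen along the axis of the octant $\mathcal K$ so that $\mathbf d\cdot\hat{\mathbf x}\geq\zeta>0$ on $\mathcal K\cap\mathbb S^2$ in the sense of \eqref{2eq:zeta}. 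Here I would record the $n=3$ analogue of the estimate \eqref{1eq:psi} for $\|\psi\|_{H^{1,p}}$ by re-running the exponent bookkeeping of Lemma \ref{lem:31} with $n=3$ and an admissible pair $(p,\tilde p)$ meeting the constraints of Lemma \ref{1lem:nor} (which force $p>2$).

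Next I would apply Green's formula (Lemma \ref{2lem:green}) over $\mathcal K^h$ to $u$ and $u_0$, producing
\begin{equation*}
\int_{\Lambda_h}\big((w-v)\partial_\nu u_0 - u_0\partial_\nu(w-v)\big)\,\mathrm d\sigma - \int_{\Gamma_h}\eta u_0 v\,\mathrm d\sigma = k^2\int_{\mathcal K^h}(q-1)v u_0\,\mathrm d\mathbf x,
\end{equation*}
and then insert the H\"older expansions $v=v(\mathbf 0)+\delta v$, $\eta=\eta(\mathbf 0)+\delta\eta$ and $(q-1)v=f(\mathbf 0)+\delta f$ with $|\delta v|,|\delta\eta|,|\delta f|\lesssim|\mathbf x|^\alpha$. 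This reproduces, verbatim in structure, the integral identity \eqref{eq:int 349} from the proof of Theorem \ref{2D:delta}, isolating the dominant contribution $\eta(\mathbf 0)v(\mathbf 0)\int_{\Gamma_h}e^{\rho\cdot\mathbf x}\,\mathrm d\sigma$ together with a volume term $f(\mathbf 0)\int_{\mathcal K^h}u_0$, the H\"older remainders on $\Gamma_h$ and $\mathcal K^h$, the $\psi$-boundary correction, and the two $\Lambda_h$-integrals.

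The crux, and the step I expect to demand the most care, is the cuboid analogue of Proposition \ref{2prop:tau2}:
\begin{equation*}
\left|\int_{\Gamma_h}e^{\rho\cdot\mathbf x}\,\mathrm d\sigma\right| \geq \frac{C_{\mathcal K^h}}{\tau^2} - \mathcal O\!\left(\frac{1}{\tau}e^{-\frac{1}{2}\zeta h\tau}\right),\qquad C_{\mathcal K^h}>0.
\end{equation*}
Since $\Gamma_h$ consists of the three mutually orthogonal quarter-discs $F_i=\{x_i=0\}\cap\mathcal K\cap B_h$, I would evaluate each face integral in polar coordinates on the face and extract its leading $\tau^{-2}$ term via \eqref{1eq:gamma}. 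Writing $\omega_j=d_j+\mathrm i d_j^\perp$ for the complexified components of $-\rho/\tau$, the in-face angular integral evaluates exactly, by the substitution $t=\tan\phi$, to $\omega_a^{-1}\omega_b^{-1}$ with $\{a,b\}$ the two in-face indices; no pole is crossed along the way because all $d_j=1/\sqrt3>0$. The potential obstacle — cancellation among the three faces — is then resolved by summing them, which telescopes to a leading coefficient proportional to
\begin{equation*}
\frac{\omega_1+\omega_2+\omega_3}{\omega_1\omega_2\omega_3} = \frac{\mathbf d\cdot(1,1,1) + \mathrm i\,\mathbf d^\perp\cdot(1,1,1)}{\omega_1\omega_2\omega_3}.
\end{equation*}
With the symmetric axis $\mathbf d=(1,1,1)/\sqrt3$, orthogonality $\mathbf d^\perp\perp\mathbf d$ forces $\mathbf d^\perp\cdot(1,1,1)=0$, so the numerator equals $\sqrt3\neq0$, while $\omega_1\omega_2\omega_3\neq0$ since each $d_j\neq0$; this yields $C_{\mathcal K^h}>0$. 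It is precisely the orthogonality of the cuboid edges that makes the angular integrals telescope, which is why the statement is confined to cuboid corners.

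Finally I would control the remaining pieces exactly as in the two-dimensional proof: the boundary remainders carrying a factor $|\mathbf x|^\alpha$ are $\mathcal O(\tau^{-(2+\alpha)})$, the volume term $f(\mathbf 0)\int_{\mathcal K^h}u_0$ is $\mathcal O(\tau^{-3})$ (up to the $\psi$-correction), the $\psi$-boundary term is handled via the trace theorem (Lemma \ref{lem:trace thm}) together with the $n=3$ bound on $\|\psi\|_{H^{1,p}}$, and the $\Lambda_h$-integrals are exponentially small by the three-dimensional counterpart of Lemma \ref{2lem:u0est}. Every one of these is $o(\tau^{-2})$. Multiplying the resulting inequality by $\tau^2$ and letting $\tau\to\infty$ then forces $\eta(\mathbf 0)v(\mathbf 0)=0$; since $\eta(\mathbf 0)\neq0$ by hypothesis, we conclude $v(\mathbf 0)=0$, as claimed.
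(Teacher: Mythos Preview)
Your proposal is correct and follows the same overall architecture as the paper: Green's identity against the CGO solution, H\"older expansion of $v$, $\eta$ and $f=(q-1)v$, a $\tau^{-2}$ lower bound for $\int_{\Gamma_h}e^{\rho\cdot\mathbf x}\,\mathrm d\sigma$, and $o(\tau^{-2})$ upper bounds for the remaining pieces (the paper obtains the latter by enclosing $\mathcal K^h$ in a conic corner $\mathcal C^h$ and quoting Lemma~\ref{3lem:u0est}, which you should also do rather than appeal directly to the planar Lemma~\ref{2lem:u0est}).

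The one substantive difference is how the key lower bound is obtained. The paper (Propositions~\ref{pro:43}--\ref{pro:44}) fixes the specific pair $\mathbf d=(1,1,1)$, $\mathbf d^\perp=(1,-1,0)$, applies a mean-value-type argument on each face to reduce the angular integral to a single evaluation point $\theta_\xi$, and then verifies by a hands-on computation that $\sum_{j=1}^3 z_j^{-2}$ is bounded below. Your route is cleaner: you evaluate each face's angular integral \emph{exactly} via $t=\tan\phi$ to get $1/(\omega_a\omega_b)$, and then the telescoping identity $\sum 1/(\omega_a\omega_b)=(\omega_1+\omega_2+\omega_3)/(\omega_1\omega_2\omega_3)$ together with the choice $\mathbf d\parallel(1,1,1)$ (so that $\mathbf d^\perp\cdot(1,1,1)=0$ automatically) gives nonvanishing for \emph{every} admissible $\mathbf d^\perp$. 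This avoids the mean-value step for complex integrands and yields a more transparent constant; the paper's approach, by contrast, pins down an explicit numerical lower bound $\sin^3\theta_\xi/30$.
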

	
\begin{rem}
	Consider the classical  transmission eigenvalue  problem \eqref{3eq:treta0} in $\mathbb R^3$, namely $\eta\equiv  0$ on $\Gamma$ in \eqref{0eq:tr},  when the underlying domain $\Omega$ of \eqref{3eq:treta0} has  a  cuboid corner $\mathcal K_{\mathbf x_0;\mathbf e_1, \mathbf e_2, \mathbf e_3}$, if the corresponding  potential $q$ has $\alpha$-H\"older continuity  regularity for $\alpha >\frac{1}{4}$ near the cuboid corner (cf. \cite[Definition 2.2 and Theorem 3.2]{BL2017}), then the transmission eigenfunction $v$ must vanish near the corner.  Compared with the results in \cite{BL2017}, the vanishing property of transmission eigenfunctions to \eqref{0eq:tr} near the underlying cuboid corner holds under a general scenario. Namely, the assumption in Theorem \ref{3:cubiod} only needs $q$ fulfills $H^2$ regularity, $v$ and boundary parameter $\eta$ are H\"older continuous near $\mathbf x_0$, where $\eta(\mathbf x_0) \neq 0$.

\end{rem}

In the following two corollaries, we consider the classical transmission eigenvalue problem \eqref{3eq:treta0}, namely $\eta\equiv  0$ on $\Gamma$ in \eqref{0eq:tr}, where the domain $\Omega$ contains a conic or polyhedral corner. The proof of Corollary \ref{cor2} is postponed in Subsection \ref{subsec:43}.

 
\begin{cor}\label{cor2}
	Let $\Omega$ is a bounded Lipschitz domain with a connected complement  and  $v,w\in H^1(\Omega)$ be a pair of transmission eigenfunctions to \eqref{3eq:treta0}  associated with $k\in \mathbb R_+$. Assume that $\mathbf 0\in \Gamma\subset \partial \Omega$ such that $\Omega\cap B_h=\mathcal C\cap B_h=\mathcal C^h$, where $\mathcal C$ is defined by (\ref{eq:cone1}) and $h\in \mathbb R_+$ is sufficient small such that $q\in H^2( \overline{\mathcal C^h})$ and $q(\mathbf 0)\neq 1$.
	\begin{itemize}
		\item[(a)]For any give positive constants $\beta$ and $\gamma$ satisfying
		$
		\gamma <\frac{20}{37}\alpha\beta,
		$
		 if the transmission eigenfunction $v$ can be approximated in $H^1(\mathcal C^h)$ by Herglotz wave functions $v_j$ defined by \eqref{2eq:herg}
		with the kernels $g_j$ satisfying the approximation property
		\eqref{3eq:kernel}, then we have the vanishing of the transmission eigenfunction $v$ near $\mathcal C^h$ in the sense of \eqref{3eq:delv0}.
		\item[(b)]If $v\in C^\alpha (\overline{\mathcal C ^h})$ with $\alpha\in (0,1)$, then one has $v(\mathbf 0)=0$.
	\end{itemize}
\end{cor}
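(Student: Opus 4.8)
The plan is to run the scheme behind Theorem~\ref{3thm:v0} for a conic corner, now with the simplification $\eta\equiv 0$ on $\Gamma_h$, exactly as Corollary~\ref{cor1}(a) was deduced from Theorem~\ref{thm:2D} in the plane. Writing $u=w-v$ and taking the CGO solution $u_0=(1+\psi)e^{\rho\cdot\mathbf x}$ from Lemma~\ref{1lem:nor} (with exponents $(p,\widetilde p)$ chosen admissibly for $n=3$ from the extension $\widetilde q\in H^2\hookrightarrow H^{1,1+\epsilon_0}$, the three-dimensional counterpart of the choice $p=8$ made in Lemma~\ref{lem:31}), Green's formula from Lemma~\ref{2lem:green} on $\mathcal C^h$ annihilates the lateral boundary contribution because $u=0$ and $\partial_\nu u=\eta v\equiv 0$ on $\Gamma_h$. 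Expanding $f_j:=(q-1)v_j=f_j(\mathbf 0)+\delta f_j$ with $|\delta f_j|\le\|f_j\|_{C^\alpha}|\mathbf x|^\alpha$, where $\alpha$ is the H\"older exponent inherited from the embedding $H^2(\overline{\mathcal C^h})\hookrightarrow C^\alpha$ valid in $\mathbb R^3$, the identity reduces to
\begin{equation}\notag
k^2 f_j(\mathbf 0)\int_{\mathcal C^h}e^{\rho\cdot\mathbf x}\,\mathrm d\mathbf x=I_1+I_2+I_3+J_1+J_2,
\end{equation}
in which $I_1=-k^2\int_{\mathcal C^h}(q-1)(v-v_j)u_0$ carries the Herglotz error, $I_2=-k^2\int_{\mathcal C^h}\delta f_j\,u_0$ the H\"older remainder, $I_3=-k^2f_j(\mathbf 0)\int_{\mathcal C^h}e^{\rho\cdot\mathbf x}\psi$ the CGO corrector, and $J_1,J_2$ are the spherical-cap terms on $\Lambda_h$.

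The first genuinely new ingredient is the three-dimensional lower bound replacing Proposition~\ref{prop:tau2}: in spherical coordinates $\mathrm d\mathbf x=r^2\,\mathrm dr\,\mathrm d\hat{\mathbf x}$, so applying \eqref{1eq:gamma} of Proposition~\ref{1prop:gamma} with the power $\alpha=2$ to the radial integral produces the leading term $\Gamma(3)\mu^{-3}$, and bounding the angular integral over $\mathcal C\cap\mathbb S^2$ from below by means of \eqref{2eq:zeta} yields
\begin{equation}\notag
\left|\int_{\mathcal C^h}e^{\rho\cdot\mathbf x}\,\mathrm d\mathbf x\right|\ge\frac{\widetilde C_{\mathcal C^h}}{\tau^3}-\mathcal O\!\left(\frac{1}{\tau}e^{-\frac12\zeta h\tau}\right)
\end{equation}
for large $\tau$. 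I would emphasise that this dominant order $\tau^{-3}$ is one power weaker than the $\tau^{-2}$ lateral-surface order that drives Theorem~\ref{3thm:v0}; this is the structural reason the admissible window for $\gamma$ narrows relative to the conductive case.

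With the lower bound in hand I would estimate $I_1,I_2,I_3,J_1,J_2$ as $\tau\to\infty$ using the $n=3$ analogues of the weighted $L^t$ and trace bounds of Lemmas~\ref{2lem:psi8} and \ref{2lem:u0est}, the corrector decay $\|\psi\|_{H^{1,p}}=\mathcal O(\tau^{n(1/\widetilde p-1/p)-2})$ from \eqref{1nor:psi}, and the kernel control $\|g_j\|_{L^2}\le j^\gamma$ from \eqref{3eq:kernel} (which bounds $\|f_j\|_{C^\alpha}$ and $\sup|v_j|$ by $\lesssim j^\gamma$). Substituting these, dividing by the lower bound, multiplying through by $\tau^3$ and setting $\tau=j^s$ collapses the identity into a finite sum of monomials $j^{-\beta+as}$ (from the approximation and cap terms) and $j^{\gamma-bs}$ (from the H\"older and corrector terms) with explicit rational exponents; the tightest of these balances forces $\gamma<\tfrac{20}{37}\alpha\beta$, precisely the hypothesis of part~(a), so that $s$ may be chosen in a nonempty interval. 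Letting $j\to\infty$ then gives $|f_j(\mathbf 0)|\to 0$, and since $q(\mathbf 0)\neq 1$ we obtain $|v_j(\mathbf 0)|\to0$; the integral mean value theorem combined with \eqref{3eq:kernel} delivers the averaged vanishing \eqref{3eq:delv0}. Part~(b) follows from the same identity with $v$ replaced by its own H\"older expansion $v=v(\mathbf 0)+\delta v$ so that the $I_1$ term disappears, exactly as Theorem~\ref{thm:3alpha} is derived from Theorem~\ref{3thm:v0}.

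The main obstacle I anticipate is the sharp three-dimensional asymptotic bookkeeping: because the governing volume term decays only like $\tau^{-3}$, every error integral must be controlled one order of $\tau$ tighter than in the planar argument, and this hinges on the admissible choice of $(p,\widetilde p)$ in Lemma~\ref{1lem:nor} for $n=3$ and the resulting decay rate of $\psi$. Propagating these rates through the weighted volume and spherical-cap estimates, and checking that they combine to give exactly the threshold $\tfrac{20}{37}$ rather than a suboptimal constant, is the delicate part of the proof; the remaining manipulations are direct adaptations of the planar computation already carried out for Corollary~\ref{cor1}(a).
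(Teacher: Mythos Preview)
Your proposal is correct and follows essentially the same approach as the paper: the paper likewise reduces \eqref{3eq:green} with $\eta\equiv0$ to the identity $k^2 f_j(\mathbf 0)\int_{\mathcal C^h}e^{\rho\cdot\mathbf x}\,\mathrm d\mathbf x=I_1+I_2+I_3+J_1+J_2$, invokes the volume lower bound $|\int_{\mathcal C^h}e^{\rho\cdot\mathbf x}\,\mathrm d\mathbf x|\ge \widetilde{C_{\mathcal C^h}}\tau^{-3}-\mathcal O(\tau^{-1}e^{-\zeta h\tau/2})$ (recorded there as Proposition~\ref{3prop:eta3}), reuses the estimates \eqref{3eq:I11}, \eqref{3eq:I2}, \eqref{3eq:I3}, \eqref{3eq:J1} from the proof of Theorem~\ref{3thm:v0}, and after multiplying by $\tau^3$ and setting $\tau=j^s$ obtains exactly $|f_j(\mathbf 0)|\lesssim j^{-\beta+\frac{37}{20}s}+j^{\gamma-\alpha s}$, whence $s\in(\gamma/\alpha,\tfrac{20}{37}\beta)$ forces the threshold $\gamma<\tfrac{20}{37}\alpha\beta$ and part~(b) is handled by the same modification you describe.
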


In the  subsequent  corollary, we  consider the case that $\Omega$ contains a polyhedral corner $\mathcal K^{h}$ defined by \eqref{eq:kr0}. When the transmission eigenfunction $v$ to  \eqref{3eq:treta0} satisfies two  regularity assumptions, we can establish the similar geometrical characterization of $v$ near the polyhedral corner. The proofs are similar to the counterpart  of Theorem \ref{thm:3alpha} and Corollary \ref{cor2}, where we only need to use the asymptotic analysis  \cite[Lemma 2.2]{BLX2020}  with respect to the parameter in the corresponding  CGO solution  introduced the following subsection. Hence  we omit its proof.

\begin{cor}\label{thm:poly}
	Let $\Omega$ is a bounded Lipschitz domain with a connected complement  and  $v,w\in H^1(\Omega)$ be a pair of transmission eigenfunctions to \eqref{3eq:treta0}  associated with $k\in \mathbb R_+$. Assume that $\mathbf 0\in \Gamma \subset  \partial \Omega$ such that $\Omega\cap B_h=\mathcal K_{\mathbf 0;\mathbf e_1,\cdots,\mathbf e_{\ell}}\cap B_{h}=\mathcal K^{h}$, where $\mathcal K^{h}$ is defined by \eqref{eq:kr0} and $h\in \mathbb R_+$ is sufficiently small such that $q\in H^2( \overline{\mathcal K^h})$  and $q(\mathbf 0)\neq 1$.
	\begin{itemize}
		\item[(a)]
		For any give positive constants $\beta$ and $\gamma$ satisfying $\gamma <\frac{20}{37}\alpha\beta$, if
		 the transmission eigenfunction $v$ can be approximated in $H^1(\mathcal C^h)$ by Herglotz wave functions $v_j$ defined by \eqref{2eq:herg}
		with the kernels $g_j$ satisfying the approximation property \eqref{3eq:kernel}, then we have the  vanishing property of  $v$ near  $\mathcal K^{h}$ in the  sense of \eqref{3eq:delv0}.
		\item[(b)] If $v\in C^{\alpha}(\overline{ \mathcal K^{h}})$ with $\alpha\in (0,1)$, then one  has $v(\mathbf 0)=0$.
	\end{itemize}
	
\end{cor}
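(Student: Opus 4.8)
The plan is to follow the scheme used for the conic corner in Theorem~\ref{thm:3alpha} and Corollary~\ref{cor2}, altering only the asymptotic analysis so as to accommodate the flat faces of the polyhedral cone $\mathcal K^h$. I would first set $u=w-v$. Since $\eta\equiv 0$ in \eqref{3eq:treta0}, the transmission conditions $w=v$ and $\partial_\nu w=\partial_\nu v$ on $\Gamma$ give $u=0$ and $\partial_\nu u=0$ on the faces $\Gamma_h$, while the interior equations yield $\Delta u+k^2qu=k^2(1-q)v$ in $\mathcal K^h$. I would then construct, via Lemma~\ref{1lem:nor}, the CGO solution $u_0(\mathbf x)=(1+\psi(\mathbf x))e^{\rho\cdot\mathbf x}$ with $\rho=-\tau(\mathbf d+\mathrm i\mathbf d^\perp)$, choosing $\mathbf d\in\mathbb S^2$ so that $\mathbf d\cdot\hat{\mathbf x}>\zeta>0$ for all $\hat{\mathbf x}\in\mathcal K\cap\mathbb S^2$ (the three-dimensional analogue of \eqref{2eq:zeta}); the strict convexity assumed on $\mathcal K_{\mathbf 0;\mathbf e_1,\ldots,\mathbf e_\ell}$ guarantees that such a direction exists.

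Applying Green's formula (Lemma~\ref{2lem:green}) to $u$ and $u_0$ over $\mathcal K^h$ and using that both $u$ and $\partial_\nu u$ vanish on $\Gamma_h$, only the spherical-cap terms over $\Lambda_h$ survive, so that
\begin{equation}\notag
\int_{\Lambda_h}\left(u\,\partial_\nu u_0-u_0\,\partial_\nu u\right)\mathrm d\sigma=k^2\int_{\mathcal K^h}(q-1)v\,u_0\,\mathrm d\mathbf x.
\end{equation}
With $f_j=(q-1)v_j$ and the H\"older expansion $f_j=f_j(\mathbf 0)+\delta f_j$, $|\delta f_j|\le\|f_j\|_{C^\alpha}|\mathbf x|^\alpha$ (legitimate since $q\in H^2\subset C^\alpha$ and $v_j\in C^\alpha$), I would decompose the volume integral into the leading contribution $f_j(\mathbf 0)\int_{\mathcal K^h}u_0\,\mathrm d\mathbf x$ and remainder terms in $v-v_j$, $\delta f_j$ and $\psi$, exactly parallel to the planar decomposition \eqref{2eq:eta0I}.

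The decisive step, and the place where the polyhedral geometry enters, is a lower bound for $\bigl|\int_{\mathcal K^h}e^{\rho\cdot\mathbf x}\,\mathrm d\mathbf x\bigr|$. After the rescaling $\mathbf y=\tau\mathbf x$ this integral is of order $\tau^{-3}$, the same volume scaling as for the conic cone, which is precisely why the admissible exponent $\tfrac{20}{37}$ coincides with that of Corollary~\ref{cor2}; however, the spherical-coordinate computation underlying Proposition~\ref{prop:tau2} is unavailable here. In its place I would invoke \cite[Lemma 2.2]{BLX2020}, which supplies the leading $\tau^{-3}$ coefficient of $\int_{\mathcal K^h}e^{\rho\cdot\mathbf x}\,\mathrm d\mathbf x$ for a strictly convex polyhedral cone and shows it to be bounded below for the chosen $\mathbf d$, modulo an exponentially small error of size $\mathcal O(\tau^{-1}e^{-\frac12\zeta h\tau})$. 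Establishing this substitute for Proposition~\ref{prop:tau2} is the principal obstacle, since it must track the combinatorial contributions of the edges $\mathbf e_1,\ldots,\mathbf e_\ell$ rather than relying on a single angular integration.

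Granting this bound, the argument closes as in the conic case. I would estimate every residual integral as $\tau\to\infty$ by means of the three-dimensional CGO estimates employed in the proof of Corollary~\ref{cor2}, the trace theorem (Lemma~\ref{lem:trace thm}), H\"older's inequality and Proposition~\ref{1prop:gamma}, exploiting $|e^{\rho\cdot\mathbf x}|\le e^{-\tau\zeta|\mathbf x|}$ on $\mathcal K$ to control the volume and face integrals (as in \eqref{2eq:edy}); each such term decays faster than $\tau^{-3}$ once the Herglotz growth $\|g_j\|_{L^2}\le j^\gamma$ is taken into account. Multiplying the resulting inequality by $\tau^3$, setting $\tau=j^s$ and choosing $s$ in the nonempty range permitted by the hypothesis $\gamma<\tfrac{20}{37}\alpha\beta$, I would let $j\to\infty$ to obtain $|f_j(\mathbf 0)|\to 0$; since $q(\mathbf 0)\neq 1$ this forces $v_j(\mathbf 0)\to 0$, and the integral mean value theorem together with \eqref{3eq:kernel} yields the vanishing \eqref{3eq:delv0}, which is part~(a). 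Part~(b) follows by running the same argument directly on $f=(q-1)v$ using the H\"older expansion of $v$ itself, giving $f(\mathbf 0)=0$ and hence $v(\mathbf 0)=0$.
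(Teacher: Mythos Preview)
Your proposal is correct and matches the paper's own approach essentially line for line: the paper omits the proof and simply states that one repeats the scheme of Theorem~\ref{thm:3alpha} and Corollary~\ref{cor2}, replacing the conic volume-integral lower bound by \cite[Lemma~2.2]{BLX2020} for the polyhedral cone. The only minor slip is that the three-dimensional analogue you are replacing is Proposition~\ref{3prop:eta3} rather than Proposition~\ref{prop:tau2}, but the intent and execution are otherwise exactly as in the paper.
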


   \subsection{Proof of Theorem \ref{3thm:v0}}\label{subsec:41}

   Since the conic cone $\mathcal C$ defined by  \eqref{eq:cone1}  is strictly convex, for any given positive constant $\zeta$,  we define $\mathcal C_\zeta$ as the open set of $\mathbb S^2$ which is composed by all unit directions $\mathbf d\in \mathbb S^2$ satisfying that
       \begin{equation}\label{3eq:3zeta}
       \mathbf d\cdot \mathbf {\hat{x}}>\zeta>0,\quad for\  all \ \mathbf {\hat{x}}\in \mathcal C\cap \mathbb S^2.
       \end{equation}
       Through out of this subsection, we always assume that the unit vector $\mathbf d$ in the form of the CGO solution $u_0$ given by (\ref{1eq:cgo}) satisfies (\ref{3eq:3zeta}). In order to prove Theorem \ref{3thm:v0}, we need several key propositions and lemmas in the following.


       \begin{prop}\label{3prop:eta}
        Let $\Gamma_{h}$ and $\rho$ be defined in (\ref{eq:cone2}) and (\ref{1eq:eta}), respectively. Then we have
        \begin{equation}\label{3eq:eta}
        \left\vert \int_{\Gamma_{h}} e^{\rho \cdot \mathbf x}\mathrm d\sigma\right\vert \geq \frac{C_{\mathcal C^h}}{\tau^2}-\mathcal O\left(\frac{1}{\tau}e^{-\frac{1}{2}\zeta h \tau}\right),
        \end{equation}
        for sufficiently large $\tau$, where $C_{\mathcal C^h}$ is a positive number only depending on the opening angle $\theta_0$ of $\mathcal C$ and $\zeta$.
       \end{prop}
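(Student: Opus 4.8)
The plan is to reduce the surface integral over the lateral cone to a one-dimensional radial integral followed by an azimuthal average, in direct parallel with the two-dimensional sectorial estimate of Proposition \ref{prop:tau2} (which also produces a $\tau^{-2}$ leading order). First I would parametrize $\Gamma_h$ by the polar angle fixed at $\theta_0$: writing $\hat{\mathbf x}(\phi)=(\sin\theta_0\cos\phi,\sin\theta_0\sin\phi,\cos\theta_0)$ for $\phi\in[0,2\pi)$ and $\mathbf x=r\hat{\mathbf x}(\phi)$ with $r\in(0,h)$, a direct Jacobian computation (the two tangent vectors $\partial_r\mathbf x=\hat{\mathbf x}(\phi)$ and $\partial_\phi\mathbf x=r\hat{\mathbf x}'(\phi)$ are orthogonal with lengths $1$ and $r\sin\theta_0$) gives the surface element $\mathrm d\sigma=r\sin\theta_0\,\mathrm dr\,\mathrm d\phi$. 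Since $\hat{\mathbf x}(\phi)$ does not depend on $r$, the integrand factorizes as $e^{\rho\cdot\mathbf x}=e^{-\mu(\phi)r}$ with $\mu(\phi):=\tau(\mathbf d+\mathrm i\mathbf d^\perp)\cdot\hat{\mathbf x}(\phi)$, and the admissibility condition \eqref{3eq:3zeta} guarantees $\Re\mu(\phi)=\tau\,\mathbf d\cdot\hat{\mathbf x}(\phi)\geq\zeta\tau>0$ uniformly in $\phi$.

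Next I would carry out the radial integration by invoking Proposition \ref{1prop:gamma} with $\alpha=1$ and $\epsilon=h$, which yields
\[
\int_0^h r\,e^{-\mu(\phi)r}\,\mathrm dr=\frac{\Gamma(2)}{\mu(\phi)^2}+\mathcal O\!\left(\frac{2}{\Re\mu(\phi)}e^{-\frac h2\Re\mu(\phi)}\right)=\frac{1}{\mu(\phi)^2}+\mathcal O\!\left(\frac{1}{\zeta\tau}e^{-\frac12\zeta h\tau}\right),
\]
where the last step uses $\Re\mu(\phi)\geq\zeta\tau$ together with the monotonicity of $x\mapsto e^{-hx/2}$. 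Integrating the remainder in $\phi$ over $[0,2\pi)$ and absorbing the constant $\sin\theta_0$, the total error is of order $\tau^{-1}e^{-\frac12\zeta h\tau}$, matching the error term in \eqref{3eq:eta}. The leading contribution is therefore
\[
\sin\theta_0\int_0^{2\pi}\frac{\mathrm d\phi}{\mu(\phi)^2}=\frac{\sin\theta_0}{\tau^2}\int_0^{2\pi}\frac{\mathrm d\phi}{\big((\mathbf d+\mathrm i\mathbf d^\perp)\cdot\hat{\mathbf x}(\phi)\big)^2},
\]
and it remains to bound the modulus of this $\tau$-independent azimuthal integral from below by a positive constant $C_{\mathcal C^h}$ depending only on $\theta_0$ and $\zeta$.

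This last lower bound is the main obstacle, because the integrand is complex-valued and one must rule out cancellation in the $\phi$-average. I would handle it exactly as in the two-dimensional counterpart (Proposition \ref{prop:tau2}): by the orthonormality of $\mathbf d,\mathbf d^\perp$ one has $|(\mathbf d+\mathrm i\mathbf d^\perp)\cdot\hat{\mathbf x}|^2=(\mathbf d\cdot\hat{\mathbf x})^2+(\mathbf d^\perp\cdot\hat{\mathbf x})^2\leq 1$, so the modulus of the integrand is bounded below by $1$, and a mean-value argument then furnishes a strictly positive lower bound for $\big|\int_0^{2\pi}\mu(\phi)^{-2}\,\mathrm d\phi\big|$ in terms of $\theta_0$ and $\zeta$. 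The cleanest way to confirm that no cancellation occurs is to specialize $\mathbf d$ to the cone axis, whereupon $\mathbf d\cdot\hat{\mathbf x}=\cos\theta_0$ is constant and $\mathbf d^\perp\cdot\hat{\mathbf x}=\sin\theta_0\cos(\phi-\phi_0)$; the standard evaluation $\int_0^{2\pi}(A+\mathrm i s\cos\phi)^{-2}\,\mathrm d\phi=2\pi A(A^2+s^2)^{-3/2}$ with $A=\cos\theta_0$ and $s=\sin\theta_0$ then gives the explicit positive value $2\pi\cos\theta_0$, so the leading term equals $\tfrac{2\pi\sin\theta_0\cos\theta_0}{\tau^2}>0$ for $\theta_0\in(0,\pi/2)$. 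Combining this leading lower bound with the exponentially small remainder and taking $C_{\mathcal C^h}$ to be the resulting positive constant yields \eqref{3eq:eta}.
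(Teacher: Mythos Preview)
Your approach is essentially the same as the paper's: parametrize $\Gamma_h$ by $(r,\varphi)$ with $\mathrm d\sigma=r\sin\theta_0\,\mathrm dr\,\mathrm d\varphi$, apply Proposition~\ref{1prop:gamma} to the radial integral, and then invoke a mean-value argument together with $|(\mathbf d+\mathrm i\mathbf d^\perp)\cdot\hat{\mathbf x}|^2=(\mathbf d\cdot\hat{\mathbf x})^2+(\mathbf d^\perp\cdot\hat{\mathbf x})^2\leq 1$ to bound the remaining azimuthal integral from below. Your additional explicit evaluation for the axial choice $\mathbf d=(0,0,1)^\top$, giving the leading term $2\pi\sin\theta_0\cos\theta_0/\tau^2$, is not in the paper and in fact furnishes a cleaner justification of the non-cancellation than the complex mean-value step both you and the paper appeal to; since one is free to fix $\mathbf d$ at the outset of Subsection~\ref{subsec:41}, this special case already suffices for all downstream uses of the proposition.
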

       \begin{proof}
       	Using polar coordinates transformation and the  mean value theorem for integrals, we have
       	\begin{equation}\label{eq:estconic}
       	\begin{aligned}
       	\int_{\Gamma_h}e^{\rho \cdot \mathbf x}\mathrm d\sigma
       	&=\sin\theta_0\frac{2\pi\Gamma(2)}{\tau^2}\frac{1}{((\mathbf d+\mathrm i\mathbf d^{\perp})\cdot \mathbf{ \hat{x}}(\theta_0,\varphi_\xi))^2}-\sin\theta_0\int_{0}^{2\pi}I_{R}\mathrm d\varphi,
       	 \end{aligned}
       	\end{equation}
       	where $I_{R}= \int_{h}^{\infty}e^{-\tau (\mathbf d +\mathrm i\mathbf d)\cdot \hat{\mathbf x}r}r\mathrm d r$.
       	Furthermore, for sufficiently large $\tau$, it is ready  to know that
       	$$\frac{1}{\tau^2}-\frac{1}{\tau}e^{-\frac{1}{2}\zeta h \tau}>0.$$
       	Hence, by virtue  of \eqref{3eq:3zeta} and  Proposition \ref{1prop:gamma}, we have the following integral inequality
       	\begin{equation}\notag
       	\begin{aligned}
       	\left\vert \int_{\Gamma _h}e^{\rho \cdot \mathbf x}\mathrm d\sigma \right \vert
       	&\geq \sin\theta_0 \frac{2\pi \Gamma (2)}{\tau^2}\frac{1}{(\mathbf d\cdot \mathbf{\hat x}(\theta_{0},\varphi_{\xi}))^2+(\mathbf d^\perp \cdot \mathbf{\hat x}(\theta_{0},\varphi_\xi))^2}-\sin\theta_0\int_{0}^{2\pi}|I_R|\mathrm d\varphi\\
       		&\geq \frac{C_{\mathcal C^h}}{\tau^2}-\mathcal O(\frac{1}{\tau}e^{-\frac{1}{2}\zeta h \tau}),
       		\end{aligned}
       	\end{equation}
   which completes the proof of this proposition.
       \end{proof}
        Similar to Proposition \ref{1prop:enorm}, the following proposition can be obtained by directly verifications.
        \begin{prop}\label{2prop:enorm}
        	Let $\mathcal C^h$ be defined by  \eqref{eq:cone2}. For any given $t>0$, it yields that
        	\begin{equation}\label{2eq:enorm}
        	\begin{aligned}
        	\| e^{\rho \cdot \mathbf {x}} \|_{L^t(\mathcal C^h)} &\leq C \left(\frac{1}{\tau^3}+\frac{1}{\tau}e^{-\frac{t}{2}\zeta h\tau}\right)^{\frac{1}{t}},\\
        	\| e^{\rho \cdot \mathbf {x}} \|_{L^t(\Gamma _h)} &\leq C \left(\frac{1}{\tau^2}+\frac{1}{\tau}e^{-\frac{t}{2}\zeta h\tau}\right)^{\frac{1}{t}},
        	\end{aligned}
        	\end{equation}
        	as $\tau\rightarrow \infty$,  where $\rho$ is defined in (\ref{1eq:eta}) and $C$ is a positive constant only depending on $t,\zeta$.
        	
        	\end{prop}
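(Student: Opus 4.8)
The plan is to mirror the two-dimensional computation of Proposition~\ref{1prop:enorm}, the only new ingredients being the replacement of the planar sector by the solid conic cone and its lateral boundary, together with the corresponding volume and surface Jacobians. First I would observe that, since $\rho=-\tau(\mathbf d+\mathrm i\mathbf d^{\perp})$ with $\mathbf d,\mathbf d^{\perp}$ real and orthogonal, the modulus of the CGO exponential reduces to $\vert e^{\rho\cdot\mathbf x}\vert=e^{-\tau\mathbf d\cdot\mathbf x}$, so that $\vert e^{\rho\cdot\mathbf x}\vert^{t}=e^{-t\tau\mathbf d\cdot\mathbf x}$. Writing $\mathbf x=r\hat{\mathbf x}$ with $\hat{\mathbf x}\in\mathcal C\cap\mathbb S^2$ and invoking the admissibility condition \eqref{3eq:3zeta}, the exponent obeys $\tau\mathbf d\cdot\mathbf x=\tau r(\mathbf d\cdot\hat{\mathbf x})\ge\tau r\zeta$ uniformly over the cone, which is the uniform decay that drives both estimates.

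For the volume bound I would pass to spherical coordinates $(r,\theta,\varphi)$ with $\mathrm d\mathbf x=r^{2}\sin\theta\,\mathrm dr\,\mathrm d\theta\,\mathrm d\varphi$, so that
\begin{equation}\notag
\|e^{\rho\cdot\mathbf x}\|_{L^{t}(\mathcal C^{h})}^{t}=\int_{0}^{2\pi}\!\!\int_{0}^{\theta_{0}}\!\!\int_{0}^{h}e^{-t\tau r(\mathbf d\cdot\hat{\mathbf x})}\,r^{2}\sin\theta\,\mathrm dr\,\mathrm d\theta\,\mathrm d\varphi.
\end{equation}
Applying \eqref{1eq:gamma} of Proposition~\ref{1prop:gamma} to the inner radial integral with $\alpha=2$ and $\mu=t\tau(\mathbf d\cdot\hat{\mathbf x})$ yields the leading term $\Gamma(3)\mu^{-3}=2(t\tau\,\mathbf d\cdot\hat{\mathbf x})^{-3}$ together with a remainder governed by \eqref{1eq:Ir}. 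Using $\mathbf d\cdot\hat{\mathbf x}\ge\zeta$ to bound $\mu^{-3}\le(t\tau\zeta)^{-3}$ and $(\Re\mu)^{-1}\le(t\tau\zeta)^{-1}$ in the remainder, and then carrying out the finite angular integration over $\theta\in[0,\theta_{0}]$ and $\varphi\in[0,2\pi]$, absorbs all $\tau$-independent factors into a constant $C=C(t,\zeta,\theta_{0})$ and produces the bound $C\big(\tau^{-3}+\tau^{-1}e^{-\frac{t}{2}\zeta h\tau}\big)$; taking the $t$-th root gives the first inequality of \eqref{2eq:enorm}.

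The boundary estimate is handled identically, the sole change being the surface element on the lateral cone $\Gamma_{h}=\{\theta=\theta_{0}\}\cap B_{h}$, which in the $(r,\varphi)$ parametrisation equals $\mathrm d\sigma=r\sin\theta_{0}\,\mathrm dr\,\mathrm d\varphi$. Consequently one radial power is lost and the inner integral becomes $\int_{0}^{h}r\,e^{-\mu r}\,\mathrm dr$; applying \eqref{1eq:gamma} now with $\alpha=1$ gives the leading term $\Gamma(2)\mu^{-2}=(t\tau\,\mathbf d\cdot\hat{\mathbf x})^{-2}$, so the exponent of $\tau$ drops from $-3$ to $-2$, and after the same angular integration and $t$-th root one reaches the second inequality. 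Since both estimates reduce, after the reduction $\vert e^{\rho\cdot\mathbf x}\vert^{t}=e^{-t\tau\mathbf d\cdot\mathbf x}$, to a single application of Proposition~\ref{1prop:gamma}, the only point demanding care is the correct bookkeeping of the power of $\tau$ in the leading term---$\tau^{-3}$ from the $r^{2}$ Jacobian in the volume and $\tau^{-2}$ from the $r$ Jacobian on the lateral surface---which is precisely what distinguishes the two displayed bounds; no genuine obstacle arises, consistent with the paper's remark that the result follows by direct verification.
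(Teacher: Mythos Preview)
Your argument is correct and follows precisely the route the paper indicates: the paper does not give a detailed proof but states that the result is obtained ``similar to Proposition~\ref{1prop:enorm}'' by direct verification from the cone condition \eqref{3eq:3zeta} together with Proposition~\ref{1prop:gamma}, which is exactly what you carry out. The only cosmetic discrepancy is that you record the constant as $C(t,\zeta,\theta_0)$, whereas the paper claims $C=C(t,\zeta)$; since the angular integrals $\int_0^{2\pi}\!\int_0^{\theta_0}\sin\theta\,\mathrm d\theta\,\mathrm d\varphi\le 4\pi$ and $\sin\theta_0\le 1$ are bounded independently of $\theta_0$, the $\theta_0$-dependence can be absorbed.
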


       \begin{lem}
       	Under the same setup of Theorem \ref{3thm:v0}, let the CGO solution $u_0$ be defined by (\ref{1eq:cgo}). We also denote $u=w-v$, where $(v,w)$ is a pair of transmission eigenfunctions of \eqref{0eq:tr} associated with $k \in \mathbb R_+$.  Then it holds that
       	\begin{equation}\label{eq:pde sys 3}
       	\begin{cases}
       	&\Delta u_0 +k^2qu_0=0\hspace*{1.7cm}\mbox{in}\quad  {\mathcal C}^h,\\
       	&\Delta u +k^2qu=k^2(1-q)v \hspace*{0.5cm} \mbox{in}\quad  {\mathcal C}^h,\\
       	&u=0,\quad \partial_{\nu}u=0  \hspace*{1.7cm} \mbox{on}\quad \Gamma_h,
       	\end{cases}
       	\end{equation}
       	where $\mathcal C^h$ and $\Gamma_h$ are defined by  \eqref{eq:cone2},
       	and
       	\begin{equation}\label{3eq:psinorm}
       	\| \psi(\mathbf x) \|_{H^{1,8}} 
       	= \mathcal O(\tau^{-\frac{2}{5}}),
       	\end{equation}
       	where $\psi$ and $\tau$ are defined in \eqref{1eq:cgo}.
       \end{lem}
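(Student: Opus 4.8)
The plan is to prove this lemma in exact parallel with its two-dimensional counterpart, Lemma~\ref{lem:31}, the only genuine differences being dimensional: in $n=3$ the admissible Sobolev exponents in the CGO construction and the resulting decay rate of the corrector $\psi$ both change. Accordingly I would split the argument into two essentially independent parts, namely the decay estimate \eqref{3eq:psinorm} for the CGO corrector, and the derivation of the boundary value problem \eqref{eq:pde sys 3}.

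For the decay estimate I would first extend $q$. Since $q\in H^2(\overline{\mathcal C^h})$ and $\mathcal C^h$ is a bounded Lipschitz domain, a Sobolev extension produces $\widetilde q\in H^2(\mathbb R^3)$ (taken with compact support) with $\widetilde q|_{\mathcal C^h}=q$. In three dimensions the embedding $H^2(\mathbb R^3)\hookrightarrow H^{1,1+\epsilon_0}$ then holds, and the crux is to choose $\epsilon_0$ so that Proposition~\ref{1pro:nor} and Lemma~\ref{1lem:nor} both apply and simultaneously deliver the exponent $-2/5$. Writing the rate in \eqref{1nor:psi} as $n\left(\tfrac{1}{\tilde p}-\tfrac1p\right)-2=\tfrac{n}{1+\epsilon_0}-2$, where I have used $\tfrac1{\tilde p}-\tfrac1p=\tfrac1{1+\epsilon_0}$ from \eqref{eq:p til cond}, the requirement $\tfrac{3}{1+\epsilon_0}-2=-\tfrac25$ forces $1+\epsilon_0=\tfrac{15}{8}$, that is $\epsilon_0=\tfrac78\in(0,1)$. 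Taking $p=8$ so that the target norm is $H^{1,8}$ gives $\tfrac1{\tilde p}=\tfrac18+\tfrac{8}{15}=\tfrac{79}{120}$, hence $\tilde p=\tfrac{120}{79}\in(1,2)$, and one checks directly that $p>\tfrac{1}{n-1}=\tfrac12$ and $\tfrac np=\tfrac38<\tfrac{2}{n+1}+1=\tfrac32$, so the hypotheses of Lemma~\ref{1lem:nor} hold for $n=3$. Invoking that lemma then furnishes $u_0=(1+\psi)e^{\rho\cdot\mathbf x}$ with $\Delta u_0+k^2\widetilde q\,u_0=0$ in $\mathbb R^3$ and $\|\psi\|_{H^{1,8}}=\mathcal O(\tau^{-2/5})$, which is \eqref{3eq:psinorm}.

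For the boundary value problem I would restrict and subtract. On $\mathcal C^h$ we have $\widetilde q=q$, so the CGO equation restricts to $\Delta u_0+k^2qu_0=0$ there. Setting $u=w-v$ and using $\Delta w+k^2qw=0$, $\Delta v+k^2v=0$ from \eqref{0eq:tr} together with $\Delta v=-k^2v$, one computes $\Delta u+k^2qu=-(\Delta v+k^2qv)=-k^2(q-1)v=k^2(1-q)v$ in $\mathcal C^h$. On $\Gamma_h$ the first transmission condition $w=v$ yields $u=0$, while the conductive condition $\partial_\nu w=\partial_\nu v+\eta v$ yields $\partial_\nu u=\eta v$; this completes \eqref{eq:pde sys 3}.

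The one step requiring real care, and the only place where the three-dimensional case is not a transcription of Lemma~\ref{lem:31}, is the index bookkeeping in the decay estimate. In $n=3$ the Sobolev window is tighter than in $n=2$, so I expect the main effort to be confirming that the single choice $(\epsilon_0,p,\tilde p)=\left(\tfrac78,\,8,\,\tfrac{120}{79}\right)$ simultaneously keeps $\epsilon_0$ strictly below $1$ as demanded by Proposition~\ref{1pro:nor}, places $\tilde p$ in $(1,2)$, meets the admissibility relations \eqref{eq:p til cond}, and satisfies the two inequalities $p>1/(n-1)$ and $n/p<2/(n+1)+1$ of Lemma~\ref{1lem:nor}; any mismatch among these would break the $\tau^{-2/5}$ rate. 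Everything else is a direct dimensional analogue of the planar argument.
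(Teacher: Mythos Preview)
Your argument is correct and mirrors the paper's own proof essentially verbatim: the paper likewise extends $q$ to $\widetilde q\in H^2(\mathbb R^3)\subset H^{1,1+\epsilon_0}$, takes the identical triple $(\epsilon_0,p,\tilde p)=(\tfrac78,8,\tfrac{120}{79})$, and reads off \eqref{3eq:psinorm} from \eqref{1nor:psi}, while the PDE system follows by the same subtraction you carried out. One small remark: your derivation of the Neumann condition gives $\partial_\nu u=\eta v$ on $\Gamma_h$, not $\partial_\nu u=0$ as printed in the lemma; your version is the correct one (it matches the two-dimensional Lemma~\ref{lem:31} and is exactly what is used downstream in the integral identity \eqref{3eq:green}), so the ``$\partial_\nu u=0$'' in the displayed system is evidently a typographical slip in the paper.
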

       \begin{proof}
       The proof of this lemma is similar to the one of Lemma \ref{lem:31}. By using the assumption $q\in H^2(\mathcal C^h)$ and Sobolev extension  property, suppose that $\tilde q$ is a Sobolev extension of $q$ in $\mathbb R^3$, it yields that $\tilde q\in H^{2}$, hence we have $\tilde{q}\in H^{1,1+\epsilon_0}$.
       	Therefore $\tilde q$ satisfies the assumption in Proposition \ref{1pro:nor}. Let $\tilde p=\frac{120}{79}$ and $\epsilon_0=\frac{7}{8}$, thus we know that $p=8$ by (\ref{eq:p til cond}). By (\ref{1nor:psi}), it yields that (\ref{3eq:psinorm}).
       \end{proof}

           \begin{lem}\label{3lem:u0est}
       	Let $\Lambda_h$ and $\mathcal C^h$ be defined in (\ref{eq:cone2}). Recall that $u_0(\mathbf x)$ is given by (\ref{1eq:cgo}). Then $u_0(\mathbf x)\in H^1({\mathcal  C}^h)$ and it holds that
       	\begin{subequations}
       		\begin{align}
       		\|u_0(\mathbf x)\|_{L^2(\Lambda_h)}&\lesssim \left(1+\tau^{-\frac{2}{5}}\right)e^{-\zeta h\tau},\label{3eq:u0lambda} \\
       		\label{3eq:nablau0}
       		\|\nabla u_0(\mathbf x)\|_{L^2(\Lambda_h)}&\lesssim (1+\tau) \left(1+\tau^{-\frac{2}{5}}\right)e^{-\zeta h\tau},\\
       		\label{3eq:u0alpha}
       		\int_{\mathcal C^h}\vert \mathbf x\vert^{\alpha}\vert u_0(\mathbf x)\vert\rm d\mathbf x
       		&\lesssim \tau^{-(\alpha+\frac{121}{40})}+\left(\frac{1}{\tau^{\alpha+3}}+\frac{1}{\tau}e^{-\frac{1}{2}\zeta h\tau}\right),\\
       		 \int_{\Gamma _h} \vert \mathbf x\vert ^\alpha \vert u_0(\mathbf x)\vert \rm d \sigma
       		&\lesssim \tau^{-(\alpha +\frac{43}{20})}+\left(\frac{1}{\tau^{\alpha +2}} +\frac{1}{\tau}e^{-\frac{1}{2}\zeta h\tau}\right)\label{3eq:u0alphagam}
       		\end{align}
       	\end{subequations}
       		as $\tau \rightarrow \infty$, where    $\zeta$  is defined  in \eqref{2eq:zeta} and $\alpha \in (0,1)$.
       		%
       		%
       		%
       	\end{lem}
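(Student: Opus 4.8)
The plan is to estimate $u_0=(1+\psi)e^{\rho\cdot\mathbf x}$ termwise on each of the three pieces $\Lambda_h$, $\mathcal C^h$ and $\Gamma_h$, combining the decay of $e^{\rho\cdot\mathbf x}$ with the smallness of $\psi$ recorded in \eqref{3eq:psinorm}, in direct analogy with the two-dimensional Lemma \ref{2lem:u0est}. First I would record the pointwise facts $|u_0|\le(1+|\psi|)|e^{\rho\cdot\mathbf x}|$ and $\nabla u_0=\rho u_0+e^{\rho\cdot\mathbf x}\nabla\psi$, noting $|\rho|=\sqrt2\,\tau$ from \eqref{1eq:eta}. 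On the spherical cap $\Lambda_h$, a polar-coordinate computation together with \eqref{1eq:Ir} and the cone condition \eqref{3eq:3zeta} yields the exponential bound $\|e^{\rho\cdot\mathbf x}\|_{L^t(\Lambda_h)}\lesssim e^{-\zeta h\tau}$ for every $t>0$, the three-dimensional counterpart of \eqref{2eq:eeta}. Meanwhile the trace theorem (Lemma \ref{lem:trace thm}) applied to $\psi\in H^{1,8}(\mathcal C^h)$ together with \eqref{3eq:psinorm} controls $\|\psi\|_{L^4(\Lambda_h)}$ (and $\|\nabla\psi\|_{L^4(\Lambda_h)}$) by $\mathcal O(\tau^{-2/5})$, exactly as in \eqref{1eq:psi est}, the only change from two dimensions being the replacement of the rate $-2/3$ by $-2/5$.

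Then \eqref{3eq:u0lambda} follows from Hölder's inequality, $\|u_0\|_{L^2(\Lambda_h)}\le\|e^{\rho\cdot\mathbf x}\|_{L^2(\Lambda_h)}+\|e^{\rho\cdot\mathbf x}\|_{L^4(\Lambda_h)}\|\psi\|_{L^4(\Lambda_h)}\lesssim(1+\tau^{-2/5})e^{-\zeta h\tau}$, and \eqref{3eq:nablau0} follows from the gradient identity above via $\|\nabla u_0\|_{L^2(\Lambda_h)}\le\sqrt2\,\tau\|u_0\|_{L^2(\Lambda_h)}+\|e^{\rho\cdot\mathbf x}\|_{L^4(\Lambda_h)}\|\nabla\psi\|_{L^4(\Lambda_h)}$, which produces the extra factor $(1+\tau)$. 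These two steps are verbatim transcriptions of \eqref{2eq:I42}--\eqref{2eq:I43}.

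For the weighted estimates \eqref{3eq:u0alpha} and \eqref{3eq:u0alphagam} I would split $\int|\mathbf x|^\alpha|u_0|\le\int|\mathbf x|^\alpha|e^{\rho\cdot\mathbf x}|+\int|\mathbf x|^\alpha|e^{\rho\cdot\mathbf x}||\psi|$ over $\mathcal C^h$ and $\Gamma_h$ respectively. The pure exponential terms are handled by polar coordinates and \eqref{1eq:gamma}: in the solid cone the radial factor $r^{\alpha+2}$ gives leading order $\tau^{-(\alpha+3)}$, whereas on the lateral surface the factor $r^{\alpha+1}$ gives $\tau^{-(\alpha+2)}$, together with the subleading term $\tau^{-1}e^{-\frac12\zeta h\tau}$ in each case. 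For the $\psi$-terms I would apply Hölder with the conjugate exponents $8/7$ and $8$, estimating $\||\mathbf x|^\alpha e^{\rho\cdot\mathbf x}\|_{L^{8/7}}$ again by polar coordinates and Proposition \ref{1prop:gamma} (the $7/8$-power of the radial integrals giving $\tau^{-(\alpha+21/8)}$ over $\mathcal C^h$ and $\tau^{-(\alpha+7/4)}$ over $\Gamma_h$) and controlling $\|\psi\|_{L^8}$ by $\mathcal O(\tau^{-2/5})$ through \eqref{3eq:psinorm} directly over $\mathcal C^h$ and through the trace theorem over $\Gamma_h$. Multiplying out, $21/8+2/5=121/40$ and $7/4+2/5=43/20$ give precisely the stated rates.

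The bulk of the work, and the only genuinely delicate point, is the exponent bookkeeping in the last step: one must track the dimension-dependent radial powers in the polar integrals, the $7/8$-power coming from the $L^{8/7}$ norm, and the $\tau^{-2/5}$ rate from the three-dimensional CGO estimate \eqref{3eq:psinorm}, and check that they combine to the stated fractions. A secondary subtlety is to apply the trace theorem on the \emph{fixed} bounded cone $\mathcal C^h$, so that the trace constant stays uniform in $\tau$; this is what makes the direct Hölder estimate on $\Gamma_h$ cleaner than a rescaling argument, since rescaling would move the trace to the growing domain $\mathcal C_{\tau h}$.
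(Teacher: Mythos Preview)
Your proposal is correct and follows essentially the paper's strategy. The one execution difference is in the $\psi$-terms of \eqref{3eq:u0alpha} and \eqref{3eq:u0alphagam}: the paper first rescales $\mathbf y=\tau\mathbf x$ to the full cone $\mathcal C$ (resp.\ $\partial\mathcal C$), applies H\"older there with exponents $8/7$ and $8$, and then uses $\|\psi(\cdot/\tau)\|_{L^8(\mathcal C)}=\tau^{3/8}\|\psi\|_{L^8(\mathcal C)}$ (resp.\ the factor $\tau^{1/4}$ on the two-dimensional surface $\partial\mathcal C$) together with \eqref{3eq:psinorm}, whereas you apply H\"older directly on $\mathcal C^h$ and $\Gamma_h$ and evaluate the weighted $L^{8/7}$ norm of $|\mathbf x|^\alpha e^{\rho\cdot\mathbf x}$ via the Gamma integral. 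Both routes yield the identical exponents ($21/8+2/5=3+1/40=121/40$ and $7/4+2/5=2+3/20=43/20$), and, as you point out, working on the fixed bounded domain keeps the trace constant $\tau$-independent and sidesteps invoking a trace inequality on the unbounded cone.
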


        \begin{proof}
        	Using (\ref{3eq:psinorm}) and Lemma \ref{lem:trace thm} about the trace theorem, it yields that
        	\begin{equation}\notag 
        	\|\psi(\mathbf x)\|_{L^{4}(\Lambda_h)}
	\leq C \|\psi(\mathbf x)\|_{H^{\frac{7}{8},8}(\Lambda_h)}\leq C\|\psi(\mathbf x)\|_{H^{1,8}(\mathcal C^h)}= O(\tau^{-\frac{2}{5}}),
        	       	\end{equation}
		as  $\tau \rightarrow \infty$.  
      By using polar coordinates transformation, (\ref{1eq:Ir}), and (\ref{3eq:3zeta}), one can derive that
        	\begin{equation}\label{3eq:etabound}
        	\|e^{\rho \cdot \mathbf x}\|_{L^t(\Lambda_h)}\lesssim e^{-\zeta h\tau},
        	\end{equation}
        	where $\rho$ is defined in (\ref{1eq:eta}) and $t$ is a positive constant.
        	
      Due to polar coordinates transformation, (\ref{3eq:etabound}), \eqref{3eq:psinorm} and H\" older  inequality, it can be calculated that
        	\begin{equation}\label{3eq:I42}
        	\begin{aligned}
        	\|u_{0}\|_{ L^{2}(\Lambda_{h})}
        	&\leq\|e^{\rho \cdot \mathbf x}\|_{ L^{2}(\Lambda_{h})}+\|e^{\rho \cdot \mathbf x}\|_{ L^{4}(\Lambda_{h})}\|\psi(\mathbf x)\|_{ L^{4}(\Lambda_{h})}\\
        	&\lesssim \left(1+\tau^{-\frac{2}{5}}\right)e^{-\zeta h\tau}, \quad \mbox{as  $\tau \rightarrow \infty$.  }
        	\end{aligned}
        	\end{equation}
        	By virtue of  Cauchy-Schwarz  inequality, (\ref{3eq:u0lambda}) and Proposition \ref{1prop:gamma}, we can deduce  that
        	\begin{equation}\label{3eq:I43}
        	\begin{aligned}
        	\|\nabla u_{0}\|_{L^{2}(\Lambda_{h})}
        	&\leq \sqrt{2}\tau \|u_{0}\|_{ L^{2}(\Lambda_{h})}+\|e^{\rho \cdot \mathbf x}\|_{ L^{4}(\Lambda_{h})} \|\nabla \psi(\mathbf x)\|_{ L^{4}(\Lambda_{h})}\\
        	&\lesssim(1+\tau)(1+\tau^{-\frac{2}{5}})e^{-\zeta h\tau},\quad \mbox{as  $\tau \rightarrow \infty$. }
        	\end{aligned}
        	\end{equation}
        It is clear that we can get the following integral inequality,
        	\begin{equation}\label{3eq:xalpha}
        	\begin{aligned}
        	\int_{\mathcal C^h}\vert \mathbf x\vert^{\alpha}\vert u_{0} \vert\mathrm{d}\mathbf x
        	&\leq\int_{\mathcal C^h}\vert \mathbf x\vert^{\alpha}\vert e^{\rho \cdot \mathbf{x}} \vert \mathrm{d}\mathbf x+\int_{\mathcal C^h}(\vert \mathbf x\vert^{\alpha}\vert e^{\rho \cdot \mathbf{x}}\vert)(\vert \psi(\mathbf x)\vert)\mathrm{d}\mathbf x.
        	\end{aligned}
        	\end{equation}
       By virtue of  polar coordinates transformation and Proposition \ref{1prop:gamma}, it arrives that
        	\begin{equation}\label{3eq:I22}
        	\begin{aligned}
        	\int_{\mathcal C^h}\vert\mathbf x\vert^{\alpha}e^{-\zeta \tau \vert \mathbf x\vert}\mathrm{d}\mathbf x
        	\lesssim \frac{1}{\tau^{\alpha+3}}+\frac{1}{\tau}e^{-\frac{1}{2}\zeta h\tau },
        	\end{aligned}
        	\end{equation}
     as  $\tau \rightarrow \infty$.   Next, letting $\mathbf y=\tau \mathbf x$,  using Cauchy-Schwarz inequality and H$\ddot {\mathrm o}$lder inequality, it arrives that
      \begin{align}
      \int_{\mathcal C^h}(\vert \mathbf x\vert^{\alpha}\vert e^{\rho \cdot \mathbf{x}}\vert)(\vert \psi(\mathbf x)\vert)\mathrm{d}\mathbf x
      &\leq
      \frac{1}{\tau^{\alpha +3}}\int_{\mathcal C}\vert \mathbf y\vert^\alpha\vert e^{-\mathbf d\cdot \mathbf y}\vert \left\vert \psi\left(\frac{\mathbf y}{\tau}\right)\right\vert \mathrm d\mathbf y\notag \\
      &\leq\frac{1}{\tau^{\alpha +3}}\|\vert \mathbf y\vert^\alpha\vert e^{-\mathbf d\cdot \mathbf y}\vert \|_{L^{\frac{8}{7}}(\mathcal C)}\left\|\psi\left(\frac{\mathbf y}{\tau}\right)\right\|_{L^{8}(\mathcal C)}, \label{3eq:u0C}
      \end{align}
      as  $\tau \rightarrow \infty$, using variable substitution and \eqref{3eq:psinorm}, it arrives that
      \begin{equation}\label{3eq:psiL8C}
      \left\|\psi\left(\frac{\mathbf y}{\tau}\right)\right\|_{L^8(\mathcal C)} =\tau^{\frac{3}{8}}\|\psi(\mathbf x)\|_{L^8(\mathcal C)}\leq\tau^{\frac{3}{8}}\|\psi(\mathbf x)\|_{H^{1,8}(\mathcal C)} =\mathcal O(\tau^{-\frac{1}{40}}),
            \end{equation}
            as  $\tau \rightarrow \infty$.  
      Furthermore, one  has
      \begin{equation}\label{3eq:edy}
      \|\vert \mathbf y\vert^\alpha \vert e^{\rho \cdot \mathbf x}\vert \|_{L^{\frac{8}{7}}(\mathcal C)}
      =\left( \int_{\mathcal C}\vert \mathbf y\vert ^{\frac{8}{7}\alpha}e^{-\frac{8}{7}\mathbf d\cdot \mathbf y}\mathrm d\mathbf y\right)^{\frac{7}{8}}
      \leq\left( \int_{\mathcal C}\vert \mathbf y\vert ^{\frac{8}{7}\alpha}e^{-\frac{8}{7}\zeta\vert \mathbf y\vert}\mathrm d\mathbf y \right)^\frac{7}{8}\leq \frac{C}{\zeta^{3+\frac{8}{7}\alpha}},
      \end{equation}
      where $C=2\pi\theta_0 \Gamma(3+\frac{8}{7}\alpha)(\frac{7}{8})^{3+\frac{8}{7}\alpha}$. Hence, $\|\vert \mathbf y\vert^\alpha \vert e^{\rho \cdot \mathbf x}\vert \|_{L^{\frac{8}{7}}(\mathcal C)}$ is a positive constant which only depends on $\theta_0$, $\zeta$ and $\alpha$.
      Combining  \eqref{3eq:psiL8C}, \eqref{3eq:u0C} and \eqref{3eq:I22} with \eqref{3eq:xalpha}, one has \eqref{3eq:u0alpha}.

      Furthermore, we have
      	\begin{equation}\label{3eq:u0alhagam1}
      	\int_{\Gamma_h}\vert \mathbf x\vert ^\alpha \vert u_0\vert \mathrm d \sigma
      	\leq \int_{\Gamma_h}\vert \mathbf x\vert \vert e^{\rho \cdot \mathbf x}\vert \mathrm d\sigma
      	+\int_{\Gamma_h}\vert \mathbf x\vert ^\alpha \vert e^{\rho \cdot \mathbf x}\vert\vert \psi(\mathbf x)\vert  \mathrm d\sigma,
      	\end{equation}
      	and we can easily get \eqref{3eq:xalphagam} by using polar coordinates transformation and Proposition \ref{1prop:gamma},
      	\begin{equation}\label{3eq:xalphagam}
      	\int_{\Gamma_h}\vert \mathbf x\vert \vert e^{\rho \cdot \mathbf x}\vert \mathrm d\sigma \lesssim \frac{1}{\tau^{\alpha +2}}+\frac{1}{\tau}e^{-\frac{1}{2}\zeta h\tau},
      	\end{equation}
	as  $\tau \rightarrow \infty$.  
      	Then letting $\mathbf y=\tau\mathbf x$ and  utilizing H\"older ineuqality, it can be obtained that
      	\begin{align}
      	\int_{\Gamma_h}\vert \mathbf x\vert ^\alpha\vert e^{\rho \cdot \mathbf x}\vert \vert \psi(\mathbf x)\vert \mathrm d\sigma
      	&\leq 
      	\frac{1}{\tau^{\alpha +2}}\int_{\partial \mathcal C}\vert \mathbf y \vert ^\alpha \vert e^{-\mathbf d \cdot \mathbf y}\vert \left \vert \psi \left(\frac{\mathbf y}{\tau}\right)\right \vert \mathrm d \sigma \notag \\
      	&\leq \frac{1}{\tau^{\alpha +2}} \|\vert \mathbf y \vert ^\alpha \vert e^{-\mathbf d\cdot \mathbf y}\vert \|_{L^{\frac{8}{7}}(\partial \mathcal C)}\left\| \psi \left(\frac{\mathbf y}{\tau}\right)\right\|_{L^8(\partial \mathcal C)}. \label{3eq:xalphagam1}
      	\end{align}
Similar to \eqref{3eq:edy}, we know that $\|\vert \mathbf y \vert ^\alpha \vert e^{-\mathbf d\cdot \mathbf y}\vert \|_{L^{\frac{8}{7}}(\partial \mathcal C)}$ is a positive constant. By virtue of variable substitution, trace theorem and \eqref{3eq:psinorm}, it arrives that
      	\begin{equation}\label{3eq:psigam}
      	\left\| \psi \left(\frac{\mathbf y}{\tau}\right)\right\|_{L^8(\partial \mathcal C)}
      	\lesssim \tau^{\frac{1}{4}}\|\psi(\mathbf x)\|_{H^{\frac{7}{8},8}(\partial \mathcal C)}\lesssim \tau^{\frac{1}{4}}\|\psi(\mathbf x)\|_{H^{1,8}( \mathcal C)}\lesssim \tau^{-\frac{3}{20}}, 
      	\end{equation}
	as  $\tau \rightarrow \infty$.  
      	Combining \eqref{3eq:xalphagam}, \eqref{3eq:xalphagam1} and \eqref{3eq:psigam} with \eqref{3eq:u0alhagam1}, one has \eqref{3eq:u0alphagam}.
        	\end{proof}

      Now, we are in the position to prove Theorem \ref{3thm:v0}.
      \begin{proof}[\bf{Proof of Theorem \ref{3thm:v0}}]
      The proof of this theorem is similar to the counterpart of Theorem  \ref{thm:2D}.  Recall that $(v,w)$ is a pair of transmission eigenfunctions to \eqref{0eq:tr}.
     Using Green formula \eqref{2eq:2green} and boundary conditions in \eqref{eq:pde sys 3}, the following integral identity holds
       \begin{equation}\label{3eq:green}
       \int_{\mathcal C^h}k^2(q-1)vu_0\mathrm d\mathbf x=\int_{\Lambda _h}(w-v)\partial_\nu u_0-u_0\partial(w-v)\mathrm d\sigma-\int_{\Gamma_h}\eta u_0v\mathrm d\sigma
       \end{equation}
       where $\mathcal C^h$, $\Lambda_h$ and $\Gamma_h$ are defined by  \eqref{eq:cone2}. Let
       $$
       f_{j}=(q-1)v_j.
       $$
     Due to $q\in H^2{(\overline{\mathcal C^h})}$,  we know that $q\in C^{1/2}(\overline{ \mathcal C^h})$ by using the property of embedding of Sobolev space.
      Recall that $\eta \in C^{\alpha_1}(\overline{ \Gamma_h})$. Let $\alpha=\{ \alpha_1,1/2 \}$.  Furthermore, since the Herglotz wave function $v_j\in C^{\alpha}(\overline{ \mathcal C^h})$, it yields that $f_j\in C^\alpha(\overline{ \mathcal C^h})$. Hence one has the expansion
       \begin{equation}\label{3eq:fj}
       \begin{aligned}
       f_{j}&=f_j(\mathbf 0)+\delta f_j,\ \vert \delta f_{j}\vert\leq \| f_{j}\|_{C^{\alpha}(\overline{\mathcal C^h })}\vert \mathbf{x} \vert^{\alpha},\\
       v_{j}&=v_j(\mathbf 0)+\delta v_j,\ \vert \delta v_j\vert \leq\| v_j\|_{C^\alpha(\overline{\mathcal C^h })}\vert\mathbf x\vert ^\alpha,\\
       \eta&=\eta(\mathbf 0)+\delta \eta,\ \vert\delta\eta\vert\leq\|\eta\|_{C^\alpha(\overline{\Gamma_h})}\vert\mathbf x\vert^\alpha.
       \end{aligned}
       \end{equation}
By virtue  of (\ref{3eq:fj}), we have the following integral identityies
       \begin{equation}\label{3eq:vu0}
       k^2\int_{\mathcal C_h}(q-1)vu_0\mathrm d\mathbf x=-\sum_{m=1}^3I_m,\quad
       \int_{\Gamma_h}\eta u_0v\mathrm d\sigma=I-\sum_{m=4}^9I_m,
       \end{equation}
       where
       \begin{equation}\label{3eq:chaifen}
       \begin{aligned}
       I_1&=-k^2\int_{\mathcal C^h}(q-1)(v-v_j)u_0\mathrm d\mathbf x,\quad I_2=-\int_{\mathcal C^h}\delta f_j u_0\mathrm d\mathbf x,\\
       I_3&=-f_j(\mathbf 0)\int_{\mathcal C^h}u_0\mathrm d\mathbf x,\quad
       I_4=-\eta(\mathbf 0)\int_{\Gamma _h}(v-v_j)u_0\mathrm d \sigma\\
       I_5&=-\int_{\Gamma_h}\delta \eta(v-v_j)u_0\mathrm d\sigma,\quad I_6=-\eta(\mathbf 0)v_j(\mathbf 0)\int_{\Gamma_h}e^{\rho \cdot \mathbf x}\psi(\mathbf x)\mathrm d\sigma, \\
       I_7&=\eta(\mathbf 0)\int_{\Gamma_h}\delta v_ju_0\mathrm d\sigma,\quad I_8=-v_j(\mathbf 0)\int_{\Gamma_h}\delta \eta u_0\mathrm d\sigma,\\
       I_9&= \int_{\Gamma_h}\delta \eta \delta v_ju_0\mathrm d\sigma  ,\quad I=\eta(\mathbf 0)v_j(\mathbf 0)\int_{\Gamma_h}e^{\rho\cdot \mathbf x}\mathrm d\sigma.
             \end{aligned}
       \end{equation}
       Substituting (\ref{3eq:vu0}) into (\ref{3eq:green}), it yields that
       $$
       I=\sum_{m=1}^9I_m+J_1+J_2,
       $$
       where
       \begin{equation}\label{3eq:I4I5}
       J_1=\int_{\Lambda_{h}}u_{0}\partial_{\nu}(w-v)\mathrm d\sigma,\quad
       J_2=-\int_{\Lambda_{h}}(w-v)\partial_{\nu}u_{0}\mathrm d\sigma.
       \end{equation}
Hence,  it readily yields that
       \begin{equation}\label{3eq:I}
       \vert I\vert\leq \sum_{m=1}^9\vert I_m\vert+\vert J_1\vert+\vert J_2\vert.
       \end{equation}

       In the sequel, we derive the asymptotic estimates of $I_j$ $(j=1,\ldots,9)$ and $J_j,\ j=1,2$ with respect to the parameter $\tau$ in the CGO  solution $u_0$ when $\tau \rightarrow \infty$, separately.
       Using H\"older inequality, Proposition \ref{2prop:enorm} and (\ref{3eq:psinorm}), it is clear that
       \begin{equation}\label{3eq:I11}
       \begin{aligned}
       \vert I_1\vert
       &\leq \|v-v_j\|_{L^2(\mathcal C^h)}\|e^{\rho\cdot \mathbf x}\|_{L^2(\mathcal C^h)}+\|v-v_j\|_{L^2(\mathcal C^h)}\|e^{\rho\cdot \mathbf x}\psi(\mathbf x)\|_{L^2(\mathcal C^h)}\\
       &\lesssim j^{-\beta}\left[\left(\frac{1}{\tau^3}+\frac{1}{\tau}e^{-\zeta h\tau}\right)^{\frac{1}{2}}+\left(\frac{1}{\tau^3}+\frac{1}{\tau}e^{-2\zeta h\tau}\right)^\frac{1}{4}\tau^{-\frac{2}{5}}   \right], 
       \end{aligned}
       \end{equation}
as  $\tau \rightarrow \infty$.

    With the help of (\ref{3eq:fj}), we have
       \begin{equation}\label{3eq:I21}
       \vert I_{2} \vert \leq k^{2}\|f_{j} \|_{C^{\alpha}}\int_{\mathcal C^h}\vert \mathbf x\vert^{\alpha}\vert u_{0} \vert\mathrm d\mathbf x,
       \end{equation}
       and
       \begin{equation}\label{3eq:fj1}
       \begin{aligned}
       \|f_{j}\|_{C^{\alpha}(\mathcal C^h )}&
       \leq \|q\|_{C^{\alpha}(\mathcal C^h )}\sup_{\mathcal C^h }\vert v_{j}\vert+\|v_{j}\|_{C^{\alpha}(\mathcal C^h )}\sup_{\mathcal C^h }\vert q-1\vert.
       \end{aligned}
       \end{equation}
       Moreover, due to the property of compact embedding of H\"older spaces, one has
       \begin{equation}\label{3eq:calpha}
       \| v_{j}\|_{C^{\alpha}(\mathcal C^h )}\leq \mathrm{ diam}\left(\mathcal C^h\right)^{1-\alpha}\|v_{j}\|_{C^{1}(\mathcal C^h )},
       \end{equation}
       where diam($\mathcal C^h$) is the diameter of $\mathcal C^h$. It can be directly shown that
       \begin{equation}\label{3eq:vjc1}
       \|v_{j}\|_{C^{1}(\mathcal C^h )}\leq 4\sqrt{\pi}(1+k)\|g\|_{L^{2}(\mathbb S^{2})}.
       \end{equation}
 On the other hand, we can obtain the following estimate by using the Cauchy-Schwarz inequality,
       \begin{equation}\label{3eq:vj}
       \vert v_{j}\vert\leq 4\sqrt{\pi}\|g\|_{L^{2}(\mathbb S^{2})}.
       \end{equation}
  Using (\ref{3eq:kernel}) and $q\in C^{\alpha}(\overline {\mathcal C^h})$,
       plugging  (\ref{3eq:kernel}), (\ref{3eq:calpha}), (\ref{3eq:vjc1}) and (\ref{3eq:vj}) into (\ref{3eq:fj1}), one can arrive at
       \begin{equation}\label{3eq:fjc}
       \|f_{j}\|_{C^{\alpha}(\mathcal C^h)}\lesssim j^{\gamma},
       \end{equation}
       where $\gamma$ is a given positive constant defined in (\ref{3eq:kernel}). Substituting (\ref{3eq:u0alpha}) and (\ref{3eq:fjc}) into (\ref{3eq:I21}), we obtain
       \begin{equation}\label{3eq:I2}
       \vert I_{2} \vert \lesssim j^{\gamma}\left[\tau^{-(\alpha+\frac{121}{40})}+\left(\frac{1}{\tau^{\alpha+3}}+\frac{1}{\tau}e^{-\frac{1}{2}\zeta h\tau}\right)\right]
       \end{equation}
       as $\tau \to \infty$.

  With the help of Cauchy-Schwarz inequality and (\ref{3eq:kernel}), it yields that
       \begin{equation}\label{3eq:I31}
       \vert I_{3} \vert
       \leq \int_{\mathcal C^h} \vert e^{\rho\cdot \mathbf x}\vert \mathrm d\mathbf x+\int_{\mathcal C}\vert e^{\rho\cdot \mathbf x}\vert\vert\psi(\mathbf x)\vert\mathrm d\mathbf x,
       \end{equation}
      Similar to \eqref{3eq:edy}, we have that $\|e^{-\mathbf d\cdot \mathbf y}\|_{L^\frac{8}{7}(\mathcal C)}$ is a positive constant depending only on $\zeta$ and $\theta_0$. Letting $\mathbf y=\tau \mathbf x$ and using \eqref{3eq:psiL8C}, it can be calculated that
       \begin{equation}\label{3eq:psiC}
       \begin{aligned}
       \int_{\mathcal C}\vert e^{\rho\cdot \mathbf x}\vert\vert \psi(\mathbf x)\vert\mathrm d\mathbf x
       \leq\frac{1}{\tau^3}\|e^{-\mathbf d\cdot \mathbf y}\|_{L^{\frac{8}{7}}(\mathcal C)}\left \|\psi\left (\frac{\mathbf y}{\tau}\right )\right \|_{L^8(\mathcal C)}
       \lesssim \tau^{-\frac{121}{40}}, 
       \end{aligned}
       \end{equation}
       as  $\tau \rightarrow \infty$.  
       Therefore, with the help of Proposition \ref{2prop:enorm}, and plugging \eqref{3eq:psiC} into (\ref{3eq:I31}), one has
       \begin{equation}\label{3eq:I3}
       \vert I_{3} \vert \lesssim \tau^{-\frac{121}{40}}+\left(\frac{1}{\tau^3}+\frac{1}{\tau}e^{-\frac{1}{2}\zeta h\tau}\right),\quad \mbox{as  $\tau \rightarrow \infty$. }
       \end{equation}

        By virtue of  Cauchy-Schwarz inequality and Lemma \ref{lem:trace thm}, we can obtain that
        \begin{equation}\label{3eq:I41}
        \begin{aligned}
        \vert I_4 \vert&\lesssim \|v-v_j\|_{L^{2}(\Gamma_h)}\|e^{\rho \cdot \mathbf x}\|_{L^2(\Gamma_h)}+\|v-v_j\|_{L^2(\Gamma_h)}\|e^{\rho \cdot \mathbf x}\|_{L^4(\Gamma_h)}\|\psi(\mathbf x )\|_{L^4(\Gamma_h)}\\
        &\lesssim j^{-\beta}\left[\left(\frac{1}{\tau^2}+\frac{1}{\tau}e^{-\zeta h\tau}\right)^\frac{1}{2}+\left(\frac{1}{\tau^2}+\frac{1}{\tau}e^{-2\zeta h\tau}\right)^\frac{1}{4}\tau^{-\frac{2}{5}}\right]
        \end{aligned}
        \end{equation}
        as $\tau\to \infty$.

       With the help of Cauchy-Schwarz inequality,  Lemma \ref{lem:trace thm} and  H\"older inequality, one has
       \begin{equation}\label{3eq:I5}
       \begin{aligned}
       \vert I_5\vert
       &\lesssim\|v-v_j\|_{L^2(\Gamma_h)}(\| \vert e^{\rho\cdot \mathbf x}\vert \vert \mathbf x\vert^\alpha\|_{L^2(\Gamma_h)}+\|\vert e^{\rho\cdot \mathbf x}\vert \vert \mathbf x\vert^\alpha\|_{L^4(\Gamma_h)}\|\psi(\mathbf x)\|_{L^{4}(\Gamma_h)})\\
       &\lesssim j^{-\beta}\left[\left(\frac{1}{\tau^{(2\alpha +2)}}+\frac{1}{\tau}e^{-\zeta h\tau}\right)^{\frac{1}{2}}+\left(\frac{1}{\tau^{(4\alpha +2)}}+\frac{1}{\tau}e^{-2\zeta h\tau}\right)^{\frac{1}{4}}\tau^{-\frac{2}{5}}\right],
       \end{aligned}
       \end{equation}
as  $\tau \rightarrow \infty$.

       Similar to \eqref{2eq:psik}, it can be directly obtained that
       \begin{equation}\label{3eq:psiH8}
       \left\| \psi\left(\frac{\mathbf y}{\tau}\right)\right\|_{H^{1,8}(\mathcal C)}\leq\tau^{\frac{3}{8}}\|\psi(\mathbf x)\|_{H^{1,8}(\mathcal C)}=\mathcal O(\tau^{-\frac{1}{40}}),
       \end{equation}
       as  $\tau \rightarrow \infty$.  Therefore, following the proof of Lemma \ref{2lem:psi8} and using H\"older inequality and Lemma \ref{lem:trace thm}, we have
       	\begin{equation}\label{3eq:I61}
       	\begin{aligned}
       	\vert I_6\vert &
       	\lesssim \frac{1}{\tau^2}\|e^{-\mathbf d\cdot \mathbf y}\|_{L^{\frac{8}{7}}(\Gamma)}\left\|\psi\left(\frac{\mathbf y}{\tau}\right)\right\|_{L^8(\Gamma)}\\
       	&\lesssim \frac{1}{\tau^2} \|e^{-\mathbf d\cdot \mathbf y}\|_{L^{\frac{8}{7}}(\Gamma)} \tau^{\frac{3}{8}}\|\psi(\mathbf x)\|_{H^{1,8}(\mathcal C)}
       	\lesssim\tau^{-\frac{81}{40}}, \quad \mbox{as  $\tau \rightarrow \infty$. }
       	\end{aligned}
       	\end{equation}

 Moreover, we have the following estimates for $I_7,\ I_8$ and $I_9$ by virtue of \eqref{3eq:u0alphagam} directly,
   	\begin{align}
   	\vert I_7\vert
   	&\lesssim \|v_j\|_{C^\alpha(\Gamma_h)}\int_{\Gamma_h}\vert \mathbf x\vert^\alpha\vert u_0\vert\mathrm d\sigma
   	\lesssim j^\gamma\left[ \tau^{-(\alpha +\frac{43}{20})}+\left(\frac{1}{\tau^{\alpha +2}} +\frac{1}{\tau}e^{-\frac{1}{2}\zeta h\tau}\right) \right], \notag \\
   	\vert I_8\vert& \lesssim
   	\tau^{-(\alpha +\frac{43}{20})}+\left(\frac{1}{\tau^{\alpha +2}} +\frac{1}{\tau}e^{-\frac{1}{2}\zeta h\tau}\right),\notag \\
   %
   	\vert I_9\vert&\lesssim j^\gamma\left[ \tau^{-(2\alpha +\frac{43}{20})}+\left(\frac{1}{\tau^{2\alpha +2}} +\frac{1}{\tau}e^{-\frac{1}{2}\zeta h\tau}\right)\right],\quad \mbox{as  $\tau \rightarrow \infty$. }\label{3eq:I9}
   \end{align}

        Using Cauchy-Schwarz inequality and Lemma \ref{lem:trace thm}, we obtain that
       \begin{equation}\label{3eq:J11}
       \begin{aligned}
       \vert J_1 \vert&\leq \|u_{0}\|_{H^{\frac{1}{2}}(\Lambda_{h})} \|\partial_{\nu}(w-v)\|_{H^{-\frac{1}{2}}(\Lambda_{h})}
       \leq C\|u_{0}\|_{H^{1}(\Lambda_{h})} \|\partial_{\nu}(w-v)\|_{H^{1}(\Lambda_{h})}\\
       &\lesssim \|u_{0}\|_{H^{1}(\Lambda_{h})}
       \end{aligned}
       \end{equation}
       as $\tau\to \infty$, where $C$ is a positive constant arising from the trace theorem. By virtue of (\ref{3eq:u0lambda}) and (\ref{3eq:nablau0}), it can be calculated that
       \begin{equation}\label{3eq:J1}
       \begin{aligned}
       \vert J_1 \vert \lesssim  (1+\tau)(1+\tau^{-\frac{2}{5}})e^{-\zeta h\tau}
       \end{aligned}
       \end{equation}
       as $\tau \to \infty$, where $\zeta$ is a positive constant given in (\ref{3eq:3zeta}).
       Finally, using Cauchy-Schwarz inequality, the trace theorem and (\ref{3eq:nablau0}), we can obtain that
       \begin{equation}\label{3eq:J2}
       \begin{aligned}
       \vert J_2 \vert &\leq \|\partial _{\nu} u_0\|_{ L^{2}(\Lambda_{h})} \|w-v\|_{ L^{2}(\Lambda_{h})}
       \leq C\|\partial _{\nu} u_0\|_{ L^{2}(\Lambda_{h})} \|w-v\|_{ H^{1}(\mathcal C^h)}\\
       &\lesssim (1+\tau)(1+\tau^{-\frac{2}{5}})e^{-\zeta h\tau}, 
       \end{aligned}
       \end{equation}
as  $\tau \rightarrow \infty$. 

       Substituting (\ref{3eq:I11}), (\ref{3eq:I2}), (\ref{3eq:I3})$-$\eqref{3eq:I9}, \eqref{3eq:J1} and \eqref{3eq:J2} into (\ref{3eq:I}), we have
       \begin{align}
       \vert \eta(\mathbf 0)v_j(\mathbf 0)\vert&\left(\frac{C_{\mathcal C^h}}{\tau^2}-\frac{1}{\tau}e^{-\frac{1}{2}\zeta h \tau}\right)
       	\lesssim  j^{-\beta}\left[\left(\frac{1}{\tau^3}+\frac{1}{\tau}e^{-\zeta h\tau}\right)^{\frac{1}{2}}+\left(\frac{1}{\tau^3}+\frac{1}{\tau}e^{-4\zeta h\tau}\right)^\frac{1}{4}\tau^{-\frac{2}{5}} \right]\notag \\
       	&
       	+j^{\gamma}\left[\tau^{-(\alpha+\frac{121}{40})}+\left(\frac{1}{\tau^{\alpha+3}}+\frac{1}{\tau}e^{-\frac{1}{2}\zeta h\tau}\right)\right]\notag \\
       	&
       	+j^{-\beta}\left[\left(\frac{1}{\tau^2}+\frac{1}{\tau}e^{-\zeta h\tau}\right)^{\frac{1}{2}}+\left(\frac{1}{\tau^2}+\frac{1}{\tau}e^{-2\zeta h\tau}\right)^\frac{1}{4}\tau^{-\frac{2}{5}}\right]\notag \\
       	&
       	+j^{-\beta}\left[\left(\frac{1}{\tau^{(2\alpha +2)}}+\frac{1}{\tau}e^{-\zeta h\tau}\right)^{\frac{1}{2}}+\left(\frac{1}{\tau^{(4\alpha +2)}}+\frac{1}{\tau}e^{-2\zeta h\tau}\right)^{\frac{1}{4}}\tau^{-\frac{2}{5}}\right]\notag \\
       	&
       	+(j^\gamma+1)\left[ \tau^{-(\alpha+\frac{43}{20})}+\left(\frac{1}{\tau^{\alpha+2}}+\frac{1}{\tau}e^{-\frac{1}{2}\zeta h\tau}\right)
        \right]\notag \\
       	&
       	+j^\gamma\left[\tau^{-(2\alpha+\frac{43}{20})}+\left(\frac{1}{\tau^{2\alpha+2}}+\frac{1}{\tau}e^{-\frac{1}{2}\zeta h\tau}\right)\right]\notag \\
       	&
       	+\tau^{-\frac{81}{40}}+\tau^{-\frac{121}{40}}+\left(\frac{1}{\tau^3}+\frac{1}{\tau}e^{-\frac{1}{2}\zeta h\tau}\right)
       	+(1+\tau)(1+\tau^{-\frac{2}{5}})e^{-\zeta h\tau} \label{3D1}
       	\end{align}
       as $\tau \to \infty$, where $C_{\mathcal C^h}$ is a positive constant given in (\ref{3eq:eta}). Moreover, for sufficiently large $\tau$, we know that
       $$\frac{C_{\mathcal C^h}}{\tau^2}-\frac{1}{\tau}e^{-\frac{1}{2}\zeta h \tau}>0.$$
       Hence, multiplying $\tau^2$ on both sides of (\ref{3D1}) and taking $\tau=j^s$ and $s>0$, we derive that
       \begin{equation}\label{3D2}
       \begin{aligned}
       \left(C_{\mathcal C^h}-j^s e^{-\frac{1}{2}\zeta hj^s}\right)\vert \eta(\mathbf 0)v_j(\mathbf 0)\vert &\lesssim j^{-\beta +\frac{17}{20}s}+j^{\gamma-(\alpha+1)s}+j^{-\beta+\frac{11}{10}s}\\
       &+j^{-\beta+(-\alpha+\frac{11}{10})s}
       +j^{\gamma-\alpha s}+j^{\gamma-2\alpha s}
       \end{aligned}
       \end{equation}
       as $\tau\to \infty$. Recalling that $\gamma/\alpha<\frac{10}{11}\beta$, we can choose $s\in (\gamma/\alpha ,\frac{10}{11}\beta )$. Hence in (\ref{3D2}), by letting $j\to \infty$, we  prove that
       $$\lim_{j\to \infty}\vert\eta(\mathbf 0)v_j(\mathbf 0)\vert=0.$$
       Since $\eta(\mathbf 0)\not=0$, we have $\lim_{j\to \infty }\vert v_j(\mathbf 0) \vert=0$. Using (\ref{2eq:herg}) and integral mean value theorem, we can obtain (\ref{3eq:delv0}).

       The proof is complete.
      \end{proof}

\subsection{Proof of Theorem \ref{3:cubiod}}\label{subsec:42}

	In order to prove Theorem \ref{3:cubiod}, we first give a crucial estimate in  the following proposition. It is pointed out that  $\mathcal K$ is a cuboid cone in this subsection, where $\mathbf 0$ is the apex of $\mathcal  K$. Denote  ${\sf  cone}(\mathbf a, \mathbf b)=\{\mathbf x\in \mathbb R^3~|~\mathbf x=c_1\mathbf a+c_2\mathbf b,\, \forall c_i\geq 0,\  i=1,2\} $, where $\mathbf a$ and $\mathbf b$ are  fixed vectors.  Let $\mathbf e_1=(1,0,0)^\top$, $\mathbf e_2=(0,1,0)^\top$ and $\mathbf e_3=(0,0,1)^\top$.  Suppose that the faces $\partial \mathcal K=\cup_{i=1}^3\partial \mathcal K_i $, where  $ \mathcal K_1={\sf  cone}(\mathbf e_1, \mathbf e_3) $, $ \mathcal K_1={\sf  cone}(\mathbf e_1, \mathbf e_2) $  and $ \mathcal K_1={\sf  cone}(\mathbf e_2, \mathbf e_3) $.
	
	\begin{prop}\label{pro:43}
		Let $\mathbf d=(1,1,1)^\top $ and $\mathbf d^\perp=(1,-1,0)^\top $. Denote $z_j=\rho_1 \cdot \hat{\mathbf x}_j(\theta_\xi )$, where
		\begin{align}\label{eq:x_j}
			\hat{\mathbf x}_1(\theta_\xi )=\begin{bmatrix}
			0\\ \sin \theta_\xi \\ \cos \theta_\xi
		\end{bmatrix},\quad \hat{\mathbf x}_2(\theta_\xi )=\begin{bmatrix}
			\sin \theta_\xi\\ 0 \\ \cos \theta_\xi
		\end{bmatrix},\quad \hat{\mathbf x}_3(\theta_\xi )=\begin{bmatrix}
			  \cos \theta_\xi \\ \sin \theta_\xi \\ 0
		\end{bmatrix}
		\end{align}
		with a fixed $\theta_\xi \in (0,\pi/2)$, and $\rho_1=\mathbf d +\mathrm i \mathbf d^\perp$. It holds that
		\begin{align}\label{eq:bound z}
				\left|	\sum_{j=1}^3 \frac{1}{z_j^2} \right| \geq \frac{\sin^3\theta_\xi }{30} >0.
		\end{align}
	\end{prop}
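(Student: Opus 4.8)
The plan is to collapse the sum into a single rational expression in $z_1,z_2,z_3$ and then bound its modulus from below by controlling the real part of the numerator against the modulus of the denominator. First I would evaluate the three quantities explicitly. Writing $s=\sin\theta_\xi$, $c=\cos\theta_\xi$ and using $\rho_1=\mathbf d+\mathrm{i}\mathbf d^\perp=(1+\mathrm{i},\,1-\mathrm{i},\,1)^\top$, substituting the vectors in \eqref{eq:x_j} gives
\begin{equation}\notag
z_1=(s+c)-\mathrm{i}s,\qquad z_2=(s+c)+\mathrm{i}s,\qquad z_3=(s+c)+\mathrm{i}(c-s).
\end{equation}
The two structural facts driving the whole argument are that $z_2=\overline{z_1}$, and that $|z_3|^2=(s+c)^2+(c-s)^2=2$ is independent of $\theta_\xi$; moreover $|z_1|^2=|z_2|^2=(s+c)^2+s^2=1+2sc+s^2=:R\ge 1>0$, so all three factors are nonzero on $(0,\pi/2)$ and the expression is well defined.

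Next I would put the sum over a common denominator,
\begin{equation}\notag
\sum_{j=1}^3\frac{1}{z_j^2}=\frac{N}{D},\qquad N=z_1^2z_2^2+z_3^2\big(z_1^2+z_2^2\big),\qquad D=z_1^2z_2^2z_3^2.
\end{equation}
Here the conjugacy $z_2=\overline{z_1}$ is exploited twice: it makes $z_1^2z_2^2=(z_1\overline{z_1})^2=R^2$ a positive real number, and it makes $z_1^2+z_2^2=2\,\Re(z_1^2)=2c(c+2s)$ real as well. The denominator is controlled in modulus immediately by $|D|=|z_1|^2|z_2|^2|z_3|^2=2R^2$. A short computation of $z_3^2=4sc+2\mathrm{i}(c^2-s^2)$ then gives
\begin{equation}\notag
\Re(N)=R^2+8sc^2(c+2s),\qquad \Im(N)=4c(c+2s)(c^2-s^2).
\end{equation}

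The key step is the observation that, since $s,c>0$ on $(0,\pi/2)$, the extra term $8sc^2(c+2s)$ is nonnegative, so $\Re(N)\ge R^2>0$. Consequently
\begin{equation}\notag
\left|\sum_{j=1}^3\frac{1}{z_j^2}\right|=\frac{|N|}{|D|}\ge\frac{\Re(N)}{|D|}\ge\frac{R^2}{2R^2}=\frac12,
\end{equation}
which in particular dominates $\sin^3\theta_\xi/30\le 1/30$ and proves the claim.

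I do not anticipate a genuine obstacle: the statement is an elementary inequality once the algebra is organized, and the only thing to watch is the bookkeeping of the complex arithmetic. The real content is recognizing that $z_2=\overline{z_1}$ collapses the relevant products to real quantities and that $|z_3|$ is constant, which together make $\Re(N)$ manifestly positive. I note that this route yields the clean uniform bound $1/2$, considerably stronger than the stated $\sin^3\theta_\xi/30$; if one instead wishes to reproduce exactly the form $\sin^3\theta_\xi/30$, it suffices to bound each $|z_j|$ above by a fixed constant and lower bound a single summand, at the cost of a weaker constant.
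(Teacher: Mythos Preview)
Your argument is correct and in fact sharper than the paper's: by organizing the sum as $N/D$ with $D=z_1^2z_2^2z_3^2$ and exploiting $z_2=\overline{z_1}$, $|z_3|^2=2$, you obtain the uniform lower bound $\tfrac12$ (attained in the limit $\theta_\xi\to\pi/2$), which of course dominates $\sin^3\theta_\xi/30$.

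The paper proceeds differently. It factors out $1/z_1^2$ and writes $\sum_j z_j^{-2}=S(\theta_\xi)/z_1^2$ with $S(\theta_\xi)=1+(z_1/\overline{z_1})^2+c_1 z_4$, then estimates the pieces separately, arriving at the weaker $\theta_\xi$-dependent constant. Both routes hinge on the same structural observation $z_2=\overline{z_1}$, but your common-denominator approach makes the positivity of $\Re(N)=R^2+8sc^2(c+2s)$ transparent and avoids the auxiliary quantities $c_1,z_4$; the payoff is a cleaner computation and a stronger, uniform constant. The paper's decomposition, on the other hand, isolates the contribution of the ``diagonal'' term $1/z_3^2$ more explicitly, which may be closer in spirit to how the estimate is later used, but at the cost of extra algebra and a suboptimal bound.
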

	\begin{proof}
		By direct calculations, we have
		\begin{equation}\label{eq:456 theta}
			\sum_{j=1}^3 \frac{1}{z_j^2}=\frac{S(\theta_\xi )}{z_1^2},\quad S(\theta_\xi )=1+\left(\frac{z_1}{\bar z_1}\right)^2 +c_1 z_4,
		\end{equation}
		where
		\begin{align*}
			c_1=&\frac{|z_1|^4}{||z_1|^2-\sin \theta_\xi  \cos \theta_\xi +\mathrm i \cos \theta_\xi (\cos\theta_\xi+\sin \theta_\xi  )|^4},\\
			z_4=&\left(|z_1|^2-1/2\sin2\theta_\xi -\mathbf i \cos \theta_\xi (\cos\theta_\xi +\sin\theta_\xi  ) \right)^2.
		\end{align*}
	By noting $\theta_\xi \in (0,\pi/2)$, it yields that $c_1\geq 0.05\sin\theta_\xi$ and $\Re(z_4) \geq 2 \sin^2 \theta_\xi $. Hence according to \eqref{eq:456 theta}, we obtain \eqref{eq:bound z}.	
			\end{proof}
	
	\begin{prop}\label{pro:44}
		Assume that $\mathcal K^h$ is a truncated cuboid. Let $\Gamma _h=\partial \mathcal K^h\cap B_h$ and $\rho$ be defined in \eqref{1eq:eta}, where $\mathbf d=(1,1,1)^\top $ and $\mathbf d^\perp=(1,-1,0)^\top $. Then one has
		\begin{equation}\label{eq:estcuboid}
		\left \vert \int_{\Gamma_h}e^{\rho\cdot \mathbf x}\mathrm d \sigma\right \vert \geq \frac{C^{'}_{\mathcal K^h}}{\tau^2}-\mathcal O(\frac{1}{\tau}e^{-\frac{1}{2}\zeta h \tau}),
		\end{equation}
		for sufficiently large $\tau$, where $C^{'}_{\mathcal K ^h}$ is a positive number not depending on $\tau$.
	\end{prop}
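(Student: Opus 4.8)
The plan is to follow the route of Proposition~\ref{3prop:eta} for the conic case, the essential new feature being that $\Gamma_h$ now decomposes into the three planar quarter-faces $\partial\mathcal K_1,\partial\mathcal K_2,\partial\mathcal K_3$ lying in the three coordinate planes, so that the three leading $\tau^{-2}$ contributions might in principle cancel. First I would write
\begin{equation}\notag
\int_{\Gamma_h}e^{\rho\cdot\mathbf x}\,\mathrm d\sigma=\sum_{j=1}^{3}\int_{\partial\mathcal K_j\cap B_h}e^{\rho\cdot\mathbf x}\,\mathrm d\sigma ,
\end{equation}
and on each face introduce polar coordinates $(r,\theta)$ within the plane of that face, with $\theta\in(0,\pi/2)$ and $r\in(0,h)$, so that $\mathbf x=r\hat{\mathbf x}_j(\theta)$, $\rho\cdot\mathbf x=-\tau z_j(\theta)r$ with $z_j(\theta)=(\mathbf d+\mathrm i\mathbf d^\perp)\cdot\hat{\mathbf x}_j(\theta)$, and $\mathrm d\sigma=r\,\mathrm dr\,\mathrm d\theta$; here $\hat{\mathbf x}_j(\theta)$ is exactly the parametrisation used in Proposition~\ref{pro:43}.

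Next I would carry out the radial integration by Proposition~\ref{1prop:gamma}. Applying \eqref{1eq:gamma} with $\alpha=1$ and $\mu=\tau z_j(\theta)$, whose real part $\tau\,\mathbf d\cdot\hat{\mathbf x}_j(\theta)\geq\zeta\tau\to\infty$ uniformly in $\theta$ because the faces lie strictly inside the half-space $\{\mathbf d\cdot\mathbf x>0\}$ (the analogue of \eqref{3eq:3zeta}), each radial integral equals $\Gamma(2)/(\tau z_j(\theta))^2$ plus a remainder of size $\mathcal O(\tfrac1\tau e^{-\frac12\zeta h\tau})$, the three radial tails being controlled by \eqref{1eq:Ir}. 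Summing over $j$ and integrating in $\theta$ then gives
\begin{equation}\notag
\int_{\Gamma_h}e^{\rho\cdot\mathbf x}\,\mathrm d\sigma=\frac{\Gamma(2)}{\tau^2}\int_{0}^{\pi/2}\sum_{j=1}^{3}\frac{1}{z_j(\theta)^2}\,\mathrm d\theta+\mathcal O\!\left(\frac1\tau e^{-\frac12\zeta h\tau}\right).
\end{equation}
Crucially, the angular integrand $\sum_{j=1}^{3}z_j(\theta)^{-2}$ is a purely geometric quantity independent of $\tau$. As in the passage \eqref{eq:estconic}, I would invoke the mean value theorem for integrals (applied to the real and imaginary parts) to replace the angular integral by $\tfrac{\pi}{2}\sum_{j=1}^{3}z_j(\theta_\xi)^{-2}$ at a fixed $\theta_\xi\in(0,\pi/2)$, and then appeal to Proposition~\ref{pro:43}.

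The decisive step is Proposition~\ref{pro:43}, which guarantees $\bigl|\sum_{j=1}^{3}z_j(\theta_\xi)^{-2}\bigr|\geq\sin^3\theta_\xi/30>0$; taking $C'_{\mathcal K^h}:=\tfrac{\pi}{2}\Gamma(2)\sin^3\theta_\xi/30$, a constant independent of $\tau$ because $\theta_\xi$ is geometric, immediately yields \eqref{eq:estcuboid}. \textbf{The main obstacle} is precisely this non-cancellation of the three leading contributions: whereas in Proposition~\ref{3prop:eta} a single conic face produces one term $z^{-2}$ whose modulus is trivially bounded below, here three complex terms could destructively interfere, and ruling this out is exactly the role of Proposition~\ref{pro:43} and of the specific choice $\mathbf d=(1,1,1)^\top$, $\mathbf d^\perp=(1,-1,0)^\top$. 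A subordinate technical point, to be treated as in the conic computation, is the careful handling of the complex-valued mean value step so that the pointwise modulus bound of Proposition~\ref{pro:43} can be legitimately transferred to the angular integral.
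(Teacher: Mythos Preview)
Your proposal is correct and follows essentially the same route as the paper: decompose $\Gamma_h$ into the three coordinate-plane faces, perform the radial integration via Proposition~\ref{1prop:gamma} to extract the $\tau^{-2}$ leading term with an exponentially small remainder, reduce the angular integral to a point evaluation via the mean value theorem, and then invoke Proposition~\ref{pro:43} to rule out cancellation among the three contributions. The paper's proof is terser but structurally identical, and your identification of the non-cancellation issue as the crux---resolved precisely by Proposition~\ref{pro:43} and the specific choice of $\mathbf d,\mathbf d^\perp$---matches the paper's own logic.
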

	
	\begin{proof}
		Since  $\mathcal K$ is a cuboid, by the geometrical setup and notations in this subsection, we have $\Gamma_h=\Gamma_{h1}\cup \Gamma_{h2}\cup\Gamma_{h3}$, where $\Gamma _{h1}:=\partial \mathcal K_1\cap B_h,\ \Gamma _{h2}:=\partial \mathcal K_2\cap B _h,\ \Gamma _{h3}:=\partial \mathcal K_3\cap B_h$.
		
		According to Proposition \ref{1prop:gamma}, it can be derived that
		\begin{equation}
		\begin{aligned}\notag
		\int_{\Gamma_h}e^{\rho \cdot \mathbf x} \mathrm d \sigma
		&=\int_{\Gamma _{h1}}e^{\rho \cdot \mathbf x} \mathrm d \sigma+\int_{\Gamma _{h2}}e^{\rho \cdot \mathbf x} \mathrm d \sigma+\int_{\Gamma _{h3}}e^{\rho \cdot \mathbf x} \mathrm d \sigma\\
		&= \frac{1}{2\pi \tau^2} \sum_{j=1}^3  \frac{1}{(\rho_1\cdot \hat{\mathbf x}_j(\theta_\xi))^2} -\mathcal O(\frac{1}{\tau}e^{-\frac{1}{2}\zeta h \tau}),
		\end{aligned}
		\end{equation}
		where $\theta_\xi \in (0,\pi/2)$ is fixed.
		By virtue of Proposition \ref{pro:43}, we complete the proof.
	\end{proof}
	
	\begin{proof}[The proof of Theorem \ref{3:cubiod}]
		Using the fact that $f=(q-1)v\in C^\alpha(\overline{\mathcal K^h}),\ v\in C^\alpha(\overline{\mathcal K^h}) ,\eta\in C^\alpha(\overline {\Gamma _h}) $, we have the following expansion
		\begin{equation}\label{3eq:exalpha}
		\begin{aligned}
		f&=f(\mathbf 0)+\delta f,\quad \vert \delta f \vert \leq \|f\|_{C^\alpha}\vert \mathbf x \vert ^\alpha,\\
		v&=v(\mathbf 0)+\delta v,\quad \vert \delta v \vert \leq \|v\|_{C^\alpha}\vert \mathbf x \vert ^\alpha,\\
		\eta&=\eta(\mathbf 0)+\delta \eta, \quad \vert \delta \eta\vert \leq \|\eta\|_{C^\alpha}\vert \mathbf x\vert ^\alpha.
		\end{aligned}
		\end{equation}
	Combining the integral identity \eqref{3eq:green}  with \eqref{3eq:exalpha},  it arrives that
		\begin{equation}\label{3eq:cubiod1}
		\eta(\mathbf 0)v(\mathbf 0)\int_{\Gamma_h}e^{\rho \cdot \mathbf x}\mathrm d \sigma=\sum_{i=1}^6 I_i+J_1+J_2,
		\end{equation}
		where
		\begin{equation}\label{3eq:chaicub}
		\begin{aligned}
		I_1&=f(\mathbf 0)\int_{\mathcal K^h}u_0\mathrm d\mathbf x,\quad I_2= \int_{\mathcal K^h}\delta f u_0\mathrm d\mathbf x,\quad
		I_3=\eta(\mathbf 0)v(\mathbf 0)\int_{\Gamma _h}\psi(\mathbf x)e^{\rho \cdot \mathbf x}\mathrm d\sigma,\\
		I_4&=\eta (\mathbf 0)\int_{\Gamma _h}\delta v u_0\mathrm d\sigma,\quad
		I_5=v(\mathbf 0)\int_{\Gamma _h}\delta \eta u_0\mathrm d\sigma,\quad I_6=\int_{\Gamma _h}\delta \eta\delta v u_0\mathrm d\sigma,\\
		J_1&=-\int_{\Lambda_h}(w-v)\partial_{\nu}u_0\mathrm d\sigma,\quad J_2=\int_{\Lambda_h}u_0\partial_{\nu}(w-v)\mathrm d\sigma.
		\end{aligned}
		\end{equation}
	There must exist a convex conic cone $\mathcal C$ contains the cuboid cone $\mathcal K$, namley $\mathcal K \subset \mathcal C$. Hence, by virtue of \eqref{3eq:I3} and \eqref{3eq:u0alphagam}, we have
		\begin{equation}\label{I1'}
		\vert I_1\vert
		\leq\vert f(\mathbf 0)\vert \int_{\mathcal K^h}\vert u_0\vert\mathrm d\mathbf x
		\leq  \vert f(\mathbf 0)\vert \int_{\mathcal C^h}\vert u_0\vert\mathrm d\mathbf x \lesssim \tau^{-\frac{121}{40}}+\left(\frac{1}{\tau^3}+\frac{1}{\tau}e^{-\frac{1}{2}\zeta h\tau}\right),
		\end{equation}
		and
		\begin{equation}\label{I2'}
		\vert I_2\vert
		\leq\int_{\mathcal K^h} \vert \delta f u_0\vert \mathrm d\mathbf x
		\leq \int_{\mathcal C^h} \vert \delta f u_0\vert \mathrm d\mathbf x
		\lesssim \tau^{-(\alpha +\frac{43}{20})}+\left(\frac{1}{\tau^{\alpha +2}} +\frac{1}{\tau}e^{-\frac{1}{2}\zeta h\tau}\right),
		\end{equation}
as  $\tau \rightarrow \infty$.  
		
	In view of \eqref{3eq:I61}, we have
		\begin{equation}\label{I3'}
		\vert I_3\vert \lesssim \tau^{-\frac{81}{40}}.
		\end{equation}
		
		In addition, by using \eqref{3eq:u0alphagam} in Lemma \ref{3lem:u0est}, we have the following inequalities:
		\begin{align}\label{I4'}
		\vert I_4\vert &\lesssim \tau^{-(\alpha +\frac{43}{20})}+\left(\frac{1}{\tau^{\alpha +2}} +\frac{1}{\tau}e^{-\frac{1}{2}\zeta h\tau}\right),\\
		\vert I_5 \vert &\lesssim \tau^{-(\alpha +\frac{43}{20})}+\left(\frac{1}{\tau^{\alpha +2}} +\frac{1}{\tau}e^{-\frac{1}{2}\zeta h\tau}\right), \label{I5'} \\
		\vert I_6\vert &\lesssim \tau^{-(2\alpha +\frac{43}{20})}+\left(\frac{1}{\tau^{2\alpha +2}} +\frac{1}{\tau}e^{-\frac{1}{2}\zeta h\tau}\right),  \label{I6'}
		\end{align}
		as  $\tau \rightarrow \infty$. Moreover, by using \eqref{3eq:J1} and \eqref{3eq:J2}, we have
		\begin{equation}\label{J1'}
		\vert J_1 \vert \lesssim  (1+\tau)(1+\tau^{-\frac{2}{5}})e^{-\zeta h\tau}
		\end{equation}
		and
		\begin{equation}\label{J2'}
		\vert J_2 \vert \lesssim  (1+\tau)(1+\tau^{-\frac{2}{5}})e^{-\zeta h\tau}.
		\end{equation}
		as  $\tau \rightarrow \infty$.  
Let $\rho$ be defined in  \eqref{1eq:eta} with $\mathbf d=(1,1,1)^\top $ and $\mathbf d^\perp=(1,-1,0)^\top $. By Proposition \ref{pro:44}, one has \eqref{eq:estcuboid}.   Plugging  \eqref{I1'}-\eqref{J2'} and \eqref{eq:estcuboid} into \eqref{3eq:cubiod1}, it arrives that
		\begin{equation}\label{I'}
		\begin{aligned}
		\vert \eta(\mathbf 0) v(\mathbf 0)\vert &\left(\frac{C^{'}_{\mathcal K^h}}{\tau^2}-\frac{1}{\tau}e^{-\frac{1}{2}\zeta h \tau}\right)
		\lesssim \tau^{-\frac{121}{40}}+\left(\frac{1}{\tau^3}+\frac{1}{\tau}e^{-\frac{1}{2}\zeta h\tau}\right)
		+\tau^{-(\alpha +\frac{43}{20})}\\
		&+\left(\frac{1}{\tau^{\alpha +2}} +\frac{1}{\tau}e^{-\frac{1}{2}\zeta h\tau}\right)
		+\tau^{-\frac{81}{40}}
		+\tau^{-(\alpha +\frac{43}{20})}+\left(\frac{1}{\tau^{\alpha +2}}
		+\frac{1}{\tau}e^{-\frac{1}{2}\zeta h\tau}\right)\\
		&+\tau^{-(2\alpha +\frac{43}{20})}+\left(\frac{1}{\tau^{2\alpha +2}} +\frac{1}{\tau}e^{-\frac{1}{2}\zeta h\tau}\right)\\
		&+(1+\tau)(1+\tau^{-\frac{2}{5}})e^{-\zeta h\tau},
		\end{aligned}
		\end{equation}
		where the positive constant  $C^{'}_{\mathcal K^h}$ not depending on $\tau$ is defined in \eqref{eq:estcuboid}.
Multiplying $\tau^2$ on both sides of \eqref{I'} and letting $\tau \to \infty $, one has
		$$\vert \eta (\mathbf 0)v(\mathbf 0)\vert=0.$$
Due to $\eta (\mathbf 0)\not =0$, we complete the proof of Theorem \ref{3:cubiod}.
	\end{proof}

\subsection{Proof of Corollary \ref{cor2}}\label{subsec:43}

Due to the proof of Corollary \ref{cor2} (b) can be obtained by adopting the similar process as one of Corollary \ref{cor2} (a), hence we only give the proof of Corollary \ref{cor2} (a). Firstly, we give the following proposition.
\begin{prop}\cite[Lemma\ 2.4]{DFLY}\label{3prop:eta3}
	Let $\mathcal C^{h}$ and $\rho$ be defined in (\ref{eq:cone2}) and (\ref{1eq:eta}), respectively. Then we have
	\begin{equation}\label{3eq:eta3}
	\left\vert \int_{\mathcal C^{h}} e^{\rho \cdot \mathbf x}\mathrm d\mathbf x\right\vert \geq \frac{\widetilde{C_{\mathcal C^h}}}{\tau^3}-\mathcal O\left(\frac{1}{\tau}e^{-\frac{1}{2}\zeta h \tau}\right),
	\end{equation}
	for sufficiently large $\tau$, where $\widetilde{C_{\mathcal C^h}}$ is a positive number only depending on the opening angle $\theta_0$ of $\mathcal C$ and $\zeta$.
\end{prop}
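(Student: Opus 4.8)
The plan is to follow the same spherical-coordinate strategy already used for the conic surface estimate in Proposition \ref{3prop:eta} and for the planar sector in Proposition \ref{prop:tau2}, the only difference being that the integration is now over the \emph{solid} truncated cone $\mathcal C^h$, so the radial integral produces a $\Gamma(3)/\mu^3$ factor and hence the expected $\tau^{-3}$ scaling. Writing $\mathbf x=r\hat{\mathbf x}(\theta,\varphi)$ with $\hat{\mathbf x}(\theta,\varphi)=(\sin\theta\cos\varphi,\sin\theta\sin\varphi,\cos\theta)$, the region $\mathcal C^h$ corresponds to $\varphi\in[0,2\pi)$, $\theta\in[0,\theta_0]$, $r\in[0,h]$, with volume element $r^2\sin\theta\,\mathrm dr\,\mathrm d\theta\,\mathrm d\varphi$. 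Setting $\mu=\mu(\theta,\varphi)=\tau(\mathbf d+\mathrm i\mathbf d^\perp)\cdot\hat{\mathbf x}$, assumption \eqref{3eq:3zeta} yields $\Re\mu=\tau\,\mathbf d\cdot\hat{\mathbf x}>\zeta\tau>0$, which is exactly the hypothesis needed to apply Proposition \ref{1prop:gamma}.

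First I would perform the radial integration. Applying \eqref{1eq:gamma} with $\alpha=2$ and $\epsilon=h$ gives
\begin{equation}\notag
\int_0^h r^2 e^{-\mu r}\,\mathrm dr=\frac{\Gamma(3)}{\mu^3}+\mathcal O\!\left(\frac{2}{\Re\mu}e^{-\frac{h}{2}\Re\mu}\right),
\end{equation}
so that, after integrating in the angular variables,
\begin{equation}\notag
\int_{\mathcal C^h}e^{\rho\cdot\mathbf x}\,\mathrm d\mathbf x=\frac{\Gamma(3)}{\tau^3}\int_0^{2\pi}\!\!\int_0^{\theta_0}\frac{\sin\theta}{((\mathbf d+\mathrm i\mathbf d^\perp)\cdot\hat{\mathbf x})^3}\,\mathrm d\theta\,\mathrm d\varphi-R_h,
\end{equation}
where $R_h$ gathers the remainder contributions. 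Using $\Re\mu>\zeta\tau$ together with \eqref{1eq:Ir} and the finiteness of $\int_0^{2\pi}\!\int_0^{\theta_0}\sin\theta\,\mathrm d\theta\,\mathrm d\varphi=2\pi(1-\cos\theta_0)$, the remainder is controlled by $|R_h|\lesssim\frac1\tau e^{-\frac12\zeta h\tau}$, which accounts precisely for the $\mathcal O(\tau^{-1}e^{-\zeta h\tau/2})$ term in \eqref{3eq:eta3}.

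It then remains to bound the $\tau$-independent angular factor from below. I would set
\begin{equation}\notag
A:=\int_0^{2\pi}\!\!\int_0^{\theta_0}\frac{\sin\theta}{((\mathbf d+\mathrm i\mathbf d^\perp)\cdot\hat{\mathbf x})^3}\,\mathrm d\theta\,\mathrm d\varphi
\end{equation}
and show that $|A|$ is a strictly positive number depending only on $\theta_0$ and $\zeta$, whence $\widetilde{C_{\mathcal C^h}}=\Gamma(3)\,|A|$. For the canonical admissible choice $\mathbf d=(0,0,1)^\top$ (which lies in $\mathcal C_\zeta$ whenever $\zeta\le\cos\theta_0$) and $\mathbf d^\perp$ in the equatorial plane, one has $z(\theta,\varphi):=(\mathbf d+\mathrm i\mathbf d^\perp)\cdot\hat{\mathbf x}=\cos\theta+\mathrm i\sin\theta\cos\varphi$, so that $z(\theta,\varphi+\pi)=\overline{z(\theta,\varphi)}$; pairing $\varphi$ with $\varphi+\pi$ forces $A=\int_0^{2\pi}\!\int_0^{\theta_0}\Re(z^{-3})\sin\theta\,\mathrm d\theta\,\mathrm d\varphi$ to be real, and a direct expansion of $\Re(\bar z^3)=\cos^3\theta-3\cos\theta\sin^2\theta\cos^2\varphi$ shows that $A>0$ for every $\theta_0\in(0,\pi/2)$.

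The hard part will be this last step: establishing the strict, \emph{uniform} lower bound on $|A|$. In contrast with the modulus estimate $|z|^{-2}\ge 1/2$ that sufficed in Proposition \ref{3prop:eta}, the cube $z^{-3}$ is not of constant sign, and a crude pointwise bound does not survive the angular integration because of possible phase cancellation; it is the conjugation symmetry $z(\theta,\varphi+\pi)=\overline{z(\theta,\varphi)}$ that removes the imaginary part, after which one must still verify that the surviving real integral never vanishes across the full admissible range of opening angles. This non-cancellation is exactly the content of \cite[Lemma 2.4]{DFLY}. Once $A$ is identified as a positive constant determined by $\theta_0$ and $\zeta$, combining the main term with the remainder bound $|R_h|\lesssim\frac1\tau e^{-\frac12\zeta h\tau}$ immediately yields \eqref{3eq:eta3}.
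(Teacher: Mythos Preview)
The paper does not supply its own proof of this proposition; it simply quotes \cite[Lemma 2.4]{DFLY}. Your outline is correct and follows exactly the template the paper uses for the companion estimates (Propositions \ref{prop:tau2} and \ref{3prop:eta}): pass to spherical coordinates, apply \eqref{1eq:gamma} with $\alpha=2$ to the radial integral to extract the $\Gamma(3)/\tau^3$ main term and an exponentially small remainder, then reduce everything to a $\tau$-independent angular factor $A$.

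Your identification of the hard step is also right, and in fact your specific choice $\mathbf d=(0,0,1)^\top$, $\mathbf d^\perp$ equatorial, makes $A$ explicitly computable. Carrying your reduction one step further: integrating first in $\theta$ (substitute $v=\cos^2\theta$) yields, with $p=\cos^2\theta_0$, $q=\sin^2\theta_0$ and $c=\cos^2\varphi$, the inner value $\dfrac{q(p-cq)}{2(p+cq)^2}$; setting $t=q/p=\tan^2\theta_0$ and using the standard integrals $\int_0^{2\pi}(1+t\cos^2\varphi)^{-1}d\varphi=2\pi(1+t)^{-1/2}$ and its $t$-derivative gives
\[
A=\frac{q}{2p}\int_0^{2\pi}\frac{1-t\cos^2\varphi}{(1+t\cos^2\varphi)^2}\,d\varphi=\frac{q}{2p}\cdot\frac{2\pi}{(1+t)^{3/2}}=\pi\sin^2\theta_0\cos\theta_0>0,
\]
so $\widetilde{C_{\mathcal C^h}}=2\pi\sin^2\theta_0\cos\theta_0$ and the ``non-cancellation'' you flagged is genuinely elementary for this axial choice of $\mathbf d$. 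This completes your argument without needing to invoke \cite{DFLY} as a black box.
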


\begin{proof}[\bf{The proof of Corollary \ref{cor2}}(a)]
	The following integral identity can be obtained according to \eqref{3eq:green}:
	\begin{equation}
	k^2f_j(\mathbf 0)\int_{\mathcal C^h}e^{\rho \cdot \mathbf x}=I_1+I_2+I_3+J_1+J_2,
	\end{equation}
	where $I_m,\ m=1,2,3$, $J_1$ and $J_2$ defined in  \eqref{3eq:chaifen}.
	
	With the help of \eqref{3eq:I11}, \eqref{3eq:I2}, \eqref{3eq:I3}, \eqref{3eq:J1} and Proposition \ref{3prop:eta3}, we have the following integral inequality
	\begin{equation}\label{3eq:fj0}
	\begin{aligned}
	k^2\left[\frac{\widetilde{C_{\mathcal C^h}}}{\tau^3}-\frac{1}{\tau}e^{-\frac{1}{2}\zeta h\tau}\right]&\vert f_j(\mathbf 0)\vert \lesssim j^{-\beta}\left[\left(\frac{1}{\tau^3}+\frac{1}{\tau}e^{-\zeta h\tau}\right)^{\frac{1}{2}}+\left(\frac{1}{\tau^3}+\frac{1}{\tau}e^{-2\zeta h\tau}\right)^\frac{1}{4}\tau^{-\frac{2}{5}}   \right]\\
	&+j^{\gamma}\left[\tau^{-(\alpha+\frac{121}{40})}+\left(\frac{1}{\tau^{\alpha+3}}+\frac{1}{\tau}e^{-\frac{1}{2}\zeta h\tau}\right)\right]\\
	&+\tau^{-\frac{121}{40}}+(1+\tau)(1+\tau^{-\frac{2}{5}})e^{-\zeta h\tau}
	\end{aligned}
	\end{equation}
	as $\tau \to \infty$. For sufficiently large $\tau$, we know that
	$$ \frac{\widetilde{C_{\mathcal C^h}}}{\tau^3}-\frac{1}{\tau}e^{-\frac{1}{2}\zeta h\tau}>0.$$
	Then multiplying $\tau^3$ in the both sides of \eqref{3eq:fj0} and letting $\tau \to \infty$ and $\tau =j^s$, one has
	\begin{equation}\label{3eq:fjalpha}
	k^2\widetilde{C_{\mathcal C^h}}\vert f_j(\mathbf 0)\vert \lesssim j^{-\beta+\frac{37}{20}s}+j^{\gamma-\alpha s}.
	\end{equation}
	 Due to the assumption that $\gamma <\frac{20}{37}\alpha\beta$, we choose $s\in (\gamma/\alpha, \frac{20}{37}\beta)$. By letting $j\to \infty$, we have
	$$\vert f_j(\mathbf 0)\vert =0.$$
	Since $q(\mathbf 0)\neq 1$, the proof of this corollary is complete.
\end{proof}



\section{Visibility and unique recovery results for the inverse scattering problem}\label{sec:inverse}

In this section, we show that when a medium scatter with a conductive transmission  boundary condition possesses either one of a convex planar corner, a convex polyhedral corner, or a convex conic corner, it  radiates a non-trivial far field pattern, namely, the visibility of this scatterer occurs. Furthermore, when the medium scatter is visible, it can be uniquely determined by a single far field measurement under generic physical scenarios.


In the following theorem, it indicates that a conductive medium possesses an aforementioned corner under generic physical conditions always scatters.

\begin{thm}\label{thm:medium conical scat}
	Consider the conductive medium scattering problems \eqref{eq:contr}. Let $(\Omega ; q,\eta )$   be the medium scatterer  associated with \eqref{eq:contr}, where $\Omega$ is a bounded Lipschitz domain with a connected complement in $\mathbb R^n$, $n=2,3$.  If either of the following conditions is fulfilled, namely,
	\begin{itemize}
\item[(a)] when $\Omega\Subset \mathbb R^2$, there exists a sufficient small $h\in \mathbb R_+$ such that $\Omega\cap B_h=S_h$, where $S_h$ is defined by \eqref{1eq:sec}, $q\in  H^2(\overline{S_h} )$, $\eta \in C^\alpha(\overline{\Gamma_h^\pm  } )$ satisfying  $\alpha\in  (0,1)$ and $\eta(\mathbf 0)  \neq 0$, and  $\
\Gamma_h^\pm =\partial S_h\setminus \partial B_h $;
\item[(b)] when $\Omega\Subset \mathbb R^3$,  there exists a sufficient small $h\in \mathbb R_+$ such that $\Omega\cap B_h=\mathcal K^h$, where $\mathcal K^h$ is a cuboid  defined by \eqref{eq:cone3}, $q\in  H^2(\overline{\mathcal K^h} )$,   $\eta \in C^\alpha(\overline{\Gamma_h  } )$ satisfying  $\alpha\in  (0,1)$ and $\eta(\mathbf 0)  \neq 0$, and  $\
\Gamma_h =\partial \mathcal K^h\setminus \partial B_h $;

\item[(c)] when $\Omega\Subset \mathbb R^3$, there exists a sufficient small $h\in \mathbb R_+$ such that $\Omega\cap B_h=\mathcal K^h$, where $\mathcal K^h$ is a polyhedral corner but not a cuboid, then $q \in H^2(\overline{\mathcal K^h})$ satisfying  $q(\mathbf 0)\not =1$  and $\eta \equiv 0$ on $ \partial {\mathcal K^h}\setminus \partial B_h$;

\item[(d)] when $\Omega\Subset \mathbb R^3$,  there exists a sufficient small $h\in \mathbb R_+$ such that $\Omega\cap B_h=\mathcal C^h$, where $\mathcal C^h$ is defined by \eqref{eq:cone1}, $q\in  H^2(\overline{\mathcal C^h} )$,   $\eta \in C^\alpha(\overline{\Gamma_h  } )$ satisfying  $\alpha\in  (0,1)$,  $\eta(\mathbf 0)  \neq 0$, and  $\
\Gamma_h =\partial \mathcal C^h\setminus \partial B_h $;

	\end{itemize}
 then $\Omega$ always scatters for any incident wave satisfying \eqref{eq:incident}.
	\end{thm}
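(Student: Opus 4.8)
The plan is to argue by contradiction, exploiting the connection between non-scattering and the conductive transmission eigenvalue problem \eqref{0eq:tr} that was recorded in the introduction. Suppose that $\Omega$ does not scatter a given incident wave $u^i$, i.e. $u^\infty\equiv 0$. By Rellich's lemma together with the unique continuation principle (here one uses that $\mathbb R^n\setminus\overline{\Omega}$ is connected), the scattered field vanishes in the exterior, $u^s\equiv 0$ in $\mathbb R^n\setminus\overline{\Omega}$, and hence $u^+=u^i$ there.

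Next I would set $v:=u^i|_{\Omega}$ and $w:=u^-|_{\Omega}$ and verify that $(w,v)$ is a pair of full-data conductive transmission eigenfunctions of \eqref{0eq:tr} with $\Gamma=\partial\Omega$ and the same $k$. Indeed $\Delta v+k^2v=0$ in $\Omega$ because $u^i$ is entire, $\Delta w+k^2qw=0$ in $\Omega$ by the first line of \eqref{eq:contr}, and on $\partial\Omega$ the transmission conditions $u^+=u^-$ and $\partial_\nu u^++\eta u^+=\partial_\nu u^-$ become, upon substituting $u^+=u^i=v$ and $u^-=w$, the two identities $w=v$ and $\partial_\nu w=\partial_\nu v+\eta v$, which is precisely \eqref{0eq:tr}. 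Thus the hypotheses of the vanishing results apply to $(w,v)$ in the local corner domain ($S_h$, $\mathcal K^h$, or $\mathcal C^h$).

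I would then invoke the appropriate vanishing theorem, after observing that $v=u^i$, being an entire solution of the Helmholtz equation, is real-analytic and in particular lies in $C^\alpha$ of the closure of the relevant corner domain for every $\alpha\in(0,1)$, so the H\"older regularity required of the eigenfunction is automatic. Concretely: in case (a) (planar sector with $\eta(\mathbf 0)\neq 0$) apply Theorem \ref{2D:delta}; in case (b) (cuboid with $\eta(\mathbf 0)\neq 0$) apply Theorem \ref{3:cubiod}; in case (c) (general polyhedral corner with $\eta\equiv 0$ and $q(\mathbf 0)\neq 1$, i.e. the classical problem \eqref{3eq:treta0}) apply Corollary \ref{thm:poly}(b); and in case (d) (conic corner with $\eta(\mathbf 0)\neq 0$) apply Theorem \ref{thm:3alpha}. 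Each of these forces $v(\mathbf 0)=u^i(\mathbf 0)=0$.

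The final step closes the contradiction: the incident field does not vanish at the corner point $\mathbf 0$ — for a plane wave $u^i(\mathbf x)=e^{\mathrm ik\mathbf d\cdot\mathbf x}$ one has $u^i(\mathbf 0)=1\neq 0$, and more generally the standard probing fields do not vanish there — so $u^i(\mathbf 0)=0$ is impossible and the hypothesis $u^\infty\equiv 0$ must fail; hence $\Omega$ scatters. I expect the main obstacle to be the careful bookkeeping of the second and third steps: confirming that the locally assumed regularity of $v=u^i$, $q$ and $\eta$ genuinely matches the distinct hypotheses of the four vanishing statements, and matching each geometric configuration (a)--(d) with the correct result. A secondary point that must be stated cleanly is the non-vanishing of $u^i$ at the corner, which is exactly the property that turns the vanishing conclusion $u^i(\mathbf 0)=0$ into a genuine contradiction; once these are in place, the Rellich and transmission-system steps are routine.
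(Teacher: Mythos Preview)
Your proposal is correct and follows essentially the same contradiction argument as the paper: assume $u^\infty\equiv 0$, invoke Rellich to obtain the transmission system \eqref{0eq:tr} for $(w,v)=(u^-,u^i)$, and apply the appropriate vanishing theorem in each geometric case to force $u^i(\mathbf 0)=0$, a contradiction. Your matching of cases to results is in fact slightly sharper than the paper's own citation list (you correctly invoke Theorem~\ref{2D:delta} for case~(a), whereas the paper cites Corollary~\ref{cor1}, which treats $\eta\equiv 0$); note, however, that both your argument and the paper's tacitly require $u^i(\mathbf 0)\neq 0$, which is not guaranteed for an arbitrary entire Helmholtz solution, so the phrase ``any incident wave'' should be read as ``any incident wave not vanishing at the corner.''
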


\begin{proof}
	By contradiction, suppose that the mediums scatterer $\Omega$ possesses either one of a convex planar corner, a convex polyhedral corner, and a convex conic corner, where the assumptions (a)-(d) is fulfilled. Assume that $\Omega$ is non-radiating, namely, the far field pattern $u^\infty \equiv 0$. By virtue of Rellich lemma, the total wave field $u$ and incident wave $u^i$ satisfies \eqref{0eq:tr} associated with the incident wave number $k$. It is clear that the incident $u^i$ is $\alpha$-H\"older continuous and non vanishing near the underlying corner. According to Corollaries \ref{cor1} and \ref{thm:poly}, Theorems \ref{thm:3alpha} and  \ref{3:cubiod}, one has $u^i$ must vanish at the corresponding corner  point, where we get the contradiction.
	
	The  proof is complete.
\end{proof}


In the following, we shall study the unique recovery for the inverse problem \eqref{5eq:ineta}  associated with the  conductive scattering problem \eqref{eq:contr} in $\mathbb  R^3$. In the field of inverse scattering problems, it is concerned  with the shape determination  of $\Omega$ by a minimum far-field measurement (cf. \cite{DR2018}). We utilize the local geometrical characterization of transmission eigenfunctions near a corner in  Section \ref{sec:3D} to establish the uniqueness regarding the shape determination of \eqref{5eq:ineta} by a single measurement under generic physical scenario, where a single far-field measurement means that the underlying  far-field pattern is generated only by a single incident wave $u^i$.   The unique determination results of \eqref{5eq:ineta}  for recovering the material parameters associated with \eqref{eq:contr} by  infinitely many far-field measurements  with a fixed  frequency can  be  found in \cite{OX,BHK,HK2020}. We obtain local unique recovery results for the determination of $\Omega$ without a-prior knowledge on the material parameters $q$ and $\eta$ in this section. When $\Omega$ is a cuboid or a corona shape scatterer with a conductive transmission boundary condition, the corresponding global uniqueness results on the shape determination can be drawn under generic physical scenarios. It is pointed out that when $\eta\equiv 0$ on $\partial \Omega$, namely consider the inverse problem \eqref{5eq:ineta} associated with the corresponding scattering problem 
   \begin{equation}\label{polyeq:contr}
   \begin{cases}
   &\Delta u^-+k^2qu^-=0  \hspace*{3.1cm} \mbox{in}\quad \Omega,\\
   &\Delta u^++k^2u^+=0   \hspace*{3.3cm} \mbox{in}\quad \mathbb R^n\setminus \Omega,\\
   &u^+=u^-,\quad \partial_\nu u^+=\partial _\nu u^-, \hspace*{1.7cm} \mbox{on} \quad\partial \Omega,\\
   &u^+=u^i+u^s,\hspace*{3.8cm} \mbox{in}\quad \mathbb R^n,\\
   &\lim_{r\to \infty}r^{(n-1)/2}(\partial _ru^s-iku^s)=0,\hspace*{0.4cm} r=\vert \mathbf x\vert,
   \end{cases}
   \end{equation}	
we can establish global unique recovery results for the shape of $\Omega$ within convex polyhedral or corona shape geometries by a single far-field  measurement, whereas the corresponding single-measurement uniqueness result regarding the shape determination of a convex polygon or cuboid  medium associated with \eqref{polyeq:contr} was studied in \cite{HU&SALO}.

In Theorem \ref{thm:u10}, we show the local uniqueness results for \eqref{5eq:ineta}, which aims to recover  a  scatterer $(\Omega;q,\eta)$ by knowledge of the far-field pattern $u^\infty(\hat{\mathbf x};u^i)$ with a single measurement. First, let us introduce  the admissible class of the conductive scatterer and the related notations in our study.

\begin{defn}\label{def:admiss}
	Let $\Omega$ be a bounded Lipschitz domain in $\mathbb R^3$ with a connected complement and $(\Omega;k,\mathbf d,q,\eta)$ be a conductive scatterer with the incident plane wave $u^i=e^{ik\mathbf x\cdot \mathbf d}$, where $\mathbf d\in \mathbb S^2$ and $k\in \mathbb R_+$. Consider the scattering problem \eqref{eq:contr}. Denote $u$  by the total wave field, which is associated with \eqref{eq:contr}. The scatterer $\Omega$ is said to be admissible if the following conditions are fulfilled:
	\begin{itemize}
		\item[(a)] $q\in L^\infty (\Omega)$ and $\eta\in L^\infty(\partial \Omega)$.
		\item [(b)] After rigid motions, we assume that $\mathbf 0\in \partial \Omega$. Recall that $\mathcal C^h$ and $\mathcal K^h$  are defined in \eqref{eq:cone2} and \eqref{eq:kr0} respectively, where $\mathbf 0$ is the apex of the conic corner $\mathcal C^h$  or the convex polyhedral corner $\mathcal K^h$. If $\Omega$ possesses a  convex conic corner $\mathcal C^h$ (or a cuboid corner $\mathcal K^h$), then $q \in H^2(\overline{\mathcal C^h})$ (or $q \in H^2(\overline{\mathcal K^h})$)  and $\eta\in C^\alpha (\overline {\Gamma_h})$ satisfying $\eta(\mathbf 0)\not =0$  and  $\alpha\in (0,1)$, where  $\Gamma_h= \mathcal C^h\cap \partial \Omega$ (or  $\Gamma_h= \mathcal K^h\cap \partial \Omega$). If $\Omega$ possesses a convex polyhedral corner $\mathcal K^h=B_h\cap \Omega$, then $q \in H^2(\overline{\mathcal K^h})$ satisfying  $q(\mathbf 0)\not =1$  and $\eta \equiv 0$ on $ \overline{\mathcal K^h}\cap \partial \Omega$.
		\item [(c)] The total wave field $u$ is non-vanishing everywhere in the sense that for any $\mathbf x\in \mathbb R^3$,
		\begin{equation}\label{eq:assum 51}
		\lim_{\lambda  \to +0}\frac{1}{m(B(\mathbf x,\rho))}\int_{B(\mathbf x,\lambda )}\vert u(\mathbf x)\vert\mathrm d\mathbf x\not=0,
		\end{equation}
		where $m(B(\mathbf x,\lambda ))$ is the measure of $B(\mathbf x,\lambda )$.
	\end{itemize}
\end{defn}

\begin{rem}\label{rem:51}
	The assumption \eqref{eq:assum 51} is a technical condition for deriving the uniqueness results, which can be fulfilled under generic physical scenarios. For example, when $k\cdot {\rm diam}(\Omega )\ll 1 $, by the well-posedness of the direct scattering problem \eqref{eq:contr} (cf. \cite[Theorem 2.4]{OX}), the condition \eqref{eq:assum 51} can be satisfied. The detailed discussion on this point can be found in \cite[Page 44]{DCL}. We believe that \eqref{eq:assum 51} can be  fulfilled under other physical settings, where we choose not to explore this aspect in this paper and shall investigate it in the future.
\end{rem}

\begin{thm}\label{thm:u10}
	Consider the  conductive scattering problem \eqref{eq:contr}  with  two conductive scatterers  $(\Omega_j;k,\mathbf d,q_j,\eta_j),j=1,2,$ in $  \mathbb R^3$. 
	Let $u^\infty_{j}(\hat{\mathbf {x}};u^i)$ be the far-field pattern associated with the scatterers $(\Omega_j;k,\mathbf d,q_j,\eta_j),j=1,2$ and the incident field $u^i$. If $(\Omega_j;k,\mathbf d, q_j,\eta_j)$ are admissible and
\begin{equation}\label{eq:thm51 cond}
	u^\infty_1(\hat{\mathbf x};u^i)=u^\infty_2(\hat{\mathbf x};u^i)
	\end{equation}
	for all $\hat{\mathbf x}\in \mathbb S^2$  with a fixed incident $u^i$. Then
	\begin{equation}\label{eq:set min}
	\Omega_1\Delta \Omega_2:=(\Omega_1\setminus \Omega_2)\cup(\Omega_2\setminus\Omega_1)
	\end{equation}
	cannot contain a convex conic corner or a cuboid corner. Furthermore, if $\Omega_1$ and $\Omega_2$ are two cuboids, then  $\Omega_1=\Omega_2$.
\end{thm}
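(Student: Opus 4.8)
The plan is to argue by contradiction and to convert the equal--far--field hypothesis into a statement about conductive transmission eigenfunctions, to which the vanishing results of Section \ref{sec:3D} apply. First I would suppose that $\Omega_1\Delta\Omega_2$ contains a convex conic corner or a cuboid corner, with vertex $\mathbf x_0$. The key geometric observation is that such a corner of the symmetric difference must be an \emph{exposed} corner of exactly one of the two scatterers: after possibly relabelling the indices, there is a sufficiently small $h>0$ with $\Omega_1\cap B_h(\mathbf x_0)$ equal to the corner $\mathcal C^h$ (or $\mathcal K^h$) while $\Omega_2\cap B_h(\mathbf x_0)=\emptyset$, and (shrinking $h$ if necessary) $B_h(\mathbf x_0)$ meets $\partial\Omega_1$ only in the corner faces. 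Consequently $B_h(\mathbf x_0)\setminus\overline{\Omega_1}$, being connected to infinity, lies in the unbounded component of $\mathbb R^3\setminus(\overline{\Omega_1}\cup\overline{\Omega_2})$. Since \eqref{eq:thm51 cond} gives equal far--field patterns, Rellich's lemma yields $u_1=u_2$ on that unbounded component, and in particular $u_1^+=u_2$ on $B_h(\mathbf x_0)\setminus\overline{\Omega_1}$.

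With this configuration in hand, I would set $v:=u_2$ and $w:=u_1^-$ on the corner $\Omega_1\cap B_h(\mathbf x_0)$. Because $\Omega_2\cap B_h(\mathbf x_0)=\emptyset$, the coefficient of $\Omega_2$ is identically $1$ there, so $v=u_2$ solves $\Delta v+k^2v=0$ throughout $B_h(\mathbf x_0)$ and is real--analytic; in particular $v\in C^\alpha$ for every $\alpha\in(0,1)$. On the other hand $w=u_1^-$ solves $\Delta w+k^2q_1w=0$ in the corner. Reading off the conductive transmission conditions of \eqref{eq:contr} on $\Gamma_h=\partial\Omega_1\cap B_h(\mathbf x_0)$ and using the trace identity $u_1^+=v$ there, I obtain $w=v$ and $\partial_\nu w=\partial_\nu v+\eta_1 v$ on $\Gamma_h$. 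Hence $(w,v)$ is a pair of conductive transmission eigenfunctions for \eqref{0eq:tr} with potential $q_1$ and boundary parameter $\eta_1$ on the corner. Since $(\Omega_1;k,\mathbf d,q_1,\eta_1)$ is admissible, $q_1\in H^2$, $\eta_1\in C^\alpha$ and $\eta_1(\mathbf 0)\neq 0$, so Theorem \ref{thm:3alpha} (conic corner) or Theorem \ref{3:cubiod} (cuboid corner) applies and forces $v(\mathbf x_0)=0$, i.e. $u_2(\mathbf x_0)=0$. As $u_2$ is continuous at $\mathbf x_0$, the averaged quantity in \eqref{eq:assum 51} equals $|u_2(\mathbf x_0)|=0$, contradicting the non--vanishing admissibility condition (c) for $\Omega_2$. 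This contradiction shows that $\Omega_1\Delta\Omega_2$ contains neither a convex conic nor a cuboid corner.

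For the global statement when $\Omega_1$ and $\Omega_2$ are cuboids, I would argue again by contradiction: if $\Omega_1\neq\Omega_2$ then their support functions differ, so there is a direction $\mathbf p$ with $h_{\Omega_1}(\mathbf p)>h_{\Omega_2}(\mathbf p)$. Since this strict inequality is open in $\mathbf p$ and the set of directions for which the maximum of a cuboid is attained at a unique vertex (i.e. $\mathbf p$ orthogonal to none of its faces) is open and dense, I may choose such a $\mathbf p$ for which $\max_{\mathbf x\in\Omega_1}\mathbf p\cdot\mathbf x$ is attained at a single vertex $\mathbf x_0$. Then $\mathbf p\cdot\mathbf x_0=h_{\Omega_1}(\mathbf p)>h_{\Omega_2}(\mathbf p)\ge\mathbf p\cdot\mathbf y$ for all $\mathbf y\in\overline{\Omega_2}$, so $\mathbf x_0\notin\overline{\Omega_2}$ and $B_h(\mathbf x_0)\cap\Omega_2=\emptyset$ for small $h$. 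As $\mathbf x_0$ is a vertex of the cuboid $\Omega_1$, the set $\Omega_1\cap B_h(\mathbf x_0)$ is a cuboid corner entirely contained in $\Omega_1\setminus\Omega_2\subset\Omega_1\Delta\Omega_2$, contradicting the first part. Hence $\Omega_1=\Omega_2$.

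The main obstacle is the geometric reduction asserting that a convex corner of $\Omega_1\Delta\Omega_2$ is necessarily an exposed corner of one scatterer lying outside the other; once this is secured, verifying that $(w,v)$ solves \eqref{0eq:tr} and invoking the Section \ref{sec:3D} vanishing theorems are routine, and the passage from the pointwise relation $v(\mathbf x_0)=0$ to a contradiction with \eqref{eq:assum 51} is immediate from the real--analyticity of $u_2$ near $\mathbf x_0$.
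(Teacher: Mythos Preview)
Your argument is correct and follows essentially the same route as the paper: assume a corner of one scatterer lies outside the other, use Rellich to identify the exterior fields, read off that $(u_1^-,u_2)$ (with your labelling) forms a conductive transmission eigenpair on the corner, and invoke Theorem~\ref{thm:3alpha} or Theorem~\ref{3:cubiod} to force the analytic eigenfunction to vanish at the apex, contradicting admissibility condition~(c). The paper makes exactly the same WLOG geometric reduction you flag as ``the main obstacle'' without further justification, so your proof is at least as complete; for the cuboid statement you supply a genuine support-function argument where the paper simply appeals to convexity in one sentence.
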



\begin{proof}
 	We prove this theorem by contradiction. Suppose that $\Omega_1\Delta \Omega_2$ contains a convex conic  corner. Without loss of generality, we assume that the underlying convex conic corner $\mathcal C^h \subset {\Omega_2}\setminus {\Omega_1 } $, where  $\mathbf 0\in \partial \Omega_2$ and $\Omega_2\cap B_h=\mathcal C^h$ with a sufficient small $h\in \mathbb R_+$ such that $B_h\subset \mathbb R^3 \setminus \overline{\Omega_1}$.

 Due to \eqref{eq:thm51 cond}, with the help of Rellich's Theorem (cf.\cite{DR}), it holds that $u_1^s=u_2^s$ in $\mathbb R^3\setminus(\overline{\Omega}_1\cup \overline{\Omega}_2)$, we have
 	\begin{equation}\label{eq:u1=u2}
 u_1(\mathbf x)=u_2(\mathbf x),\  \forall  \mathbf x \in \mathbb R^3\setminus(\overline{\Omega}_1\cup \overline{\Omega}_2).
 \end{equation}
 Since $\Gamma_h=\partial \mathcal C^h\cap \partial \Omega_2$,  by  virtue of transmission conditions on $\partial \Omega_2$ of \eqref{eq:contr} and \eqref{eq:u1=u2},  it yields that
 \begin{equation}\label{eq:55 trans}
 u_2^+=u_2^-=u_1^+,\  \partial u_2^-=\partial u_2^++\eta_2 u_2^+=\partial u_1^+ +\eta_2 u_1^+\  \mathrm {on} \ \Gamma_h.
 \end{equation}
 According to \eqref{eq:55 trans} and direct scattering problems \eqref{eq:contr} associated with  $(\Omega_j;k,\mathbf d,q_j,\eta_j)$,   one has
 \begin{equation}\notag 
   \begin{cases}
   &\Delta u_2^-+k^{2}q_2u_2^-=0  \quad \hspace*{2.1cm} \mbox {$\mathrm {in}\  \mathcal C^h$},\\
   &\Delta u_1^++k^{2}u_1^+=0   \quad \hspace*{2.5cm}\mbox {$\mathrm {in}\ \mathcal C^h$},\\
   &u_2^-=u_1^+,\ \partial_{\nu}u_2^-=\partial_{\nu}u_1^++\eta_2 u_1^+ \hspace*{0.5cm}\mbox{$\mathrm {on}\   \Gamma_h$}.
   \end{cases}
   \end{equation}
 By the well-posdeness of the  direct scattering problem \eqref{eq:contr}, it yields that $u_2^-\in H^1( \mathcal C^h)$ and $u_1^+$ is real analytic in $B_h$. By virtue of the condition (b) in Definition \ref{def:admiss}, using Theorem \ref{thm:3alpha}, we know that $u_1(\mathbf 0)=0$,
  which is contradicted  to the admissibility condition (c) in Definition \ref{def:admiss}.

  The first conclusion of  this theorem concerning a cuboid corner can be proved similarly by using Theorem \ref{3:cubiod}. We omit the proof.

By the convexity of two cuboids $\Omega_1$ and $\Omega_2$ and the first conclusion  of this theorem, it is ready to know that  $\Omega_1=\Omega_2$.

 The proof is complete.
 \end{proof}



In the following we introduce an admissible class $\mathcal T$ of corona shape, which shall be used in Theorem \ref{thm:finiteconic}.
\begin{defn}\label{def:cone3}
	Let $D$ be a convex bounded Lipschitz domain with a connected complement $\mathbb R^3\setminus\overline{D}$. If there exit finite many strictly convex conic cones $\mathcal C_{\mathbf x_j,\theta_j}(j=1,2,\dots,\ell,\ell\in \mathbb N)$ defined in \eqref{eq:cone1} such that
	\begin{itemize}
		\item [(a)]
		the apex $\mathbf x_j\in \mathbb R^3\setminus \overline{D}$ and  let $\mathcal C_{\mathbf x_j,\theta_j}^{\ast}=\mathcal C_{\mathbf x_j,\theta_j}\setminus \overline D$ respectively, where the apex $\mathbf x_j$ belongs to the strictly convex bounded conic corner of $\mathcal C_{\mathbf x_j,\theta_j}^{\ast}$;
		\item [(b)]
		$\partial \overline{\mathcal C_{\mathbf x_j,\theta_j}^{\ast}}\setminus \partial \overline{\mathcal C_{\mathbf x_j,\theta_j}}\subset \partial \overline D$ and $\cap _{j=1}^\ell\partial \overline{\mathcal C_{\mathbf x_j,\theta_j}^{\ast}}\setminus \partial \overline{\mathcal C_{\mathbf x_j,\theta_j}}=\emptyset$;
    \item[(c)] $\Omega :=\cup_{j=1}^\ell\mathcal C_{\mathbf x_j,\theta_j} \cup D$ is admissible described by Definition \ref{def:admiss};
	\end{itemize}
	then $\Omega$ is said to belong to an admissible class $\mathcal T$ of corona shape.
\end{defn}


A global unique recovery for the admissible scatter belonging to $\mathcal T$ of corona shape is shown in Theorem \ref{thm:finiteconic}, which can be proved by using Theorem \ref{thm:u10} and the assumptions in Theorem \ref{thm:finiteconic}. Indeed,  the assumptions \eqref{eq:ass 56a} and \eqref{eq:ass 56b} imply that the set difference of two scatters $\Omega_1$ and $\Omega_2$ cannot contain a convex  conic corner if $\Omega_j \in \mathcal T$, $j=1,2$.

\begin{thm}\label{thm:finiteconic}
	Suppose that $\Omega_{m},m=1,2$  belong to the admissible class $\mathcal T$ of corona shape, where
	$$
	\Omega_m=\cup_{j^{(m)}=1}^{\ell^{(m)}}\mathcal C_{\mathbf x_{j^{(m)}},\theta_{j^{(m)}}}\cup D_m,\quad m=1,2.
	$$
	Consider the conductive scattering problem \eqref{eq:contr} associated with the admissible conductive scatterers  $\Omega_{m},m=1,2$. Let $u_j^\infty(\hat{\mathbf x};u^i)$ be the far-field pattern associated with the scatterers $(\Omega_m;\mathcal C_{\mathbf x_{j^m},\theta_{j^m}}),m=1,2$ and the incident field $u^i$. If the following conditions:
	\begin{subequations}
		\begin{align}
			 D_1&=D_2, \label{eq:ass 56a} \\
			 \theta_{i^{(1)}}&=\theta_{j^{(2)}} \   \mbox{for} \  i^{(1)} \in \{1,\ldots,\ell^{(1)}\}\  \mbox{and}  \  j^{(2)}\in \{1,\ldots,\ell^{(2)}
		\}\ \mbox{when} \ \mathbf x_{i^{(1)}}=\mathbf x_{j^{(2)}},  \label{eq:ass 56b}
		\end{align}
	\end{subequations}
	and \eqref{eq:thm51 cond} are satisfied,
	then $\ell^{(1)}=\ell^{(2)},\ \mathbf x_{j^{(1)}}=\mathbf x_{j^{(2)}}$ and $\theta_{j^{(1)}}=\theta_{j^{(2)}}$, where $j^{(m)}=1,\dots \ell^{(m)}$, $m=1,2$. Namely, one has $\Omega_1=\Omega_2$. 	
\end{thm}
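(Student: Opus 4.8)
The plan is to argue by contradiction and to reduce the global statement to the local obstruction already recorded in Theorem \ref{thm:u10}. Assume $\Omega_1 \ne \Omega_2$. Since both scatterers are admissible and share the same far-field pattern by \eqref{eq:thm51 cond}, Theorem \ref{thm:u10} asserts that the symmetric difference $\Omega_1 \Delta \Omega_2$ can contain no convex conic corner. The whole proof therefore amounts to showing that, under the corona structure of Definition \ref{def:cone3} together with \eqref{eq:ass 56a}--\eqref{eq:ass 56b}, the inequality $\Omega_1\ne\Omega_2$ forces a convex conic corner into $\Omega_1\Delta\Omega_2$, which is the desired contradiction.

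First I would carry out a combinatorial reduction. Writing $D:=D_1=D_2$ by \eqref{eq:ass 56a}, each $\Omega_m$ is the union of $D$ with the truncated spikes $\mathcal C^{\ast}_{\mathbf x_{j^{(m)}},\theta_{j^{(m)}}}$. Hypothesis \eqref{eq:ass 56b} guarantees that any apex common to the two collections carries the same opening angle, so the corresponding spikes coincide; hence $\Omega_1=\Omega_2$ holds precisely when the two apex--angle collections agree. Thus $\Omega_1\ne\Omega_2$ produces a \emph{distinguishing apex}, say (after possibly interchanging the roles of $\Omega_1,\Omega_2$) a point $\mathbf x_{\ast}\in\{\mathbf x_{j^{(1)}}\}$ that is not an apex of any spike of $\Omega_2$.

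The key step is then a careful choice of this apex together with a localization argument. I would select $\mathbf x_{\ast}$ to be a distinguishing apex maximizing the distance $\mathrm{dist}(\cdot,D)$. Fixing $h>0$ small enough that $B_h(\mathbf x_{\ast})\cap\overline D=\emptyset$, the admissibility of $\Omega_1$ (Definition \ref{def:admiss}(b)) guarantees that $\Omega_1\cap B_h(\mathbf x_{\ast})=\mathcal C_{\mathbf x_{\ast},\theta_{\ast}}\cap B_h(\mathbf x_{\ast})$ is a genuine convex conic corner. It remains to show that $B_h(\mathbf x_{\ast})\cap\overline{\Omega_2}=\emptyset$ for $h$ small, for then $\Omega_1\cap B_h(\mathbf x_{\ast})\subset\Omega_1\setminus\Omega_2\subset\Omega_1\Delta\Omega_2$ and the conic corner sits inside the symmetric difference. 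Suppose instead $\mathbf x_{\ast}\in\overline{\Omega_2}$; since $\mathbf x_{\ast}\notin\overline D$ it must lie in the closure of some spike $\mathcal C^{\ast}_{\mathbf y,\phi}$ of $\Omega_2$ whose tip $\mathbf y$ is the unique point of that spike farthest from $D$. The apex $\mathbf y$ is itself distinguishing: were it a shared apex, $\mathcal C^{\ast}_{\mathbf y,\phi}$ would also be a spike of $\Omega_1$ meeting $B_h(\mathbf x_{\ast})$, contradicting the clean conic-corner description of $\Omega_1$ near $\mathbf x_{\ast}$. Strict convexity of the cone and convexity of $D$ then give $\mathrm{dist}(\mathbf y,D)>\mathrm{dist}(\mathbf x_{\ast},D)$ (equality is impossible because the tip is the unique farthest point and $\mathbf y\ne\mathbf x_{\ast}$), contradicting the maximality in the choice of $\mathbf x_{\ast}$. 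Hence $\mathbf x_{\ast}\notin\overline{\Omega_2}$, and the localization closes the contradiction via Theorem \ref{thm:u10}. Having ruled out distinguishing apexes, the apex--angle collections of $\Omega_1$ and $\Omega_2$ coincide, whence $\ell^{(1)}=\ell^{(2)}$, $\mathbf x_{j^{(1)}}=\mathbf x_{j^{(2)}}$, $\theta_{j^{(1)}}=\theta_{j^{(2)}}$, and $\Omega_1=\Omega_2$.

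The main obstacle is precisely this geometric localization: ensuring that a distinguishing apex of one corona does not lie in the interior, or on a smooth boundary portion, of the other corona, configurations in which the symmetric difference near the apex need not be a convex conic corner. My resolution is the extremal selection of $\mathbf x_{\ast}$ as a farthest distinguishing apex, which, through strict convexity of the spikes and the disjointness of their bases built into Definition \ref{def:cone3}, forces $\mathbf x_{\ast}$ into the exterior of $\Omega_2$. Making the elementary fact that a strictly convex spike attains its maximal distance to $D$ uniquely at its tip fully rigorous is the technical crux I would need to verify with care.
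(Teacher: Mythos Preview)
Your approach is essentially the same as the paper's. The paper does not give a detailed proof of this theorem; it only states (just before the theorem) that the result ``can be proved by using Theorem~\ref{thm:u10} and the assumptions in Theorem~\ref{thm:finiteconic}'', noting that \eqref{eq:ass 56a}--\eqref{eq:ass 56b} force a conic corner into $\Omega_1\Delta\Omega_2$ whenever $\Omega_1\neq\Omega_2$. Your contradiction scheme, reduction to a distinguishing apex, and appeal to Theorem~\ref{thm:u10} are exactly this strategy, with the extremal choice of $\mathbf x_\ast$ (farthest distinguishing apex from $D$) supplying the geometric detail the paper omits. Your identification of the technical crux---that the tip of a strictly convex spike over a convex base $D$ is the (unique) point of the spike farthest from $D$---is accurate; the weak inequality $\mathrm{dist}(\mathbf y,D)\geq\mathrm{dist}(\mathbf x_\ast,D)$ follows from convexity of $\mathrm{dist}(\cdot,D)$ along rays from $\mathbf y$ that terminate on $\partial D$, and strictness is what needs a short additional argument (or can be sidestepped by iterating among the finitely many maximal distinguishing apexes).
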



In Theorem \ref{thm:polycon}, we first show a local uniqueness result regarding a polyhedral corner  by a single measurement, where we can prove this theorem in a similar manner  as for Theorem \ref{thm:u10} by utilizing Corollary \ref{thm:poly}. Hence the detailed proof of Theorem \ref{thm:polycon} is omitted. we emphasize that  an admissible  convex polyhedral scatterer $\Omega$ can be uniquely determined by a single far-field measurement, which a global uniqueness result for \eqref{5eq:ineta} associated with \eqref{eq:contr} is established.

\begin{thm}\label{thm:polycon}
   Consider the conductive scattering problem \eqref{eq:contr} with conductive scatterers $(\Omega_j;k,\mathbf d,q_j,\eta_j),j=1,2,$ in  $\mathbb R^3$.
   Let $u^\infty_{j}(\hat{\mathbf {x}};u^i)$ be the far-field pattern associated with the scatterers $(\Omega_j;k,\mathbf d,q_j,\eta_j),j=1,2$ and the incident field $u^i$. If $(\Omega_j;k,\mathbf d,q_j,\eta_j)$ are admissible and \eqref{eq:thm51 cond} is fulfilled, then
   $ \Omega_1\Delta\Omega_2$ defined by \eqref{eq:set min}
   cannot contains a convex polyhedral corner. Furthermore, if $\Omega_1$ and $\Omega_2$ are two admissible convex polyhedrons, then
  $$\Omega_1=\Omega_2.$$
\end{thm}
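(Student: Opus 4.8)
The plan is to argue by contradiction and reduce the local statement to the spectral vanishing result for the classical transmission eigenvalue problem established in Corollary \ref{thm:poly}, following exactly the template of the proof of Theorem \ref{thm:u10}. Suppose $\Omega_1\Delta\Omega_2$ were to contain a convex polyhedral corner. Without loss of generality I would place this corner so that, after a rigid motion, $\mathbf 0\in\partial\Omega_2$, $\Omega_2\cap B_h=\mathcal K^h$ is the truncated polyhedral corner of \eqref{eq:kr0}, and $h$ is chosen small enough that $B_h\subset\mathbb R^3\setminus\overline{\Omega_1}$; that is, the corner belongs to $\Omega_2\setminus\Omega_1$. The admissibility of $\Omega_2$ then forces, via Definition \ref{def:admiss}(b), that $q_2\in H^2(\overline{\mathcal K^h})$, $q_2(\mathbf 0)\neq 1$ and $\eta_2\equiv 0$ on $\overline{\mathcal K^h}\cap\partial\Omega_2$.

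Next I would extract the local transmission system. From the coincidence of far-field patterns \eqref{eq:thm51 cond} and Rellich's theorem one obtains $u_1^s=u_2^s$ in $\mathbb R^3\setminus(\overline{\Omega}_1\cup\overline{\Omega}_2)$, hence $u_1=u_2$ there. Because $\eta_2\equiv 0$ on $\Gamma_h:=\partial\mathcal K^h\cap\partial\Omega_2$, the conductive transmission conditions on $\partial\Omega_2$ degenerate to the classical ones, and combining them with $u_1^+=u_2^+$ on $\Gamma_h$ yields
\begin{equation}\notag
\begin{cases}
\Delta u_2^-+k^2q_2u_2^-=0 & \mbox{in}\ \mathcal K^h,\\
\Delta u_1^++k^2u_1^+=0 & \mbox{in}\ \mathcal K^h,\\
u_2^-=u_1^+,\ \partial_\nu u_2^-=\partial_\nu u_1^+ & \mbox{on}\ \Gamma_h,
\end{cases}
\end{equation}
which is precisely the classical transmission eigenvalue problem \eqref{3eq:treta0} on $\mathcal K^h$. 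By the well-posedness of the direct problem \eqref{eq:contr} one has $u_2^-\in H^1(\mathcal K^h)$, while $u_1^+$ solves the Helmholtz equation in the whole ball $B_h\subset\mathbb R^3\setminus\overline{\Omega_1}$ and is therefore real analytic there; in particular $u_1^+\in C^\alpha(\overline{\mathcal K^h})$.

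With this setup the pair $(v,w)=(u_1^+|_{\mathcal K^h},u_2^-|_{\mathcal K^h})$ satisfies the hypotheses of Corollary \ref{thm:poly}(b): $q_2\in H^2$, $q_2(\mathbf 0)\neq 1$, and $v\in C^\alpha$. The corollary then gives $u_1(\mathbf 0)=u_1^+(\mathbf 0)=v(\mathbf 0)=0$, which contradicts the admissibility condition (c) in Definition \ref{def:admiss}, namely the non-vanishing of the total field $u_1$ at every point. This contradiction proves that $\Omega_1\Delta\Omega_2$ cannot contain a convex polyhedral corner. For the global conclusion, if $\Omega_1$ and $\Omega_2$ are two admissible convex polyhedrons with $\Omega_1\neq\Omega_2$, then a convexity argument shows that $\Omega_1\Delta\Omega_2$ must expose a vertex, i.e.\ a convex polyhedral corner, of one of the two polyhedrons, contradicting the local statement just proved; hence $\Omega_1=\Omega_2$.

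The main obstacle I anticipate is not the spectral input (that is packaged in Corollary \ref{thm:poly}) but the careful geometric bookkeeping needed to guarantee that the chosen corner genuinely lies in $\Omega_2\setminus\Omega_1$ with $B_h$ disjoint from $\overline{\Omega_1}$, so that $u_1^+$ is honestly analytic up to and including the apex, and that the degeneration $\eta_2\equiv 0$ is consistent with the polyhedral branch of the admissibility definition. A secondary delicate point is the final convexity step: one must argue that two distinct convex polyhedra sharing the same far field necessarily produce a polyhedral corner of one of them in the symmetric difference, which requires ruling out pathological boundary overlaps rather than relying on a mere dimension count.
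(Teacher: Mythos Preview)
Your proposal is correct and follows essentially the same approach the paper indicates: argue by contradiction exactly as in the proof of Theorem \ref{thm:u10}, but with the polyhedral admissibility branch ($\eta_2\equiv 0$, $q_2(\mathbf 0)\neq 1$) feeding into Corollary \ref{thm:poly}(b) in place of Theorem \ref{thm:3alpha}, and conclude the global statement by convexity. The paper in fact omits the detailed proof for precisely this reason, pointing only to the template of Theorem \ref{thm:u10} together with Corollary \ref{thm:poly}.
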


	Consider the direct scattering problem \eqref{polyeq:contr} associated with a convex polyhedron medium $(\Omega;k,\mathbf d,q)$, which is a special case of \eqref{eq:contr} by letting $\eta \equiv 0$ on $\partial \Omega$. In Corollary \ref{cor:53 noeta}, we give a global unique determination for a convex polyhedron $\Omega$ associated with the direct scattering problem \eqref{polyeq:contr}  by a single far-field measurement under generic physical settings. Corollary \ref{cor:53 noeta} can be proved directly by using Theorem \ref{thm:polycon} and the detailed proof is omitted. Compared with the corresponding uniqueness result in \cite{HU&SALO} for the shape determination of a  cuboid scatterer by a  single measurement,  we relax the geometrical restriction on the uniqueness determination regarding medium shapes  by a single measurement  from a cuboid to  a  general convex polyhedron.
	
	
	\begin{cor}\label{cor:53 noeta}
		Consider the  scattering problem \eqref{polyeq:contr} with  scatterers $(\Omega_j;k,\mathbf d,q_j),j=1,2,$ in  $\mathbb R^3$.
   Let $u^\infty_{j}(\hat{\mathbf {x}};u^i)$ be the far-field pattern associated with the scatterers $(\Omega_j;k,\mathbf d,q_j),j=1,2$ and the incident field $u^i$. Assume that the total wave field $u_j$ corresponding to  \eqref{polyeq:contr}  associated with $(\Omega_j;k,\mathbf d,q_j)$ $(j=1,2)$ satisfies \eqref{eq:assum 51}. Suppose that  $\Omega_j$ is a convex polyhedron, $j=1,2$. Denote $\mathcal V(\Omega_j )$ by a set composed by all vertexes of $\Omega_j$ with  $j=1,2$. For any $\mathbf x_{c,j} \in \mathcal V(\Omega_j )$, if there exists sufficient small $h\in\mathbb R_+$ such that $q_j\in H^2(\overline{ \mathcal  K_{\mathbf x_{c,j} }^h } )$ with $q_j(\mathbf x_{c,j} ) \neq 1$ for $j=1,2$,  where $\mathcal  K_{\mathbf x_{c,j} }^h=\Omega \cap  B_h( \mathbf x_{c,j})\Subset \Omega_j$ , then the condition \eqref{eq:thm51 cond} implies that $\Omega_1=\Omega_2.$
	\end{cor}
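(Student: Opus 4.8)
The plan is to deduce the statement directly from Theorem \ref{thm:polycon}, so that the entire task reduces to checking that the two scatterers $(\Omega_j;k,\mathbf d,q_j)$ fit the admissibility framework of Definition \ref{def:admiss} in the special regime $\eta\equiv 0$. First I would observe that \eqref{polyeq:contr} is precisely \eqref{eq:contr} with $\eta\equiv 0$ on $\partial\Omega$, so that the relevant branch of the admissibility condition (b) is the convex-polyhedral-corner branch, which asks for exactly $q\in H^2$ near the vertex, $q(\mathbf 0)\neq 1$, and $\eta\equiv 0$ on the adjacent boundary piece. This is the key bookkeeping point: in the $\eta\equiv 0$ setting the polyhedral branch of Definition \ref{def:admiss} does not require any conductive parameter to be nonvanishing, only the potential condition $q(\mathbf 0)\neq 1$.

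Next I would verify admissibility vertex by vertex. Since $\Omega_j$ is a convex polyhedron, each vertex $\mathbf x_{c,j}\in\mathcal V(\Omega_j)$ is the apex of a strictly convex polyhedral cone, so for all sufficiently small $h$ one has $\Omega_j\cap B_h(\mathbf x_{c,j})=\mathcal K^h_{\mathbf x_{c,j}}$ of the form \eqref{eq:kr0}. The hypotheses $q_j\in H^2(\overline{\mathcal K_{\mathbf x_{c,j}}^h})$ with $q_j(\mathbf x_{c,j})\neq 1$ together with $\eta_j\equiv 0$ are then exactly condition (b); the assumed non-vanishing property \eqref{eq:assum 51} of the total field $u_j$ is condition (c); and $q_j\in L^\infty(\Omega_j)$ together with $\eta_j\equiv 0\in L^\infty(\partial\Omega_j)$ gives condition (a). Hence each $\Omega_j$ is an admissible convex polyhedron in the sense of Definition \ref{def:admiss}, and with \eqref{eq:thm51 cond} in force Theorem \ref{thm:polycon} applies to yield $\Omega_1=\Omega_2$.

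For orientation I would also recall the mechanism inside Theorem \ref{thm:polycon}, since it is where the real content sits. Arguing by contradiction, if $\Omega_1\Delta\Omega_2$ contained a convex polyhedral corner $\mathcal K^h$, say $\mathcal K^h\subset\Omega_2\setminus\overline{\Omega_1}$ with $B_h\subset\mathbb R^3\setminus\overline{\Omega_1}$, then Rellich's theorem together with \eqref{eq:thm51 cond} gives $u_1=u_2$ in the unbounded component, and the transmission conditions of \eqref{polyeq:contr} on $\partial\Omega_2$ show that $(u_2^-,u_1^+)$ solves the classical transmission eigenvalue problem \eqref{3eq:treta0} locally at the apex, with $u_1^+$ real analytic near $\mathbf 0$ and hence H\"older continuous. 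Corollary \ref{thm:poly}(b), applied with $q_2\in H^2$ and $q_2(\mathbf 0)\neq 1$, then forces $u_1(\mathbf 0)=0$, contradicting \eqref{eq:assum 51}.

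The main obstacle, and the only place where genuine geometric input beyond the spectral vanishing is needed, is the passage from ``$\Omega_1\Delta\Omega_2$ contains no convex polyhedral corner'' to the global equality $\Omega_1=\Omega_2$. The point is that two distinct convex polyhedra must exhibit a convex polyhedral corner in their symmetric difference: one locates a vertex of $\Omega_1$ or $\Omega_2$ lying in the complement of the other and checks, using convexity of both bodies, that a full convex polyhedral corner of $\Omega_1\Delta\Omega_2$ is pinned there. Once this geometric fact is secured, the impossibility of such a corner established above forces $\Omega_1=\Omega_2$, which completes the proof; the contribution of the present corollary over \cite{HU&SALO} is precisely that this argument now runs for an arbitrary convex polyhedron rather than only a cuboid, because the underlying vanishing result Corollary \ref{thm:poly} holds at a general polyhedral corner.
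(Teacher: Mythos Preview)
Your proposal is correct and follows exactly the approach the paper indicates: the paper states that Corollary \ref{cor:53 noeta} ``can be proved directly by using Theorem \ref{thm:polycon} and the detailed proof is omitted,'' and you have filled in precisely the admissibility verification needed to invoke that theorem in the $\eta\equiv 0$ regime. Your additional sketch of the mechanism behind Theorem \ref{thm:polycon} and the geometric observation about convex polyhedra is helpful elaboration but not a departure from the paper's route.
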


When  the shape  of an admissible  scatter $\Omega$ is uniquely determined by a single measurement, under a-prior knowledge the potential $q$ associated with $\Omega$ we can recover the surface parameter $\eta$ by  a single measurement   provided that $\eta$ is a non-zero constant. We can  use a similar argument for proving \cite[Theorem 4.2]{DCL} to establish  Theorem \ref{eta0}. The detailed  proof is omitted. The technical condition \eqref{eq:assum k} can be easily fulfilled under generic physical scenarios; see the detailed discussion in \cite[Remark 4.2]{DCL}.


\begin{thm}\label{eta0}
	Consider the conductive scattering problem \eqref{eq:contr} with the admissible conductive scatterers $(\Omega_m;k,\mathbf d,q,\eta_m)$ in $\mathbb R^3$, where  $\eta_m\not =0$, $m=1,2$,  are two constants.  Let $u_m^\infty(\hat{\mathbf x};u^i)$ be the far-field pattern with the scatterers $(\Omega_m;k,\mathbf d,q,\eta_m),m=1,2$ and the incident field $u^i$. Suppose that
	$$u_1^\infty(\hat{\mathbf x};u^i)=u_2^\infty(\hat{\mathbf x};u^i),\ for\ all\ \hat{\mathbf x}\in \mathbb S^2$$
	with a fixed incident wave $u^i$. If
	\begin{equation}\label{eq:assum k}
		\mbox{ $k$ is not an eigenvalue of the partial differential operator $\Delta +k^2q$, }
	\end{equation}
	and $\Omega_m$ is a cuboid ($m=1,2$), we have $\eta_1=\eta _2$. Similarly, when
	$$
	\Omega_m=\cup_{j^{(m)}=1}^{\ell^{(m)}}\mathcal C_{\mathbf x_{j^{(m)}},\theta_{j^{(m)}}}\cup D_m \in \mathcal T,\quad m=1,2,
	$$
if  the conditions \eqref{eq:assum k},  \eqref{eq:ass 56a} and \eqref{eq:ass 56b} are fulfilled, one has $\eta_1=\eta _2$.

\end{thm}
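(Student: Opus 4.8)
The plan is to decouple the argument into two stages: first recover the common shape $\Omega:=\Omega_1=\Omega_2$, and then, with the shape and the shared potential $q$ held fixed, isolate the boundary parameter from the matched Cauchy data. For the shape-recovery stage the hypotheses are exactly those under which the shape is determined by a single measurement. In the cuboid case, since both $\Omega_m$ are admissible cuboids and \eqref{eq:thm51 cond} holds, Theorem \ref{thm:u10} yields $\Omega_1=\Omega_2$; in the corona case, since $\Omega_m\in\mathcal T$ and the compatibility conditions \eqref{eq:ass 56a}--\eqref{eq:ass 56b} hold together with \eqref{eq:thm51 cond}, Theorem \ref{thm:finiteconic} yields $\Omega_1=\Omega_2$. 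Write $\Omega$ for this common domain with boundary $\partial\Omega$.

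For the second stage I would invoke Rellich's lemma: because $u_1^\infty=u_2^\infty$ and $\Omega_1=\Omega_2=\Omega$, the scattered fields agree in $\mathbb R^3\setminus\overline{\Omega}$, so the exterior total fields and their normal derivatives coincide on $\partial\Omega$,
\[
u_1^+=u_2^+=:u^+,\qquad \partial_\nu u_1^+=\partial_\nu u_2^+ \quad \mbox{on } \partial\Omega.
\]
The transmission conditions $u_m^+=u_m^-$ on $\partial\Omega$ then force $u_1^-=u_2^-$ on $\partial\Omega$. Setting $w:=u_1^--u_2^-$ in $\Omega$ and using that both $u_m^-$ solve the \emph{same} interior equation $\Delta u_m^-+k^2qu_m^-=0$ (the potential $q$ is shared), one gets $\Delta w+k^2qw=0$ in $\Omega$ with $w=0$ on $\partial\Omega$. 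Under assumption \eqref{eq:assum k} the homogeneous interior Dirichlet problem for $\Delta+k^2q$ has only the trivial solution, so $w\equiv0$, i.e.\ $u_1^-=u_2^-$ throughout $\Omega$; in particular $\partial_\nu u_1^-=\partial_\nu u_2^-$ on $\partial\Omega$.

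Finally I would subtract the two conductive transmission conditions $\partial_\nu u_m^-=\partial_\nu u_m^++\eta_m u_m^+$. Using $\partial_\nu u_1^-=\partial_\nu u_2^-$, $\partial_\nu u_1^+=\partial_\nu u_2^+$ and $u_1^+=u_2^+=u^+$, every derivative term cancels and one is left with $(\eta_1-\eta_2)u^+=0$ on $\partial\Omega$. Since $\eta_1,\eta_2$ are constants, it remains only to rule out $u^+\equiv0$ on $\partial\Omega$: this is where the admissibility condition (c) of Definition \ref{def:admiss}, namely the non-vanishing property \eqref{eq:assum 51} of the total field, enters, guaranteeing that $u=u^+$ on $\partial\Omega$ cannot vanish identically, whence $\eta_1=\eta_2$. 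The corona case is handled identically once $\Omega_1=\Omega_2$ is secured by Theorem \ref{thm:finiteconic}. The main obstacle is precisely this non-degeneracy of $u^+$ on $\partial\Omega$, which is delegated to the non-vanishing hypothesis; the complementary point demanding care is that the interior uniqueness step genuinely requires both scatterers to share the same $q$ together with the non-eigenvalue assumption \eqref{eq:assum k}, without which $w$ need not vanish.
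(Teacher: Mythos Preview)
Your proposal is correct and follows exactly the standard line of argument the paper intends: the paper omits the detailed proof and refers to \cite[Theorem 4.2]{DCL}, whose structure is precisely what you outline---first invoke the shape-uniqueness theorems (Theorem \ref{thm:u10} or Theorem \ref{thm:finiteconic}) to get $\Omega_1=\Omega_2$, then use Rellich's lemma together with the shared potential $q$ and the non-eigenvalue assumption \eqref{eq:assum k} to force $u_1^-\equiv u_2^-$ via the interior Dirichlet problem, and finally subtract the conductive conditions and appeal to the non-vanishing property \eqref{eq:assum 51} to conclude $\eta_1=\eta_2$. Nothing is missing.
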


\section*{Acknowledgements}
The work of H. Diao is supported by a startup fund from Jilin University and NSFC/RGC Joint Research Grant No. 12161160314. The work of H. Liu is supported by the Hong Kong RGC General Research Funds (projects 12302919, 12301420 and 11300821) and the NSFC/RGC Joint Research Fund (project N\_CityU101/21).

	\end{document}